\newtheorem{theorem}{Theorem}[section]
\newtheorem{proposition}[theorem]{Proposition}
\newtheorem{corollary}[theorem]{Corollary}
\newtheorem{lemma}[theorem]{Lemma}
\newtheorem{definition}[theorem]{Definition}
\newtheorem{remark}[theorem]{Remark}
\newtheorem{remarks}[theorem]{Remarks}
\newtheorem{example}[theorem]{Example}
\numberwithin{equation}{section} 
\newcommand{\cqfd}{\hfill{\small $\Box$}} 
 \newenvironment{proof}[1][]{{\bf Proof #1 : }}{\hfill \cqfd} 
\newcommand{\Id}{\mbox{Id}} 
\newcommand{\id}{\mbox{id}} 
\newcommand{\cn}{\mathrm{cn}} 
\newcommand{\pcn}{\mathrm{pcn}}
\newcommand{\gr}{\mathscr{G}}
\newcommand{\go}{\mathscr{G} ^{(0)}}
\newcommand{\hr}{\mathscr{H}}
\newcommand{\ho}{\mathscr{H} ^{(0)}}
\def\intx{\overset{\:\circ}{X}}
\newcommand\tgt[1]{{}^{T}\kern-1pt #1}
\newcommand\adi[1]{{}^{ad}\kern-1pt #1}
\newcommand{\im}{\mathop{\mathrm{im}}\nolimits}
  \def\CC{{\mathbb{C}}}
 \def\KK{{\mathbb{K}}} 
 \def\NN{{\mathbb{N}}} 
  \def\RR{{\mathbb{R}}}
 \def\ZZ{{\mathbb{Z}}}
  \def\bR{{\mathbf{R}}}
 \def\bZ{{\mathbf{Z}}}
\def\cA{{\mathcal{A}}}  \def\cC{{\mathcal{C}}}
  \def\cF{{\mathcal{F}}}
 \def\cH{{\mathcal{H}}} 
 \def\cK{{\mathcal{K}}} 
\def\cP{{\mathcal{P}}}  
  \def\cU{{\mathcal{U}}}
\title{Geometric obstructions for Fredholm boundary conditions for manifolds with corners\footnote{Both authors were partially supported by ANR-14-CE25-0012-01.} \footnote{An important part of this work started during a stay of the first author at the Max Planck Institut for Mathematics at Bonn. This author wishes to express his gratitude to this institution for the excellent working conditions.}}
\author{P. Carrillo Rouse and J.M. Lescure} 
\date{\today} 
\begin{document}

\maketitle 

\begin{center}
{\bf Abstract}
\end{center}

%\noindent Given a smooth compact manifold $X$ the algebra of compact operators $\cK(X)$ on $L^2(X)$ has K-theory groups $K_0(\cK(X))=\bZ$ and $K_1(\cK(X))=0$. 

\noindent For every connected manifold with corners we use a homology theory called conormal homology, defined in terms of faces and orientation of their conormal bundle, and whose cycles correspond geometrically to corner's cycles. Its Euler characteristic (over the rationals, dimension of the total even space minus the dimension of the total odd space), $\chi_{\cn}:=\chi_0-\chi_1$, is given by the alternated sum of the number of (open) faces of a given codimension.

\noindent The main result of the present paper is that for a compact connected manifold with corners $X$ given as a finite product of manifolds with corners of codimension less or equal to three we have that\\ 
1) If $X$ satisfies the Fredholm Perturbation property (every elliptic pseudodifferential b-operator on $X$ can be perturbed by a b-regularizing operator so it becomes Fredholm) then the even Euler corner character of $X$ vanishes, {\it i.e. $\chi_0(X)=0$}.\\
2) If the even Periodic conormal homology group vanishes, {\it i.e. $H_0^{\pcn}(X)=0$,} then $X$ satisfies the stably homotopic Fredholm Perturbation property (i.e. every elliptic pseudodifferential b-operator on $X$ satisfies the same named property up to stable homotopy among elliptic operators).\\
3) If $H_0^{\pcn}(X)$ is torsion free and if the even Euler corner character of $X$ vanishes, {\it i.e. $\chi_0(X)=0$} then $X$ satisfies the stably homotopic Fredholm Perturbation property. For example for every finite product of manifolds with corners of codimension at most two the conormal homology groups are torsion free.

\noindent The main theorem behind the above result is the explicit computation in terms of conormal homology of the $K-$theory groups of the algebra $\cK_b(X)$ of $b$-compact operators for $X$ as above. Our computation unifies the  known cases of codimension zero (smooth manifolds) and of codimension one (smooth manifolds with boundary).

\tableofcontents

\section{Introduction}

In a smooth compact manifold the vanishing of the Fredholm (Analytic) index of an elliptic (=Fredholm in this case) pseudodifferential operator is equivalent to the invertibility, up to perturabtion by a regularizing operator, of the operator. In the case of a smooth manifold with smooth boundary, not every elliptic (b-operator) totally  characteristic pseudodifferential operator is Fredholm but it can be endowed with Fredholm boundary conditions, that is it can be perturbed with a regularizing operator to become Fredholm. This non trivial fact, which goes back to Atiyah, Patodi and Singer \cite{APS1}, can also be obtained from the vanishing of a boundary analytic index (see \cite{MontNis} or below for more details). In fact, in this case the boundary analytic index takes values in the $K_0$-theory group of the algebra of regularizing operators and this K-theory group is easily  seen to vanish. Now, the case of manifolds with corners of codimension at least 2 (this includes for instance many useful domains in Euclidean spaces)  is not so well understood\footnote{We will mention some previous works at the end of this introduction}. In this paper we will show that the global topology/geometry of the corners and the way the corners form cycles enter in a fundamental way in a primary obstruction to give Fredholm boundary  conditions. As we will see the answer passes by the computation of some $K$-theory groups. We explain now with more details the problem and the content of this paper.

Using K-theoretical tools for solving index problems was the main asset in the series of papers by Atiyah-Singer (\cite{AS,AS3}) in which they introduce and prove several index formulas for smooth compact manifolds. For the case of a manifold with boundary, Atiyah-Patodi-Singer used different tools in \cite{APS1} to give a formula for the Fredholm index of a Dirac type operator with the so called APS boundary condition. It is without mentioning the importance of these results in modern mathematics. Still, besides several very interesting examples (mainly of codimension 2) and higher/more general versions of the two cases above, not too much is known in general for manifolds with corners or for other kind of manifolds with singularities. Putting an appropriate $K$-theory setting has been part of the problem for several years.

%%%%%%%%%%%%%%%%%%%%%%%%%%%%%%%%%%%%%%%%%%%%%%%%%%%%%%%%%%%%%%%%%%%%%%%%%%%%%%%%%%%%%%%%%%%%%%%%%%%%%%%%%%%%%%%%%
%%%%%%%%%%%%%%%%%%%%%%%%%%%%%%%%%%%%%%%%%%%%%%%%%%%%%%%%%%%%%%%%%%%%%%%%%%%%%%%%%%%%%%%%%%%%%%%%%%%%%%%%%%%%%%
In \cite{Mel}, Melrose\footnote{See Melrose and Piazza paper \cite{MelPia} for complete details in the case with corners} constructs an algebra of pseudodifferential operators $\Psi_b^*(X)$ associated to any manifold with corners\footnote{In this paper we will always assume $X$ to be connected} $X$. The elements in this algebra are called $b-$pseudodifferential operators\footnote{To simplify we discuss only the case of scalar operators, the passage to operators acting on sections of vector bundles is done in the classic way.}, the subscript $b$ identifies these operators as obtained by "microlocalization" of the Lie algebra of $C^\infty$ vector fields on $X$ tangent to the boundary. This Lie algebra of vector fields can be explicitly obtained as sections of the so called $b$-tangent bunlde $^bTX$ (compressed tangent bundle that we will recall below).  The b-pseudodifferential calculus developed by Melrose has the classic and expected properties. In particular there is a principal symbol map
$$\sigma_b:\Psi_b^m(X)\to S^{[m]}(^bT^*X).$$  
Ellipticity has the usual meaning, namely invertibility of the principal symbol. Moreover (discussion below theorem 2.15 in \cite{MelPia}), an operator is elliptic if and only\footnote{Notice that this remark implies that to a $b$-pseudodifferential operator one can associate an "index" in the algebraic K-theory group $K_0(\Psi_b^{-\infty}(X))$ (classic construction of quasi-inverses).} if it has a quasi-inverse modulo $\Psi_b^{-\infty}(X)$. Now,  $\Psi_b^{-\infty}(X)$ contains compact operators, but also noncompact ones (as soon as $\partial X\not=\emptyset$), and compacity is there characterized by the vanishing of a suitable indicial map (p.8 ref.cit.). Elliptic $b$-pseudodifferential operators being invertible modulo compact operators -and hence Fredholm\footnote{see p.8 in \cite{MelPia} for a characterization of Fredholm operators in terms of an indicial map or \cite{Loya} thm 2.3 for the proof of Fully ellipticity iff Fredholm}-, are usually said to be  {\sl fully} elliptic. 

Now, by the property of the $b$-calculus, $\Psi_b^{0}(X)$  is included in the algebra of bounded operators on $L^2(X)$, where the $L^2$ structure is provided by some $b$-metric in the interior of $X$. We denote by $\cK_b(X)$ the norm completion of the subalgebra $\Psi_b^{-\infty}(X)$.  This $C^*$-algebra fits in a short exact sequence of $C^*$-algebras of the form
\begin{equation}\label{Introbcompact}
\xymatrix{
0\ar[r]&\cK(X)\ar[r]^-{i_0}&\cK_b(X)\ar[r]^-{r}&\cK_b(\partial X)\ar[r]&0
}
\end{equation}
where $\cK(X) $ is the algebra of compact operators in $L^2(X)$. In order to study Fredholm problems and analytic index problems one has to understand the $K$-theory of the above short exact sequence. 

To better explain how these K-theory groups enter into the study of Fredholm Perturbation properties and in order to enounce our first main results we need to settle some definitions.

{\bf Analytic and Boundary analytic Index morphism:} Given an elliptic $b$-pseudodifferential $D$, the classic construction of parametrices adapts to give a $K-$theory valued index in $K_0(\cK_b(X))$ that only depends on its principal symbol class $\sigma_b(D)\in K^0_{top}(^bT^*X)$. In more precise terms, the short exact sequence 
\begin{equation}\label{IntrobKses}
\xymatrix{
0\ar[r]&\cK_b(X)\ar[r]&\overline{\Psi_b^0(X)}\ar[r]^-{\sigma_b}&C(^bS^*X)\ar[r]&0
}
\end{equation}
gives rise to $K-$theory index morphism 
$K_1(C(^bS^*X))\to K_0(\cK_b(X))$ that factors in a canonical way by
an index morphism
\begin{equation}
\xymatrix{
K^0_{top}(^bT^*X)\ar[r]^-{Ind^a_X}&K_0(\cK_b(X))
}
\end{equation}
called {\it the Analytic Index morphism of $X$}. By composing the Analytic index with the morphism induced by the restriction to the boundary we have a morphism
\begin{equation}
\xymatrix{
K^0_{top}(^bT^*X)\ar[r]^-{Ind^\partial_X}&K_0(\cK_b(\partial X))
}
\end{equation}
called {\it the Boundary analytic index morphism of $X$}. In fact $r:K_0(\cK_b(X))\to K_0(\cK_b(\partial X))$ is an isomorphism if $\partial X\not=\emptyset$, Proposition \ref{thmbcompact}, and so the two indices above are essentially the same. In other words we completely understand the six term short exact sequence in K-theory associated to the sequence (\ref{Introbcompact}). Notice that in particular there is no contribution of the Fredholm index in the $K_0$-analytic index. 

To state the next theorem we need to define the Fredholm Perturbation Property and its stably homotopic version.

\begin{definition} Let $D\in \Psi_b^m(X)$ be elliptic. We say that $D$ satisfies: 
\begin{itemize}
\item  the {\it Fredholm Perturbation Property} $(\cF\cP)$ if there is   $R\in \Psi_b^{-\infty}(X)$ such that $D+R$ is  fully elliptic.  
\item the {\it stably homotopic Fredholm Perturbation Property} $(\cH\cF\cP)$ if there is a fully elliptic operator $D'$ with $[\sigma_b(D')]=[\sigma_b(D)]\in K_0(C^*({}^bTX))$.
\end{itemize}
\end{definition}
We also say that $X$ satisfies the   {\it (resp. stably homotopic) Fredholm Perturbation Property}  if any elliptic $b$-operator on $X$ satisfies the Fredholm property $(\cF\cP)$ (resp. $(\cH\cF\cP)$).

Property  $(\cF\cP)$ is of course stronger than property $(\cH\cF\cP)$.  In \cite{NisGauge}, Nistor characterized $(\cF\cP)$ in terms of the vanishing of an index in some particular cases. In  \cite{NSS2}, Nazaikinskii, Savin and Sternin characterized $(\cH\cF\cP)$
 for arbitrary manifolds with corners using an index map associated with their dual manifold construction. We now rephrase the result of \cite{NSS2} and we give a new proof in terms of deformation groupoids. 
\begin{theorem}\label{AnavsFredthm1intro}
Let $D$ be an elliptic $b$-pseudodifferential operator on a compact manifold with corners $X$. Then  $D$ satisfies $(\cH\cF\cP)$ if and only if 
\(
 Ind_\partial([\sigma_b(D)])=0
\).\\
In particular if $D$ satisfies $(\cF\cP)$ then its boundary analytic index vanishes. 
\end{theorem}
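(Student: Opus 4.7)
The strategy is to leverage Proposition \ref{thmbcompact} (the isomorphism $r_*\colon K_0(\cK_b(X))\xrightarrow{\sim} K_0(\cK_b(\partial X))$) to reduce the statement to a vanishing condition for the full analytic index, and then to construct an explicit fully elliptic representative using a deformation (adiabatic) groupoid argument. The reduction is immediate: by Proposition \ref{thmbcompact}, the six-term exact sequence attached to (\ref{Introbcompact}) forces $(i_0)_*\colon K_0(\cK(X))\to K_0(\cK_b(X))$ to vanish. Since $Ind_\partial=r_*\circ Ind^a_X$, the kernels of $Ind_\partial$ and $Ind^a_X$ coincide, so the theorem amounts to
\[
Ind^a_X([\sigma_b(D)])=0\iff D\ \text{satisfies}\ (\cH\cF\cP).
\]

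For the direct implication, suppose $D'$ is fully elliptic with $[\sigma_b(D')]=[\sigma_b(D)]$ in $K_0(C^*({}^bTX))$. Since the analytic index is defined at this level of $K$-theory, $Ind^a_X([\sigma_b(D)])=Ind^a_X([\sigma_b(D')])$. Full ellipticity means $D'$ admits a parametrix modulo $\cK(X)$, so its analytic index lies in the image of $(i_0)_*$, which is zero. The ``in particular'' statement follows at once: if $D$ has $(\cF\cP)$ with witness $D+R$, then $D+R$ itself is a fully elliptic representative of the same symbol class, so $(\cF\cP)\Rightarrow(\cH\cF\cP)$ and therefore $Ind_\partial([\sigma_b(D)])=0$.

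For the converse I would invoke the adiabatic groupoid $\adi{\cG}$ of the $b$-Lie groupoid integrating ${}^bTX$, which gives a short exact sequence of $C^*$-algebras
\[
0\longrightarrow C^*({}^bX)\otimes C_0((0,1])\longrightarrow C^*(\adi{\cG})\xrightarrow{\;e_0\;} C^*({}^bTX)\longrightarrow 0,
\]
whose ideal has vanishing $K$-theory. The analytic index is then $Ind^a_X=(e_1)_*\circ(e_0)_*^{-1}$, so $Ind^a_X([\sigma_b(D)])=0$ means that $[\sigma_b(D)]$ lifts to a class in $K_0(C^*(\adi{\cG}))$ evaluating to $0$ at $t=1$. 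Identifying this groupoid-theoretic index with the $K$-theoretic connecting map of the pseudodifferential symbol extension $0\to\cK_b(X)\to\overline{\Psi_b^0(X)}\to C({}^bS^*X)\to 0$ (via the canonical isomorphism $K_0(C^*({}^bTX))\cong K_1(C({}^bS^*X))$), the vanishing means that $[\sigma_b(D)]$ lifts to a class in $K_1(\overline{\Psi_b^0(X)})$. A representative is an invertible element $U$ in a matrix algebra over the unitization $\overline{\Psi_b^0(X)}^{\sim}$; any such $U$ is automatically Fredholm. A density/approximation argument within the $b$-calculus then replaces $U$ by a genuine $b$-pseudodifferential operator $D'$ of order zero with the same class in $K_0(C^*({}^bTX))$. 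This $D'$ is fully elliptic and witnesses $(\cH\cF\cP)$ for $D$.

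The main obstacle I expect is making precise the equality between the two incarnations of the analytic index $-$ the deformation-groupoid connecting morphism coming from $\adi{\cG}$ and the $K$-theoretic connecting map of the symbol extension $-$ together with the final step that upgrades the $C^*$-algebraic invertible $U$ to a true $b$-pseudodifferential operator without disturbing its class in $K_0(C^*({}^bTX))$. These comparisons are precisely where the deformation-groupoid approach carries content; everything else is a formal consequence of Proposition \ref{thmbcompact}.
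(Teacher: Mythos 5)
Your argument is correct in substance, but it takes a genuinely different route from the paper's. The paper deduces the theorem almost immediately from Proposition \ref{FullyAPS} --- the stable homotopy classification $\mathrm{FE}(X)\simeq K_0(C^*(T_{nc}X))$ of fully elliptic operators, imported from \cite{Savin2005}, \cite{DLR} and \cite{NSS2} --- combined with the exact sequence \eqref{ex-seq:Fred-tan-partialb}: a symbol class is realized by a fully elliptic operator precisely when it lies in the image of $K_0(C^*(\Gamma_b^{Fred}(X)))$ in $K_0(C^*(\Gamma_b^{tan}(X)))\simeq K_0(C^*({}^bTX))$, which by exactness is $\ker r_\partial=\ker Ind^\partial_X$. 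You bypass Proposition \ref{FullyAPS} entirely and instead combine Proposition \ref{thmbcompact} (to identify $\ker Ind^\partial_X$ with $\ker Ind^a_X$ and to kill $(i_0)_*$), the six-term sequence of the symbol extension \eqref{bKses}, and Loya's criterion (Theorem \ref{FELL=Fred}) together with density of the smooth calculus in $\overline{\Psi_b^0(X)}$ to manufacture the fully elliptic representative by hand. This buys a more self-contained and elementary proof, at the cost of a forward reference to Proposition \ref{thmbcompact} (no circularity: that proposition is proved by induction on the filtration, independently of the present theorem). Two points in your write-up need repair, though neither is fatal. First, $K_0(C^*({}^bTX))$ and $K_1(C({}^bS^*X))$ are \emph{not} canonically isomorphic: the map $\delta$ of \eqref{bTS} is in general neither injective nor surjective. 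What you actually need, and what the paper supplies via the mapping-cone argument of Section \ref{subsectionAnalyticindexmorphism}, is the factorization $I=Ind^a_X\circ\delta$; since $[\sigma_b(D)]=\delta([\sigma_b(D)]_1)$, the hypothesis $Ind^a_X([\sigma_b(D)])=0$ gives $I([\sigma_b(D)]_1)=0$ and hence, by exactness, the desired lift of $[\sigma_b(D)]_1$ to $K_1(\overline{\Psi_b^0(X)})$. Second, the approximation step should be phrased so that $D'$ is a matrix over $\Psi_b^0(X)$ invertible in the $C^*$-closure, hence invertible on $(L^2)^n$ by spectral permanence, hence Fredholm and therefore fully elliptic by Theorem \ref{FELL=Fred}; its principal symbol is norm-close to that of $U$, hence homotopic to it through invertibles, so $[\sigma_b(D')]_1=[\sigma_b(D)]_1$ and applying $\delta$ gives $[\sigma_b(D')]=[\sigma_b(D)]$ in $K_0(C^*({}^bTX))$, as required by the definition of $(\cH\cF\cP)$. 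With these clarifications your proof is complete.
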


The above results fit exactly with the K-theory vs Index theory Atiyah-Singer's program and in that sense it is not completely unexpected. Now, in order to give a full characterization of the Fredholm perturbation property one is first led to compute or understand the $K$-theory groups for the algebras (\ref{Introbcompact}) preferably in terms of the geometry/topology of the manifold with corners. As it happens, the only previously known cases are: 
\begin{itemize}
\item the $K$-theory of the compact operators $\cK(X)$, giving $K_0(\cK(X))=\bZ$ and $K_1(\cK(X))=0$, which is of course essential for classic index theory purposes; 
\item the $K$-theory of $\cK_b(X)$ for a smooth manifold with boundary, giving $K_0(\cK_b(X))=0$ and $K_1(\cK_b(X))=\bZ^{1-p}$ with $p$ the number of boundary components, which has the non trivial consequence that any elliptic $b$-operator on a manifold with boundary can be endowed with Fredholm boundary conditions.
\end{itemize}

{\bf Computation of the K-theory groups in terms of corner cycles.} In this paper we explicitly compute the above K-theory groups for any finite product of manifolds with corners of codimension $\leq 3$ in terms corner cycles (explanation below). Our computations and results are based on a geometric interpretation of the algebras of $b$-pseudodifferential operators in terms of Lie groupoids. We explain and recall the basic facts on groupoids and the b-pseudodifferential calculus in the first two sections. Besides being extremely useful to compute $K-$theory groups, the groupoid approach we propose reveals to be very powerful to compute index morphisms and relate several indices. Indeed, the relation between the different indices for manifolds with corners was only partially understood for some examples. Let us explain this in detail. Let $X$ be a manifold with corners. Let $F_p=F_p(X)$ be the set of (without boundary, connected) faces of $X$ of codimension $p$. To compute $K_*(\cK_b(X))$, we use an increasing filtration of $X$ given by the open subspaces: 
\begin{equation}
    X_p = \bigcup_{k\le p\ ;\ f\in F_k} f.
\end{equation}
We have $X_0=\overset{\circ}{X}$ and $X_d=X$. We extend if necessary the filtration over $\ZZ$ by setting $X_k=\emptyset$ if $k<0$ and $X_k=X$ if $k>d$. The  $C^*$-algebra of $\cK_b(X)$ inherits (for entire details see section \ref{secK_b}) an increasing filtration by $C^*$-ideals:
\begin{equation}\label{eq:K-filt}
   \cK(L^2(\overset{\circ}{X}))=A_0\subset A_1 \subset \ldots\ A_d=A=\cK_b(X).
\end{equation}
The spectral sequence $(E^*_{*,*}(\cK_b(X)),d^*_{*,*})$ associated with this filtration can be used, in principle, to have a better understanding of these K-theory groups.  This filtration was also considered  by Melrose and Nistor in \cite{MelNis} and their main theorem is the expression of the first differential (theorem 9 ref.cit.). In trying to figure out an expression for the differentials of this spectral sequence in all degrees, we found a differential $\ZZ$-module $(\cC(X),\delta^\pcn)$ constructed in a very simple way out of the set of open connected faces of the given manifold with (embedded) corners $X$. Roughly speaking, the $\ZZ$-module $\cC(X)$ is generated by open connected faces provided with a co-orientation (that is, an orientation of their conormal bundles in $X$), while the differential map $\delta^\pcn$ associates to a given co-oriented face of codimension $p$, the sum of co-oriented faces of codimension $p-2k-1$, $k\ge 0$, containing it in their closures. This gives a well defined differential module for two reasons. The first one is that once a labelling of the boundary hyperfaces is chosen, the co-orientation of a given face induces co-orientations of the faces containing it in their closures, proving that the module map $\delta^\pcn$ is well defined. The second one is that the jumps by $2k+1$, $k\ge 0$, in the codimension guarantee the relation $\delta^\pcn\circ \delta^\pcn=0$. 
We call {\sl periodic conormal homology} and denote it by $H^{\pcn}(X)$ the homology of $(\cC(X),\delta^\pcn)$. Note that it is $\ZZ_2$-graded by sorting faces by even/odd codimension. 

Actually, it happens that the differential $\delta^\pcn$ retracts onto the simpler differential map $\delta$ where one stops at $-1$  in the codimension, that is,   $\delta$ maps a given co-oriented face of codimension $p$ to the sum of co-oriented faces of codimension $p-1$ containing it in their closures. We call {\sl conormal homology} and denote it by $H^{\cn}(X)$ the homology of $(\cC(X),\delta)$. Note that it is $\ZZ$-graded by sorting faces by codimension and that the resulting $\ZZ_2$-grading coincides with the periodic conormal groups. For full details about these homological facts see Sections \ref{seccnhom} and \ref{appendix}.

The conormal $\ZZ$-graded complex $(\cC_*(X),\delta)$ first appears in the work of Bunke \cite{Bunke} where it is used to compute obstructions for tamings of Dirac operators on manifolds with corners, and it also implicitely appears in the work of Melrose and Nistor in \cite{MelNis}, through the quasi-isomorphism that we prove here (Corollary \ref{cordiff1}). We can conclude this remark by recording that there is a natural isomorphism 
\begin{equation}
   H_{p}^{\cn}(X) \simeq E^2_{p,0}(\cK_b(X)).
\end{equation}
Our main K-theory computation can now be stated (theorem \ref{thmPCHvsKth}):

\begin{theorem}\label{thmPCHvsKthintro}
 Let $X=\Pi_iX_i$ be a finite product of manifolds with corners of codimension less or equal to three. There are natural isomorphisms 
\begin{equation}
\xymatrix{
H_{\mathrm0}^{\mathrm{pcn}}(X)\otimes_\mathbb{Z}\mathbb{Q}\ar[r]^-{\phi_X}_-\cong & K_0(\cK_b(X)))\otimes_\mathbb{Z}\mathbb{Q}
}
\text{ and }\qquad 
\xymatrix{
H_{\mathrm1}^{\mathrm{pcn}}(X)\otimes_\mathbb{Z}\mathbb{Q}\ar[r]^-{\phi_X}_-\cong & K_1(\cK_b(X))\otimes_\mathbb{Z}\mathbb{Q}.
}
\end{equation}
In the case $X$ contains a factor of codimension at most two or $X$ is of codimension three the result holds even without tensoring by $\mathbb{Q}$.
\end{theorem}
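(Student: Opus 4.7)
The plan is to analyse the spectral sequence $(E^r_{*,*}(\cK_b(X)),d^r)$ attached to the $C^*$-algebra filtration (\ref{eq:K-filt}) and to read $K_*(\cK_b(X))$ off as its abutment, using the already mentioned isomorphism $E^2_{p,0}(\cK_b(X))\simeq H^{\cn}_p(X)$ to recognise the $E^2$-page. Since K-theory is $2$-periodic, the full $E^2$ page is a pair of copies of $H^{\cn}_*(X)$; sorting its groups by the parity of $p+q$ produces exactly the two groups $H^{\pcn}_0(X)$ and $H^{\pcn}_1(X)$. The comparison morphism $\phi_X$ is built functorially: to a co-oriented face $f$ of codimension $p$ one assigns the K-theory class of degree $p$ represented by the Bott--Thom generator of its conormal bundle inside the subquotient $A_p/A_{p-1}$, which is then pushed into $K_*(\cK_b(X))$ through the tower (\ref{eq:K-filt}).

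First I would settle the single-factor case of codimension at most three. Here the filtration has length at most four, so every differential $d^r$ with $r\ge 4$ vanishes for dimensional reasons, leaving only $d^1$, $d^2$ and $d^3$. By \cite{MelNis} and Corollary \ref{cordiff1}, $d^1$ is the conormal boundary $\delta$, so $E^2\simeq H^{\cn}_*(X)$. The higher differentials $d^2$ and $d^3$ are supplementary maps that lower codimension by $2k+1$ with $k=1$ and $k=2$ respectively, and the central claim is that they recover precisely the odd-jump components of the periodic differential $\delta^{\pcn}$ that are absent from $\delta$. Granting this, $E^\infty\simeq H^{\pcn}_*(X)$. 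Over $\QQ$ the extension problem from $E^\infty$ to $K_*$ is trivial; integrally it can still be resolved when $H^{\pcn}_0(X)$ is torsion-free (codimension at most two, cf.\ Section \ref{seccnhom}), or when $X$ is of pure codimension three, by direct inspection of the six-term exact sequences built into the filtration.

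For a finite product $X=\prod_i X_i$ of factors of codimension at most three, a Künneth argument on both sides finishes the job. Faces of $X$ are products of faces of the $X_i$ with compatible co-orientations, so the periodic conormal complex decomposes as a tensor product of differential modules, $(\cC(X),\delta^{\pcn})\simeq\bigotimes_i(\cC(X_i),\delta^{\pcn})$; this yields a rational Künneth formula for $H^{\pcn}_*$, and an integral one whenever at least one factor has torsion-free $H^{\pcn}$. On the analytic side, $\cK_b(X)$ is naturally the (completed) tensor product of the $\cK_b(X_i)$ already at the level of the underlying $b$-groupoids, and the subquotients of its filtration are continuous-trace, so the standard $KK$-theoretic Künneth theorem furnishes a rational Künneth formula for $K_*(\cK_b(X))$. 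Naturality of $\phi$ under external products then reduces the product case to the single-factor computation treated above.

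The main obstacle I foresee is the explicit identification of the higher differentials $d^2$ and $d^3$ on the K-theory side with the combinatorial odd-jump components of $\delta^{\pcn}$. Unlike $d^1$, which is geometric and is already supplied by \cite{MelNis}, these higher differentials are defined only through iterated connecting maps in the six-term exact sequences attached to the pairs $A_{p-r}\subset A_p$, and their reduction to face-incidence data requires a careful choice of representatives, systematic tracking of boundary maps through successive subquotients, and exploitation of the continuous-trace structure on each stratum. Once this identification is in place, the other ingredients---Künneth on both sides and the torsion-freeness statements recorded for codimension at most two---combine in the standard way to produce the theorem.
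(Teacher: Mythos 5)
Your strategy---run the spectral sequence of the filtration (\ref{eq:K-filt}) all the way to convergence, identify $E^2$ with conormal homology, and then solve the extension problem---is not the paper's route, and it has a genuine gap exactly where you flag the ``main obstacle''. The identification of the higher differentials with the odd-jump components of $\delta^{\pcn}$ is nowhere established: the paper computes only $d^1$ (Theorem \ref{thmdiff1}, Corollary \ref{cordiff1}) and explicitly records the computation of the $d^{2r+1}_{p,0}$ as an interesting open problem. Granting that identification is therefore assuming the hardest missing ingredient rather than deferring a routine verification. There is also a parity slip in your count: $d^2$ vanishes automatically because $E^r_{p,q}=0$ for odd $q$ and $d^2$ shifts $q$ by $1$; it is not an ``odd-jump'' map lowering codimension by $2k+1$ with $k=1$. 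The first possibly nontrivial higher differential is $d^3$, corresponding to the jump $k=1$, and it is precisely this map that your argument needs and does not supply.

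The paper circumvents this entirely. For a single factor of codimension $\le 3$ it never goes past the $E^1/d^1$ level: it compares the six-term exact sequences of the pairs $A_0\subset A_1$, $A_0\subset A_2$ and $A_2\subset A_3$ directly with the corresponding long exact sequences in conormal homology, using commutative diagrams of ideals (e.g.\ factoring $K_1(A_2/A_0)\to K_0(A_0)$ through $K_1(A_1/A_0)$, and $K_1(A_3/A_2)\to K_0(A_2)\hookrightarrow K_0(A_2/A_1)$) so that every connecting homomorphism that appears is reduced to one already computed in Theorem \ref{thmdiff1}. These comparisons, together with Proposition \ref{thmbcompact}, the freeness statements of Theorem \ref{freePCN}, and the Five Lemma, produce the isomorphisms without ever touching $d^3$. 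Your reduction of the product case to the single-factor case via K\"unneth on both sides does match the paper (nuclearity of $\cK_b(X_i)$ on the analytic side, Propositions \ref{Kunneth} and \ref{KunnethQ} on the combinatorial side), but the single-factor case itself, as you present it, rests on an unproved---and in the paper deliberately avoided---computation of the higher differentials.
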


%By theorem \ref{freePCN} the above result hold even without tensoring by $\mathbb{Q}$ for $X$ a finite product of manifolds with corners of codimension less or equal to two.

We insist on the fact that (periodic) conormal homology  groups are easily computable, for the underliyng chain complexes as well as the differentials maps are obtained from elementary and explicit ingredients. To continue let us introduce the Corner characters.
 
%A $p$-corner cycle will be as usual an element of $\ker \delta_p$. The module $\ker \delta_p$ is a free submodule of $\mathbb{Z}^{\# F_p}$. We let
%\begin{equation}\label{kappadef1intro}
%\kappa_p(X):=rank(\ker \delta_p) \,\, and \,\, \iota_p(X):=rank(\im \delta_p).
%\end{equation}
%In particular $\kappa_p(X)+\iota_p(X)=\# F_p$. These integers are obviously corner invariants and they allow to introduce the following characters:
\begin{definition}[Corner characters]\label{cornercharactersintro}
Let $X$ be a manifold with corners. We define the even conormal character of $X$ as the finite sum
\begin{equation}
\chi_0(X)=dim_\mathbb{Q}\,H_{0}^{\pcn}(X)\otimes_\mathbb{Z}\mathbb{Q}.
\end{equation}
Similarly, we define the odd conormal character of $X$ as the finite sum
\begin{equation}
\chi_1(X)=dim_\mathbb{Q}\,H_{1}^{\pcn}(X)\otimes_\mathbb{Z}\mathbb{Q}.
\end{equation}
\end{definition}

We can consider as well
\begin{equation}
\chi(X)=\chi_0(X)-\chi_1(X),
\end{equation}	
then by definition
\begin{equation}
\chi(X)=1-\# F_1+\# F_2-\cdots +(-1)^d\#F_d
\end{equation}
We refer to the integer $\chi(X)$ as the Euler corner character of $X$.

In particular one can rewrite the theorem above to have, for $X$ as in the statement,

\begin{equation}\label{tableKthprodintro}
{\large 
\xymatrix{
&K_0(\cK_b(X))\otimes_\mathbb{Z}\mathbb{Q}\cong\mathbb{Q}^{\chi_0(X)}&\\
&K_1(\cK_b(X))\otimes_\mathbb{Z}\mathbb{Q}\cong\mathbb{Q}^{\chi_1(X)}&
}
}
\end{equation}
and, in terms of the corner character,
\begin{equation}
{\large \chi(X)=rank(K_0(\cK_b(X))\otimes_\mathbb{Z}\mathbb{Q})-rank(K_1(\cK_b(X))\otimes_\mathbb{Z}\mathbb{Q}).}
\end{equation}
Or in the case $X$ is a finite product of manifolds with corners of codimension at most 2  we even have
 \begin{equation}\label{tableKth}
K_0(\cK_b(X))\simeq \mathbb{Z}^{\chi_0(X)} \qquad \text{ and } \qquad 
  K_1(\cK_b(X))\simeq \mathbb{Z}^{\chi_1(X)}
\end{equation}
and also 
\(
 \chi_\cn(X)=rank(K_0(\cK_b(X)))-rank(K_1(\cK_b(X)))
\).

\vspace{1mm}

We can finally state the following primary obstruction Fredholm Perturbation theorem (theorem \ref{thmFPcornercycles}) in terms of corner's characters and corner's cycles.

\begin{theorem}\label{thmFPcornercyclesintro}
Let $X$ be a compact manifold with corners of codimension greater or equal to one. If $X$ is a finite product of manifolds with corners of codimension less or equal to three we have that 
\begin{enumerate}
\item If $X$ satisfies the Fredholm Perturbation property then the even Euler corner character of $X$ vanishes, {\it i.e. $\chi_0(X)=0$}.
\item If the even Periodic conormal homology group vanishes, {\it i.e. $H_0^{\pcn}(X)=0$} then $X$ satisfies the stably homotopic Fredholm Perturbation property.
\item If $H_0^{\pcn}(X)$ is torsion free and if the even Euler corner character of $X$ vanishes, {\it i.e. $\chi_0(X)=0$} then $X$ satisfies the stably homotopic Fredholm Perturbation property.
\end{enumerate}
\end{theorem}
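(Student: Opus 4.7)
The strategy is to combine the analytic reduction provided by Theorem \ref{AnavsFredthm1intro} with the $K$-theoretic computation of Theorem \ref{thmPCHvsKthintro}. Throughout, I identify the analytic index $Ind^a_X$ with the boundary analytic index $Ind_\partial$ via the isomorphism $r\colon K_0(\cK_b(X))\xrightarrow{\cong}K_0(\cK_b(\partial X))$, so that vanishing of one is equivalent to vanishing of the other.

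For (1), I would argue as follows. Assume $X$ satisfies $(\cF\cP)$. Theorem \ref{AnavsFredthm1intro} immediately yields $Ind_\partial([\sigma_b(D)])=0$ for every elliptic $b$-operator $D$. Standard properties of the $b$-calculus (principal symbol construction plus stabilization) show that every class in $K^0_{top}({}^bT^*X)$ is a $\ZZ$-linear combination of symbol classes of elliptic $b$-operators, hence $Ind_\partial\equiv 0$ as a homomorphism and therefore $Ind^a_X\equiv 0$. The next step is to observe that the rational isomorphism $\phi_X$ of Theorem \ref{thmPCHvsKthintro} factors through the analytic index, i.e.\ $\im(\phi_X)\subseteq\im(Ind^a_X\otimes\QQ)$. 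This is natural because each generator of $H_0^{\pcn}(X)$ comes from a co-oriented corner cycle and can be realized as the symbol class of an explicit elliptic $b$-operator associated with that cycle. Combining this with $Ind^a_X\equiv 0$ forces $H_0^{\pcn}(X)\otimes_\ZZ\QQ=0$, i.e.\ $\chi_0(X)=0$.

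I would next dispatch (3) as an immediate consequence of (2): under the hypothesis that $X$ is a finite product of manifolds with corners, the $\ZZ$-module $\cC(X)$ is finitely generated, so $H_0^{\pcn}(X)$ is a finitely generated abelian group; being torsion-free of vanishing rank $\chi_0(X)=0$, it must equal zero. For (2) itself, assume $H_0^{\pcn}(X)=0$. When $X$ contains a codim-$\le 2$ factor or is itself of codimension three, Theorem \ref{thmPCHvsKthintro} yields an integral isomorphism, so $K_0(\cK_b(X))=0$, hence $Ind^a_X\equiv 0$, and every elliptic $b$-operator satisfies $(\cH\cF\cP)$ by Theorem \ref{AnavsFredthm1intro}. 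In the remaining cases (products of at least two codim-$3$ factors) only a rational isomorphism is available; one still deduces that $Ind^a_X$ is trivial by invoking the same compatibility as in step~(1): the image of $Ind^a_X$ is contained in the image of $\phi_X$, which is zero as soon as $H_0^{\pcn}(X)=0$. Again Theorem \ref{AnavsFredthm1intro} yields $(\cH\cF\cP)$.

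The main obstacle I expect is precisely this compatibility between the analytic index and the natural map $\phi_X$ coming from the spectral sequence associated with the filtration (\ref{eq:K-filt}). Concretely, one has to exhibit, for each generator of $H_0^{\pcn}(X)$ (i.e.\ each equivalence class of co-oriented corner cycle), an explicit elliptic $b$-pseudodifferential operator whose analytic index realizes $\phi_X$ applied to that generator, and conversely to control the image of $Ind^a_X$ by that of $\phi_X$. Once this geometric/analytic compatibility is established---essentially by tracing $Ind^a_X$ through the spectral sequence that computes $K_*(\cK_b(X))$ from conormal homology in Theorem \ref{thmPCHvsKthintro}---the three conclusions drop out of the two theorems already recorded in the excerpt.
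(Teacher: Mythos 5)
Your handling of items (2) and (3) essentially matches the paper: (3) reduces to (2) because a finitely generated torsion-free abelian group of rank zero is zero, and (2) follows from Theorems \ref{thmPCHvsKthintro} and \ref{AnavsFredthm1intro} once $K_0(\cK_b(X))$ vanishes. The genuine gap is in item (1). Your argument rests on the inclusion $\im(\phi_X)\subseteq\im(Ind^a_X\otimes\QQ)$, which, since $\phi_X$ is a rational isomorphism onto $K_0(\cK_b(X))\otimes_\ZZ\QQ$, is exactly the assertion that the rationalized analytic index morphism is \emph{surjective}. You offer no proof of this beyond the remark that each generator of $H_0^{\pcn}(X)$ ``can be realized as the symbol class of an explicit elliptic $b$-operator'', and you yourself flag it as the main obstacle. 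No such realization is constructed anywhere in the paper, and surjectivity of $Ind^a_X$ is neither proved nor needed there: the paper argues the contrapositive and only needs the strictly weaker statement that $Ind^a_X\otimes\QQ$ is not the zero map when its target $\QQ^{\chi_0(X)}$ is nonzero; a nonzero rational index class then yields, after clearing denominators and stabilizing, an elliptic operator with nonvanishing boundary analytic index, hence one violating $(\cH\cF\cP)$ and a fortiori $(\cF\cP)$.

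The ingredient your proposal is missing is the Monthubert--Nistor classifying space: a closed embedding $i:X\hookrightarrow Y$ of manifolds with embedded corners for which $Ind^a_Y\otimes\QQ$ and $i_*:K_0(\cK_b(X))\otimes_\ZZ\QQ\to K_0(\cK_b(Y))\otimes_\ZZ\QQ$ are isomorphisms and $i_*\circ Ind^a_X=Ind^a_Y\circ i!$. This identifies $Ind^a_X\otimes\QQ$ with the purely topological shriek map $i!$ (a Thom isomorphism for a tubular neighbourhood followed by extension by zero), whose nonvanishing is a classical topological fact. Without this, or some substitute producing at least one elliptic $b$-operator with nonzero index whenever $\chi_0(X)\neq 0$, your item (1) does not close. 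A secondary, smaller issue: in the purely rational cases of (2) (products of several codimension-three factors) you write that ``the image of $Ind^a_X$ is contained in the image of $\phi_X$, which is zero''; but $H_0^{\pcn}(X)=0$ then only gives that $K_0(\cK_b(X))$ is a torsion group, and speaking of the image of $\phi_X$ over $\ZZ$ is not meaningful when only a rational isomorphism is available, so the integral vanishing of $Ind_\partial([\sigma_b(D)])$ required by Theorem \ref{AnavsFredthm1intro} is not fully justified there.
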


%A careful study of the possible torsion on $H_{*}^{pcn}(X)$ in general is required, we will study this elsewhere.
%We beleive that the results above hold as well at the $\mathbb{Z}$-modules level, {\it i.e. without tensoring with $\mathbb{Q}$,} it is the case up to codimension two (theorem \ref{thmPCHvsKthcodim2}). A careful study of the possible torsion on $H_{*}^{pcn}(X)$ is required, we will study this elsewhere. 

We believe that the results above hold beyond the case of finite products of manifolds with corners of codimension $\leq 3$. On one side conormal homology can be defined and computed in all generality and in all examples we have the isomorphisms above still hold. The problem in general is to compute beyond the third  differential of the naturally associated spectral sequence for the K-theory groups for manifolds with corners of codimension greater or equal to four. This is technically a very hard task and besides explicit interesting examples become rare (not products). In fact, for any codimension, the correspondant spectral sequence in periodic conormal homology collapses at the second page as shown in the appendix. We believe it does collapse as well for K-theory because the results above. Another problem is related with the possible torsion of the conormal homology groups, indeed we prove in theorem \ref{freePCN} that for a finite product of manifolds with corners of codimension at most two these groups are torsion free and that the odd group for a three codimensional manifold with corners is torsion free as well. We think that in general these groups are torsion free but the combinatorics become very hard and one needs a good way to deal with all these data. We will discuss all these topics elsewhere.

Partial results in the direction of this paper were enterprised by several authors, we have already mentioned the seminal works of Melrose and Nistor in \cite{MelNis} and of Nazaikinskii, Savin and Sternin in \cite{NSS1} and \cite{NSS2}. In particular Melrose and Nistor start the computation of the $K-$groups of the algebra of zero order $b$-operators and some particular cases of Boundary analytic index morphisms as defined here (together with some topological formulas for them). Also, Nistor solves in \cite{NisGauge} the Fredholm Perturbation problem for manifolds with corners of the form a canonical simplex times a smooth manifold. Let us mention also the work of Monthubert and Nistor, \cite{MontNis}, in which they construct a classifying space associated to a manifold with corners whose $K-$theory can be in principle used to compute the analytic index above. We were very much inspired by all these works. In a slightly different framework,  Bunke  \cite{Bunke} studies  the obstruction  for the existence of {\sl tamings} of Dirac operators (that is, perturbations to  invertible ones) on manifolds with corners of arbitrary codimension, and also expresses these obstructions in terms of complexes associated with the faces. He then studies  local index theory and analytic obstruction theory for families.

The theorems above show the importance and interest of computing the Boundary Analytic and the Fredholm indices associated to a manifold with corners and if possible in a unified and in a topological/geometrical way. Using $K$-theory as above, for the case of a smooth compact connected manifold, the computation we are mentioning is nothing else that the Atiyah-Singer index theorem, \cite{AS}. As we mentioned already, for manifolds with boundary, Atiyah-Patodi-Singer gave a formula for the Fredholm index of a Dirac type operator. In fact, with the groupoid approach to index theory, several authors have contributed to the now realizable idea that one can actually use these tools to have a nice $K-$theoretical framework and to actually compute more general index theorems as in the classic smooth case. For example, in our common work with Monthubert, \cite{CLM}, we give a topological formula for the Fredholm Index morphism for manifolds with boundary that will allow us in a sequel paper to compare with the APS formula and obtain geometric information on the eta invariant. In the second paper of this series we will generalize our results of \cite{CLM} for general manifolds with corners by giving explicit topological index computations for the indices appearing above. 

%%%%%%%%%%%%%%%%%%%%%%%%%%%%%%%%%%%%%%%%%%%%%%%%%%%%%%%%%%%%%%%%%%%%%%%%%%%%%%%%%%%%%%%%%%%%%%%%%%%%%%%%%%%%%%%%%%%%%%%%%%%%%%%%%%%%%%%%%%%%%%%%%%%%%%%%
\section{Melrose b-calculus for manifolds with corners via groupoids}

\subsection{Preliminaries on groupoids, K-theory $C^*$-algebras and Pseudodifferential Calculus}  

All the material in this section is well known and by now classic for the people working in groupoid's $C^*$-algebras, $K$-theory and index theory. For more details and references the reader is sent to \cite{DL10}, \cite{NWX}, \cite{MP}, \cite{LMV}, \cite{HS83}, \cite{Ren}, \cite{AnRen}.

{\bf Groupoids:} Let us start with the definition.

\begin{definition}
A $\it{groupoid}$ consists of the following data:
two sets $\gr$ and $\go$, and maps
\begin{itemize}
\item[(1)]  $s,r:\gr \rightarrow \go$ 
called the source and range (or target) map respectively,
\item[(2)]  $m:\gr^{(2)}\rightarrow \gr$ called the product map 
(where $\gr^{(2)}=\{ (\gamma,\eta)\in \gr \times \gr : s(\gamma)=r(\eta)\}$),
\end{itemize}
such that there exist two maps, $u:\go \rightarrow \gr$ (the unit map) and 
$i:\gr \rightarrow \gr$ (the inverse map),
which, if we denote $m(\gamma,\eta)=\gamma \cdot \eta$, $u(x)=x$ and 
$i(\gamma)=\gamma^{-1}$, satisfy the following properties: 
\begin{itemize}
\item[(i).]$r(\gamma \cdot \eta) =r(\gamma)$ and $s(\gamma \cdot \eta) =s(\eta)$.
\item[(ii).]$\gamma \cdot (\eta \cdot \delta)=(\gamma \cdot \eta )\cdot \delta$, 
$\forall \gamma,\eta,\delta \in \gr$ when this is possible.
\item[(iii).]$\gamma \cdot x = \gamma$ and $x\cdot \eta =\eta$, $\forall
  \gamma,\eta \in \gr$ with $s(\gamma)=x$ and $r(\eta)=x$.
\item[(iv).]$\gamma \cdot \gamma^{-1} =u(r(\gamma))$ and 
$\gamma^{-1} \cdot \gamma =u(s(\gamma))$, $\forall \gamma \in \gr$.
\end{itemize}
Generally, we denote a groupoid by $\gr \rightrightarrows \go $. A morphism $f$ from
a  groupoid   $\hr \rightrightarrows \ho $  to a groupoid   $\gr \rightrightarrows \go $ is  given
by a map $f$ from $\gr$ to $\hr$ which preserves the groupoid structure, i.e.  $f$ commutes with the source, target, unit, inverse  maps, and respects the groupoid product  in the sense that $f(h_1\cdot h_2) = f (h_1) \cdot f(h_2)$ for any $(h_1, h_2) \in \hr^{(2)}$.

\end{definition}

%In  this paper we will only deal with Lie groupoids, that is, 
%a groupoid in which $\gr$ and $\go$ are smooth manifolds, and $s,r,m,u$ are smooth maps (with s and r submersions, see \cite{Mac,Pat}). 
For $A,B$ subsets of $\go$ we use the notation
$\gr_{A}^{B}$ for the subset 
\[
\{ \gamma \in \gr : s(\gamma) \in A,\, 
r(\gamma)\in B\} .
\]

We will also need the following definition:

\begin{definition}[Saturated subgroupoids]\label{defsaturated}
Let $\gr\rightrightarrows M$ be a groupoid.
\begin{enumerate}
\item A subset $A\subset M$ of the units is said to be saturated by $\gr$ (or only saturated if the context is clear enough) if it is union of orbits of $\gr$.
\item A subgroupoid 
\begin{equation}
\xymatrix{
\gr_1 \ar@<.5ex>[d]_{r\ } \ar@<-.5ex>[d]^{\ s}&\subset&\gr \ar@<.5ex>[d]_{r\ } \ar@<-.5ex>[d]^{\ s}  \\
M_1&\subset&M
}
\end{equation}
is a saturated subgroupoid if its set of units $M_1\subset M$ is saturated by $\gr$.

\end{enumerate}
\end{definition}

A groupoid can be endowed with a structure of topological space, or
manifold, for instance. In the case when $\gr$ and $\go$ are smooth
manifolds, and $s,r,m,u$ are smooth maps (with $s$ and $r$
submmersions), then $\gr$ is called a Lie groupoid. In the case of manifolds
with boundary, or with corners, this notion can be generalized to that
of continuous families groupoids (see \cite{Pat}) or as Lie groupoids if one considers the category of smooth manifolds with corners.

{\bf $C^*$-algebras:} To any Lie groupoid $\gr\rightrightarrows \go$ one has several $C^*-$algebra completions for the *-convolution algebra $C_c^\infty(\gr)$. Since in this paper all the groupoids considered are amenable we will be denoting by $C^*(\gr)$ the maximal and hence reduced $C^*$-algebra of $\gr$. From now on, all the groupoids are then going to be assumed amenable.

In the sequel we will use the following two results which hold in the generality of locally compact groupoids equipped with Haar systems.

\begin{enumerate}
\item Let $\gr_1$ and $\gr_2$ be two locally compact groupoids equipped with Haar systems. Then for locally compact groupoid $\gr_1\times \gr_2$ we have 
\begin{equation}
C^*(\gr_1\times \gr_2)\cong C^*(\gr_1)\otimes C^*(\gr_2).
\end{equation}
\item Let $\gr\rightrightarrows \go$ a locally compact groupoid with Haar system $\mu$. Let $U\subset \go$ be a saturated open subset, then $F:=\go\setminus U$ is a closed saturated subset. The Haar system $\mu$ can be restricted to the restriction groupoids $\gr_U:=\gr_U^U\rightrightarrows U$ and $\gr_F:=\gr_F^F\rightrightarrows F$, and we have the following short exact sequence of $C^*$-algebras:
\begin{equation}\label{suiteres}
\xymatrix{
0\ar[r]&C^*(\gr_U)\ar[r]^i&C^*(\gr)\ar[r]^r&C^*(\gr_F)\ar[r]&0
}
\end{equation}
where $i:C_c(\gr_U)\to C_c(\gr)$ is the extension of functions by zero and $r:C_c(\gr)\to C_c(\gr_F)$ is the restriction of functions.
\end{enumerate}

{\bf $K$-theory:} We will be considering the $K$-theory groups of the $C^*$-algebra of a groupoid, for space purposes we will be denoting these groups by
\begin{equation}
K^*(\gr):=K_*(C^*(\gr)).
\end{equation}

We will use the classic properties of the $K$-theory functor, mainly its homotopy invariance and the six term exact sequence associated to a short exact sequence. Whenever the groupoid in question is a space (unit's groupoid) $X$ we will use the notation 
\begin{equation}
K^*_{top}(X):=K_*(C_0(X)).
\end{equation}
to remark that in this case this group is indeed isomorphic to the topological $K-$theory group.

{\bf $\Psi$DO Calculus for groupoids.} A pseudodifferential operator on a Lie groupoid (or more generally a continuous family groupoid) $\gr$ is a family of peudodifferential
operators on the fibers of $\gr$ (which are smooth manifolds without
boundary), the family being equivariant under the natural action of $\gr$. 

Compactly supported pseudodifferential operators  form an algebra, denoted by
$\Psi^\infty(\gr)$. The algebra of order 0 pseudodifferential operators
can be completed into a $C^*$-algebra, $\overline{\Psi^0}(\gr)$. There
exists a symbol map, $\sigma$, whose kernel is $C^*(\gr)$. This gives
rise to the following exact sequence:
$$0 \to C^*(\gr) \to \overline{\Psi^0}(\gr) \to C_0(S^*(\gr))\to 0$$
where $S^*(\gr)$ is the cosphere bundle of the Lie algebroid of $\gr$.

In the general context of index theory on groupoids, there is an
analytic index which can be defined in two ways. The first way,
classical, is to consider the boundary map of the 6-terms exact
sequence in $K$-theory induced by the short exact sequence above:
$$ind_a: K_1(C_0(S^*(\gr))) \to K_0(C^*(\gr)).$$

Actually, an alternative is to define it through the tangent groupoid
of Connes, which was originally defined for the groupoid of a smooth
manifold and later extended to the case of continuous family groupoids
(\cite{MP,LMNpdo}). The tangent groupoid of a Lie groupoid $\gr\rightrightarrows\go$ is a Lie groupoid
$$\gr^{tan}=A(\gr)\bigsqcup \gr\times (0,1]\rightrightarrows \gr^{(0)}\times
[0,1],$$ 
with smooth structure given by the deformation to the normal cone construction, see for example \cite{Ca2} for a survey of this construction related with the tangent groupoid construction. 
%In particular the Lie algebroid can be identified with $A\gr=Ker\,ds|_{\gr^{(0)}} $, see (\ref{AGds}) for more details.

Using the evaluation maps, one has two $K$-theory morphisms, $e_0: K_0(C^*(\gr^{tan}))
\to K^0_{top}(A\gr)$ which is an isomorphism (since $K_*(C^*(\gr\times (0,1]))=0$), and $e_1: K_*(C^0(\gr^{tan})) \to K_0(C^*(\gr))$. The analytic
index can be defined as
$$ind_a=e_1 \circ e_0^{-1}:K^0_{top}(A^*\gr) \to K_0(C^*(\gr)).$$
modulo the surjection $K_1(C_0(S^*(\gr))\to K^0(A^*\gr)$.

See \cite{MP, NWX, Mont, LMNpdo,VassoutJFA} for a
detailed presentation of pseudodifferential calculus on groupoids.

\subsection{Melrose b-calculus for manifolds with corners via the b-groupoid}

We start by defining the manifolds with corners we will be using in the entire paper.

A manifold with corners is a Hausdorff space covered by compatible coordinate charts with coordinate functions modeled in the spaces
$$\bR_k^n:=[0,+\infty)^k\times \bR^{n-k}$$
for fixed $n$ and possibly variable $k$.

\begin{definition}
A manifold with embedded corners $X$ is a Hausdorff topological space endowed with a subalgebra $C^{\infty}(X)\in C^0(X)$ satisfying the following conditions:
\begin{enumerate}
\item there is a smooth manifold $\tilde{X}$ and a map $\iota:X\to \tilde{X}$ such that $$\iota^*(C^\infty(\tilde{X}))=C^\infty(X),$$
\item there is a finite family of functions $\rho_i\in C^\infty(\tilde{X})$, called the defining functions of the hyperfaces, such that
$$\iota(X)=\bigcap_{i\in I}\{\rho_i\geq0\}.$$
\item for any $J\subset I$, 
\begin{center}
$d_x\rho_i(x)$ are linearly independent in $T^*_x\tilde{X}$ for all $x\in F_J:=\bigcap_{i\in J}\{\rho_i=0\}$.
\end{center}
\end{enumerate}
\end{definition}

{\bf Terminology:} In this paper we will only be considering manifolds with embedded corners. We will refer to them simply as manifolds with corners. We will also assume our manifolds to be connected. More general manifold with corners deserve attention but as we will see in further papers it will be more simple to consider them as stratified pseudomanifolds and desingularize them as manifolds with embedded corners with an iterated fibration structure.

\vspace{2mm}

Given a compact manifold corners $X$, Melrose\footnote{for entire details in the case with corners see the paper of Melrose and Piazza \cite{MelPia}} constructed in \cite{Mel} the algebra $\Psi_b^*(X)$ of $b$-pseudodifferential operators. The elements in this algebra are called $b-$pseudodifferential operators, the subscript $b$ identifies these operators as obtained by "microlocalization" of the Lie algebra of $C^\infty$ vector fields on $X$ tangent to the boundary. This Lie algebra of vector fields can be explicitly obtained as sections of the so called $b$-tangent bunlde $^bTX$ (compressed tangent bundle that we will appear below as the Lie algebroid of an explicit Lie groupoid).  The b-pseudodifferential calculus developed by Melrose has the classic and expected properties. In particular there is a principal symbol map
$$\sigma_b:\Psi_b^m(X)\to S^{[m]}(^bT^*X).$$  
Ellipticity has the usual meaning, namely invertibility of the principal symbol. Moreover (discussion below theorem 2.15 in \cite{MelPia}), an operator is elliptic if and only\footnote{Notice that this remark implies that to a $b$-pseudodifferential operator one can associate an "index" in the algebraic K-theory group $K_0(\Psi_b^{-\infty}(X))$ (classic construction of quasi-inverses).} if it has an quasi-inverse modulo $\Psi_b^{-\infty}(X)$. Now, the operators in $\Psi_b^{-\infty}(X)$ are not all compact (unless the topological boundary $\partial X=\emptyset$) but they contain a subalgebra consisting of compact operators (those for which certain indicial map is zero, p.8 ref.cit.). Hence, among elliptic $b$-pseudodifferential operators one has those for which the quasi-inverse is actually modulo compact operators and hence Fredholm (again, see p.8 ref.cit. for a characterization of Fredholm operators in terms of an indicial map), these $b$-elliptic operators are called fully elliptic operators.

Now, as every $0$-order $b$-pseudodifferential operator (ref.cit. (2.16)), the operators in $\Psi_b^{-\infty}(X)$ extend to bounded operators on $L^2(X)$ and hence if we consider its completion as bounded operators one obtains an algebra denoted in this paper by $\cK_b(X)$ that fits in a short exact sequence of $C^*-$algebras of the form
\begin{equation}\label{bcompact}
\xymatrix{
0\ar[r]&\cK(X)\ar[r]^-{i_0}&\cK_b(X)\ar[r]^-{r}&\cK_b(\partial X) \ar[r]&0
}
\end{equation}
where $\cK(X) $ is the algebra of compact operators in $L^2(X)$.

Let $X$ be a compact manifold with embedded corners, so by definition we are assuming there is a smooth compact manifold (of the same dimension) $\tilde{X} $ with $X\subset \tilde{X}$ and $\rho_1,..., \rho_n$ defining functions of the faces. In \cite{Mont}, Monthubert constructed a Lie groupoid (called Puff groupoid) associated to any decoupage $(\tilde{X}, (\rho_i))$, it has the following expression
\begin{equation}\label{Puffgrpd}
G(\tilde{X},(\rho_i))= \{(x,y,\lambda_1,...,\lambda_n)\in \tilde{X}\times \tilde{X}\times \RR^n: \rho_i(x)=e^{\lambda_i}\rho_i(y)\}.
\end{equation}
as a Lie subgroupoid of $\tilde{X}\times \tilde{X}\times \RR^k$. The Puff groupoid is not s-connected, denote by $G_c(\tilde{X},(\rho_i))$ its s-connected component.

%Consider the following locally compact groupoid (It is a Lie groupoid in the category of manifolds with corners) \begin{equation} X\times X\times \RR^k\rightrightarrows X \end{equation}
%which is the product of the pair groupoid with the additive group $\RR^k$. This groupoid is not suitable for doing analysis but we will use it to construct more appropriate groupoids. The first one is the subgroupoid \begin{equation}\label{tildebgrpd} \widetilde{\Gamma_b(X)}:=\{(x,y,\lambda_1,...,\lambda_k)\in X\times X\times \RR^k: \rho_i(x)=e^{\lambda_i}\rho_i(y)\}\end{equation}
%which is a groupoid over $X$ (obvious induced source and target maps). The groupoid above has the little inconvenient of not being s-connected, we can easily repare it by taking its s-connected component:

\begin{definition}[The $b$-groupoid]\label{def:b-groupoid}
The $b-$groupoid $\Gamma_b(X)$ of $X$ is by definition the restriction to $X$ of the s-connected Puff groupoid (\ref{Puffgrpd}) considered above, that is
\begin{equation}
\Gamma_b(X):= G_c(\tilde{X},(\rho_i))|_X\rightrightarrows X
\end{equation}
\end{definition}

The $b-$groupoid was introduced by B. Monthubert in order to give a groupoid description for the Melrose's algebra of $b$-pseudodifferential operators. We summarize below the main properties we will be using about this groupoid:

\begin{theorem}[Monthubert \cite{Mont}]
Let $X$ be a manifold with corners as above, we have that
\begin{enumerate}
\item $\Gamma_b(X)$ is a $C^{0,\infty}$-amenable groupoid.
\item It has Lie algebroid $A(\Gamma_b(X))=^bTX$, the $b$-tangent bundle of Melrose.
\item Its $C^*-$algebra (reduced or maximal is the same since amenability) coincides with the algebra of $b$-compact operators. The canonical isomorphism
\begin{equation}
C^*(\Gamma_b(X))\cong \cK_b(X)
\end{equation}
is given as usual by the Schwartz Kernel theorem.
\item The pseudodifferential calculus of $\Gamma_b(X)$ coincides with compactly supported $b$-calculus of Melrose.
\end{enumerate}
\end{theorem}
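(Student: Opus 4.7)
The plan is to verify the four assertions by directly unpacking the explicit description (\ref{Puffgrpd}) of the puff groupoid and then restricting to $X$, relying on Monthubert's original arguments \cite{Mont} for the technical bookkeeping. A uniform conceptual framework is useful: the puff groupoid sits as a closed Lie subgroupoid of $\tilde X\times \tilde X\times \RR^n$, and I would exploit this ambient embedding throughout.

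\textbf{Amenability (1).} I would first prove amenability of the ambient groupoid $\tilde X\times\tilde X\times \RR^n$, which is the product of the pair groupoid on the compact manifold $\tilde X$ and the abelian Lie group $\RR^n$; both factors are amenable, and amenability is preserved under direct products. Then I would describe $G(\tilde X,(\rho_i))$ as an extension: the natural projection $(x,y,\lambda)\mapsto (x,y)$ exhibits it as a Lie groupoid whose kernel is, fiberwise, either $\{0\}$, a coordinate subspace of $\RR^n$, or all of $\RR^n$, according to which $\rho_i$ vanish. Such an extension of an amenable groupoid by amenable isotropy is amenable. Amenability then descends to the $s$-connected component and to the restriction $\Gamma_b(X)=G_c(\tilde X,(\rho_i))|_X$ along the closed saturated inclusion $X\hookrightarrow \tilde X$.

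\textbf{Lie algebroid (2).} The computation is local and linear. At a point $(x,x,0)$ of the unit space I would linearize the defining relation $\rho_i(x)=e^{\lambda_i}\rho_i(y)$ to obtain, for a tangent vector $(v,w,\ell)\in T_x\tilde X\oplus T_x\tilde X\oplus \RR^n$ transverse to the diagonal along the $s$-fibers, the constraint $d\rho_i(v-w)+\ell_i\,\rho_i(x)=0$. Projecting along $s$ gives a vector field $V$ on $\tilde X$ paired with numbers $\ell_i$ satisfying $d\rho_i(V)=-\ell_i\rho_i(x)$; at a boundary point where $\rho_i(x)=0$ this forces $V$ to be tangent to the hyperface $\{\rho_i=0\}$. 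Sections of $A(\Gamma_b(X))$ are therefore exactly vector fields on $X$ tangent to every hyperface, which is Melrose's definition of $^bTX$.

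\textbf{Identification of the $C^*$-algebra and of the calculus (3 and 4).} For (4) I would invoke the general theory recalled above: operators in $\Psi^*(\Gamma_b(X))$ are $\Gamma_b(X)$-equivariant families of classical pseudodifferential operators on the $s$-fibers, whose principal symbols are sections of $S^{[m]}(A^*\Gamma_b(X))=S^{[m]}({}^bT^*X)$ by (2). Comparing Schwartz kernels with those produced by Melrose's blow-up construction, one checks that the kernel of a $b$-pseudodifferential operator in the sense of \cite{Mel,MelPia} lifts to a conormal distribution on $\Gamma_b(X)$, and conversely every compactly supported equivariant family on $\Gamma_b(X)$ descends to a $b$-operator on $X$; the two symbol calculi then coincide. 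Statement (3) follows by specializing to smoothing operators: the Schwartz kernel theorem yields a bijection between $\Psi_b^{-\infty}$-kernels and smooth compactly supported convolution kernels on $\Gamma_b(X)$, this bijection respects the $\ast$-algebra structures and the $L^2$ operator norm, and completion produces the isomorphism $C^*(\Gamma_b(X))\cong \cK_b(X)$. Amenability from (1) ensures the reduced and maximal completions agree.

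The hard part is not any single step but the careful matching of densities and indicial data in (3) and (4): one must check that the normalization of half-$b$-densities on $X\times X$ used by Melrose agrees, after pull-back to $\Gamma_b(X)$, with the Haar system inherited from the ambient pair groupoid. I would treat this bookkeeping by direct comparison in local coordinates near a corner of codimension $k$, where both sides can be written explicitly in terms of $dx_i/x_i$-type densities along the boundary factors, and otherwise refer to \cite{Mont} for the remaining verifications.
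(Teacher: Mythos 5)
The paper does not prove this theorem: it is quoted verbatim from Monthubert's work \cite{Mont} and used as a black box, so there is no internal proof to compare your attempt against. Judged on its own, your outline follows the standard route and the four verifications are the right ones (linearization of $\rho_i(x)=e^{\lambda_i}\rho_i(y)$ for the algebroid, Schwartz-kernel comparison on the $b$-stretched product for (3) and (4), with the density normalization correctly identified as the delicate point). One step deserves more care: in (1) you describe $G(\tilde X,(\rho_i))$ as an extension of the pair groupoid of $\tilde X$ by amenable isotropy, but the projection $(x,y,\lambda)\mapsto(x,y)$ does not surject onto that pair groupoid --- its image is only the equivalence relation ``$x$ and $y$ lie in the same open face'', and the kernel is a field of subspaces of $\RR^n$ whose dimension jumps across faces, so this is not an extension of locally compact groupoids with Haar systems in the sense required by the permanence theorems. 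The standard repair, which is also what the present paper implicitly uses through the filtration \eqref{eq:K-filt} and the identification \eqref{Alg-for-E1}, is to stratify the unit space by the open faces: over each face $f$ of codimension $k$ the restricted groupoid is $\cC(f)\times\RR^k$, which is amenable, and amenability then follows from its stability under extensions associated with invariant open subsets and their closed complements. Your claim that $X$ is saturated in $\tilde X$ is correct (the relation $\rho_i(x)=e^{\lambda_i}\rho_i(y)$ preserves the sign of each $\rho_i$), so the final restriction step is fine.
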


\begin{remark}
To simplify, in the present paper, we only discuss the case of scalar operators. The case of operators acting on sections of vector bundles is treated as classically by considering bundles of homomorphisms. 
\end{remark}
%%%%%%%%%%%%%%%%%%%%%%%%%%%%%%%%%%%%%%%%%%%%%%%%%%%%%%%%%%%%%%%%%%%%%%%%%%%%%%%%%%%%%%%%%%%%%%%%%%%%%%%%%%%%%%%%%%%%%%%%%%%%%%%%%%%%%%%%%%%%%%%%%%%%%%%

\section{Boundary analytic and Fredholm Indices for manifolds with corners: relations and Fredholm Perturbation   characterization}

We will now introduce the several index morphisms we will be using, mainly the Analytic and the Fredholm index.
In all this section, $X$ denotes a compact and connected manifold with embedded corners. 

\subsection{Analytic and Boundary analytic Index morphisms}\label{subsectionAnalyticindexmorphism}

Any elliptic $b$-pseudodifferential $D$ has an analytical index $\mathrm{Ind}_{\mathrm{an}}(D)$ given by 
\[
\mathrm{Ind}_{\mathrm{an}}(D)=I([\sigma_b(D)]_1)\in K_0(\cK_b(X))
\]
where $I$ is the connecting homomorphism  in $K$-theory of exact sequence 
\begin{equation}\label{bKses}
\xymatrix{
0\ar[r]&\cK_b(X)\ar[r]&\overline{\Psi_b^0(X)}\ar[r]^-{\sigma_b}&C(^bS^*X)\ar[r]&0.
}
\end{equation}
and $[\sigma_b(D)]_1$ is the class in $K_1(C({}^bS^*X)$ of the principal symbol $\sigma_b(D)$ of $D$.

Alternatively, we can express $\mathrm{Ind}(D)$ using adiabatic deformation groupoid of $\Gamma_b(X)$ and the class in $K_0$ of the same symbol, namely:
\begin{equation}
 [\sigma_b(D)] = \delta([\sigma_b(D)]_1)\in K_0(C_0({}^bT^*X))
\end{equation}
where $\delta$ is the connecting homomorphism of the exact sequence relating the vector and  sphere bundles:
\begin{equation}\label{bTS}
\xymatrix{
0\ar[r]& C_0({}^bT^*X)\ar[r]& C_0({}^bB^*X)\ar[r]&C({}^bS^*X)\ar[r]&0.
}
\end{equation}
Indeed, consider the exact sequence 
\begin{equation}
\xymatrix{
0\ar[r]&C^*(\Gamma_b(X)\times (0,1])\ar[r]&C^*(\Gamma_b^{tan}(X))\ar[r]^-{r_0}&C^*(^bTX)\cong C_0(^bT^*X)\ar[r]&0,
}
\end{equation}
 in which the ideal is $K$-contractible and set 
\begin{equation}
Ind^a_X= r_1 \circ r_0^{-1} : K^0_{top}(^bT^*X)\longrightarrow K_0(\cK_b(X))
\end{equation}
where $r_1 : K_0(C^*(\Gamma_b^{tan}(X)))\to K_0(C^*(\Gamma_b(X))) $ is induced by the restriction morphism to $t=1$.
Applying a mapping cone argument to the exact sequence (\ref{bKses}) gives a commutative diagram
\begin{equation}
\xymatrix{
K_1(C(^bS^*X))\ar[rd]_-{\delta}\ar[rr]^-{I}&&K_0(\cK_b(X))\\
&K^0_{top}(^bT^*X)\ar[ru]_-{Ind^a_X}&
}
\end{equation}
Therefore we get, as announced:
\begin{equation}
  \mathrm{Ind}_{\mathrm{an}}(D) = Ind^a_X([\sigma_b(D)])
\end{equation}
 The map $Ind^a_X$ will be called  the {\it  Analytic Index morphism} of $X$.  A closely related homomorphism is the {\it  Boundary analytic Index morphism}, in which  the restriction to $X\times\{1\}$ is replaced  by the one to $\partial X\times\{1\}$, that is, we set:
 \begin{equation}
  Ind^\partial_X =  r_\partial \circ r_0^{-1}  : K_0(C_0(^bT^*X)\longrightarrow K_0(C^*(\Gamma_b(X)|_{\partial X})),
 \end{equation}
 where $r_\partial$ is induced by the   homomorphism  
 $C^*(\Gamma^{tan}_b(X))\longrightarrow C^*(\Gamma_b(X))|_{\partial X} $. We have of course 
 \begin{equation}
  Ind^\partial_X = r_{1,\partial}\circ Ind^a_X 
 \end{equation}
if $r_{1,\partial}$ denotes the map induced by the homomorphism  $C^*(\Gamma_b(X))\longrightarrow C^*(\Gamma_b(X)|_{\partial X})$.

\subsection{Fredholm Index morphism}\label{Fredsection}
In general, elliptic $b$-operators on $X$ are not Fredholm. Indeed, to  construct an inverse of a $b$-operator modulo compact terms,  we  have to invert not only the principal symbol but also all the family of boundary symbols. One way to summarize this situation is to introduce the  algebra of full, or joint, symbols. Let $\cH$ be the set of closed boundary hyperfaces of $X$, and set 
\begin{equation}
 \cA_{\cF}= \left\{ \Big(a,(q_H)_{H\in \cH}\Big) \in C^\infty({}^bS^*X)\times\prod_{H\in \cH}\Psi^0(\Gamma_b(X)\vert_H \quad ; \quad \forall H\in \cH, \ a\vert_H=\sigma_b(q_H) \right\}.
\end{equation}
The full symbol map: 
\begin{equation}
 \sigma_F : \Psi^0(\Gamma_b(X)\ni P \longmapsto \Big( \sigma_b(P),(P\vert_H)_{H\in \cH}\Big) \in \cA_{\cF}
\end{equation}
extends to the $C^*$-closures of the algebras and the assertion about the invertibility modulo compact operators amounts to the exactness of the sequence \cite{LMNpdo}:
\begin{equation}\label{Fredses}
\xymatrix{
0\ar[r]&\cK(X)\ar[r]&\overline{\Psi^0(\Gamma_b(X))}\ar[r]^-{\sigma_F}& \overline{\cA_{\cF}}\ar[r]&0
}
\end{equation}
Then one set:
\begin{definition}[Full Ellipticity]
An operator $D\in \Psi^0(\Gamma_b(X))$ is said to be fully elliptic if $\sigma_F(D)$ is invertible.
\end{definition}
We then recall the following result  of Loya \cite{Loya} (the statement also appears in \cite{MelPia}). Remember that $b$-Sobolev spaces $H^s_b(X)$ are defined using $b$-metrics and  $b$-operators map continuously   
$H_b^{m}(X)$ to $ H_b^{s-m}(X)$ for every $s$.
\begin{theorem}[\cite{Loya}, Theorem 2.3]\label{FELL=Fred}
An operator $D\in \Psi_b^0(X)$ is fully elliptic if and only if it is Fredholm on $H_b^{s}(X)$ for some $s$ (and then for any $s$, with Fredholm index independent of $s$).
\end{theorem}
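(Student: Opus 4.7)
My plan is to treat the two implications separately and then address index independence.

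For the forward direction (fully elliptic $\Rightarrow$ Fredholm), the main lever is the short exact sequence (\ref{Fredses}). If $D$ is fully elliptic, then $\sigma_F(D)$ is invertible in $\overline{\cA_\cF}$, so by surjectivity of $\sigma_F$ I can pick any preimage $Q\in \overline{\Psi^0(\Gamma_b(X))}$ of $\sigma_F(D)^{-1}$. Since $\sigma_F$ is a $*$-homomorphism, $\sigma_F(DQ-I)=\sigma_F(QD-I)=0$, and exactness of (\ref{Fredses}) gives $DQ-I,\,QD-I\in \cK(X)$. Because elements of $\cK(X)$ are compact operators on $L^2_b(X)=H^0_b(X)$, Atkinson's theorem makes $D$ Fredholm on $L^2_b(X)$. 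To extend to arbitrary $s$, I would observe that $Q$, being a $b$-operator of order $0$, is bounded on every $H^s_b(X)$, and that the error terms $DQ-I$, $QD-I$, lying in the norm closure of the smoothing ideal of the $b$-calculus, remain compact on every $H^s_b(X)$.

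For the converse (Fredholm $\Rightarrow$ fully elliptic) I would argue contrapositively: assume $\sigma_F(D)$ is not invertible in $\overline{\cA_\cF}$ and build a singular sequence contradicting Fredholmness. Two (possibly overlapping) cases arise. Case (i): the principal symbol $\sigma_b(D)$ vanishes at some $\xi_0\in {}^bS^*X$. A standard microlocal oscillatory bump construction at the base point of $\xi_0$, formulated in the $b$-phase space if the point lies over a boundary face, produces unit-norm $u_n\rightharpoonup 0$ with $\|Du_n\|\to 0$. Case (ii): $\sigma_b(D)$ is invertible but some indicial restriction $D|_H$ fails to be invertible in $\overline{\Psi^0(\Gamma_b(X)|_H)}/\cK$. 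Here I would exploit that a tubular neighborhood of $H$ inside $\Gamma_b(X)$ is groupoid-isomorphic to a product carrying the indicial structure in the normal direction, and transplant an approximate kernel of $D|_H$ back to $X$ by multiplying with a cut-off concentrated in the normal variable. In either case the resulting Weyl-type sequence on $H^s_b(X)$ precludes Fredholmness.

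For the independence of the Fredholm index on $s$, the parametrix $Q$ of the forward direction gives elliptic regularity for fully elliptic $b$-operators: $Du=f$ with $f\in H^{s+N}_b$ forces $u\in H^{s+N}_b$ modulo an a priori controlled remainder, so iterating one sees $\ker_{H^s_b}D = \ker_{C^\infty(X)}D$ independently of $s$; applying the same argument to the formal $b$-adjoint handles the cokernel.

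The main obstacle I anticipate is executing case (ii) of the converse rigorously. The indicial groupoid $\Gamma_b(X)|_H$ has noncompact source fibres (they contain the dilation directions normal to $H$), so ``non-invertibility of $D|_H$'' is a $C^*$-algebraic statement and has to be converted into a concrete sequence of $L^2_b(X)$-normalized approximate zero modes. Doing this cleanly requires the amenability of $\Gamma_b(X)|_H$ (so that failure of invertibility is detected in the regular representation), together with a careful cut-off/gluing argument ensuring that the transplanted sequence remains orthogonal to any prescribed finite-dimensional subspace, which is what is needed to rule out Fredholmness on $H^s_b(X)$.
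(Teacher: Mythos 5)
First, note that the paper does not actually prove this statement: it is imported verbatim from Loya (Theorem 2.3 of \cite{Loya}), so there is no internal argument to compare yours against. Judged on its own terms, your proposal has two genuine gaps. The first is in the forward direction for $s\neq 0$: you take the parametrix $Q$ to be \emph{any} preimage of $\sigma_F(D)^{-1}$ in the $C^*$-closure $\overline{\Psi^0(\Gamma_b(X))}$, but an element of that closure is only known to be bounded on $L^2_b(X)$; it need not be bounded, let alone have compact remainders, on $H^s_b(X)$ for $s\neq 0$. The same objection applies to the remainders $DQ-I$, $QD-I$: they lie in $\cK(X)$, the norm closure of the residual ideal inside $\cB(L^2)$, and compactness on $H^s_b$ does not follow. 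To get all $s$ one must construct $Q$ inside the symbolic $b$-calculus (invert the principal symbol and all indicial families there), so that the remainder lands in a residual class of the calculus that acts compactly on every $b$-Sobolev space; alternatively, one conjugates by an invertible elliptic $b$-operator of order $s$ to reduce to $s=0$. Either way, the $C^*$-closure alone is not enough.

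The second gap is the one you flag yourself: case (ii) of the converse is not executed, and the transplantation of an approximate kernel of the indicial operator back to $X$ is exactly the hard analytic content of the theorem. It is worth pointing out that this entire direction can be bypassed. If $D$ is Fredholm on $L^2_b(X)$, its image in the Calkin algebra $\cB(L^2)/\cK(L^2)$ is invertible; since $\cK(X)=\overline{\Psi^0(\Gamma_b(X))}\cap\cK(L^2)$, the exact sequence (\ref{Fredses}) identifies $\overline{\cA_\cF}\cong\overline{\Psi^0(\Gamma_b(X))}/\cK(X)$ with a unital $C^*$-subalgebra of the Calkin algebra, and spectral permanence for $C^*$-subalgebras then forces $\sigma_F(D)$ to be invertible in $\overline{\cA_\cF}$ itself. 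This replaces your two-case Weyl-sequence construction by one line, at the cost of verifying the intersection identity $\cK(X)=\overline{\Psi^0(\Gamma_b(X))}\cap\cK(L^2)$, which is where the amenability/regular-representation input you mention actually enters. I would restructure the converse around this observation rather than attempt the cut-off and gluing argument.
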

For a given fully elliptic operator $D$, we denote by $\mathrm{Ind}_{\mathrm{Fred}}(D)$ its Fredholm index. 
We are going to express this number in terms of $K$-theory and clarify the relationship between the analytical index and  full ellipticity on $X$ using deformation groupoids. Let us star with the tangent groupoid
\begin{equation}
\Gamma_b(X)^{tan}:=(G_c(\tilde{X},(\rho_i))^{tan})|_{X\times [0,1]}=T_bX\bigsqcup \Gamma_b(X)\times (0,1]\rightrightarrows X\times [0,1].
\end{equation}
Now we introduce the two following saturated subspaces of $X\times[0,1]$:
\begin{equation}
 X_\cF:=X\times [0,1]\setminus \partial X\times \{1\} \text{ and } X_\partial:=X_\cF\setminus \intx \times (0,1]=X\cup \partial X\times[0,1).
\end{equation}
The {\bf Fredholm $b$-groupoid}  and the {\bf noncommutative tangent space} of $X$  are defined  by  
\begin{equation}\label{Fredgrpd}
\Gamma_b(X)^{Fred}:=\Gamma_b(X)^{tan}|_{X_\cF} \text{ and } T_{nc}X:=\Gamma_b(X)^{Fred}|_{X_\partial}
\end{equation}
respectively. They are obviously $KK$-equivalent as one sees using the  exact sequence:
\begin{equation}\label{btangentsuite}
\xymatrix{
0\ar[r]&C^*(\intx\times \intx\times (0,1])\ar[r]&C^*(\Gamma_b(X)^{\cF})\ar[r]^-{r_F}&C^*(T_{nc}X)\ar[r]&0
}
\end{equation}
whose ideal is $K$-contractible. We then define the  {\bf Fredholm index morphism} by:
\begin{equation}\label{Fredmorph}
Ind^X_F=(r_1)_*\circ(r_F)_{*}^{-1}:K^0(T_{nc}X)\longrightarrow
K^0(\intx\times \intx)\simeq \mathbb{Z},
\end{equation}
%We will explain below why this morphism computes 
 Following \cite[Definition 10.4]{DLR}, we denote  by $\mathrm{FE}(X)$ the group of order $0$ fully elliptic operators modulo stable homotopy. Then the vocabulary above is justified by:
\begin{proposition}\label{FullyAPS}
There exists a group isomorphism
\begin{equation}
  \sigma_{nc} : \mathrm{FE}(X)  \longrightarrow K_0(C^*(\mathcal{T}_{nc}X))
\end{equation}
such that   
\begin{equation}
 r_0([\sigma_{nc}(D)])= [\sigma_b(D)]\in K_0(C_0({}^bT^*X))
\quad \text{ and } \quad
Ind_F^X([\sigma_{nc}(D)])= \mathrm{Ind}_{\mathrm{Fred}}(D),
\end{equation}
where $r_0$ comes from the  natural restriction map $C^*(\mathcal{T}_{nc}X)\to C_0({}^bT^*X)$. 
\end{proposition}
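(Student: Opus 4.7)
The plan is to construct $\sigma_{nc}$ through the pseudodifferential calculus of the Fredholm $b$-groupoid $\Gamma_b(X)^{Fred}$, and to transport the resulting index class to $C^*(T_{nc}X)$ using the exact sequence~\eqref{btangentsuite}: its ideal $C^*(\intx\times\intx\times(0,1])\simeq \cK\otimes C_0((0,1])$ is $K$-contractible, so $(r_F)_*$ is an isomorphism on $K$-theory.

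Given a fully elliptic order zero operator $D\in\Psi_b^0(X)$, the tangent groupoid mechanism produces a natural deformation $\tilde D \in \overline{\Psi^0(\Gamma_b(X)^{Fred})}$ whose restriction at $t=1$ is $D$ viewed on $\intx\times \intx$, whose restriction at $t\in(0,1)$ is $D$ itself on $\Gamma_b(X)$, and whose restriction at $t=0$ is multiplication by $\sigma_b(D)$ on $C^*({}^bTX)\simeq C_0({}^bT^*X)$. Full ellipticity of $D$ is precisely the statement that the full symbol of $\tilde D$ in the exact sequence
\[
0\to C^*(\Gamma_b(X)^{Fred})\to \overline{\Psi^0(\Gamma_b(X)^{Fred})}\to C_0(S^*A(\Gamma_b(X)^{Fred}))\to 0
\]
is invertible everywhere (the interior $b$-symbol on $X\times[0,1]$ is invertible, and the indicial operators at the boundary faces over $\partial X\times[0,1)$ are too). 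Hence $\tilde D$ has an index class $[\tilde D]\in K_0(C^*(\Gamma_b(X)^{Fred}))$ via the associated connecting map, and I set $\sigma_{nc}(D):=(r_F)_*[\tilde D] \in K_0(C^*(T_{nc}X))$.

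The remaining verifications are then largely routine. Stable homotopy invariance follows by lifting a homotopy of fully elliptic operators to a homotopy of invertible full symbols over $\Gamma_b(X)^{Fred}\times[0,1]$ and applying $K$-theory homotopy invariance; additivity comes from the direct sum construction for pseudodifferential operators, making $\sigma_{nc}$ a group homomorphism. For the two compatibility identities, evaluation at $t=0$ sends $\tilde D$ to the multiplication operator by $\sigma_b(D)$, giving $r_0([\sigma_{nc}(D)])=[\sigma_b(D)]$; evaluation at $t=1$ sends $\tilde D$ to $D$ on $\intx$, and under $C^*(\intx\times\intx)\simeq \cK$, combined with Theorem~\ref{FELL=Fred}, the resulting $K_0$-class is exactly the classical Fredholm index of $D$, yielding $\mathrm{Ind}_F^X([\sigma_{nc}(D)])=\mathrm{Ind}_{\mathrm{Fred}}(D)$.

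The main obstacle will be bijectivity. I would construct an inverse by realizing each $K_0(C^*(T_{nc}X))$ class (possibly after stabilization by vector bundles) as the symbol index of an elliptic order zero operator on $T_{nc}X$, lifting it through $(r_F)_*^{-1}$ to an order zero operator on $\Gamma_b(X)^{Fred}$, and extracting its $t=1$ restriction as a fully elliptic $b$-operator on $X$. Checking that this procedure is well defined on stable homotopy classes and genuinely inverse to $\sigma_{nc}$ is the technical heart: it requires systematic use of the six-term sequences for the deformation groupoids $\Gamma_b^{tan}(X)$ and $\Gamma_b(X)^{Fred}$, together with a careful comparison of their connecting maps, in the spirit of~\cite{DLR}.
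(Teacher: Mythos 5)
The paper itself does not really prove this proposition: it defers entirely to the method of \cite[Theorem 4]{Savin2005} and \cite[Theorem 10.6]{DLR}. Your overall architecture is nevertheless the right one and matches diagram \eqref{AnavsFredses}: work over $\Gamma_b(X)^{Fred}$, use the $K$-contractibility of $C^*(\intx\times\intx\times(0,1])$ to transport classes to $K_0(C^*(T_{nc}X))$ via $(r_F)_*^{-1}$, and obtain the two compatibility identities by evaluation at $t=0$ and $t=1$. The problem is the central step, the definition of the class $[\tilde D]$, which as written is not just incomplete but cannot work.

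You define $[\tilde D]\in K_0(C^*(\Gamma_b(X)^{Fred}))$ as the image under the connecting map of the sequence whose quotient is $C_0(S^*A(\Gamma_b(X)^{Fred}))$. That quotient is the commutative principal symbol algebra, so the connecting map receives only the class of $\sigma_b(D)$ lifted to $X_\cF$; it is blind to the indicial operators, and hence your $\sigma_{nc}(D)$ would depend only on $[\sigma_b(D)]$. This is incompatible with the identity $Ind_F^X([\sigma_{nc}(D)])=\mathrm{Ind}_{\mathrm{Fred}}(D)$ that you must prove: whenever $\partial X\neq\emptyset$, Proposition \ref{thmbcompact} shows that the connecting map $K_1(\cK_b(\partial X))\to K_0(\cK(X))\cong\ZZ$ is surjective, so there exist fully elliptic operators $D$ and $D+R$ with $R\in\Psi_b^{-\infty}(X)$ having the same principal symbol but different Fredholm indices; your construction would assign them the same class, and injectivity of $\sigma_{nc}$ would also fail. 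The construction has to use the full symbol data. For instance: choose a parametrix $Q$ with $1-DQ,\,1-QD\in\cK(X)$, lift the pair to $\Gamma_b(X)^{Fred}$, and note that the restrictions of the error terms to $T_{nc}X$ land in $C^*(T_{nc}X)$ precisely because full ellipticity forces $Q\vert_H=(D\vert_H)^{-1}$ exactly on each boundary hyperface; the associated graph (Cuntz) idempotent then yields a genuine difference class in $K_0(C^*(T_{nc}X))$ depending on the whole of $\sigma_F(D)$. This relative class is the noncommutative symbol of \cite{DLR} and \cite{NSS2}. Once this is repaired, the remainder of your outline (homotopy invariance, additivity, the two evaluations, and surjectivity by quantizing classes of $K_0(C^*(T_{nc}X))$) is plausible, though the bijectivity argument is still only a sketch of what is in fact the technical core of the cited proofs.
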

This is proved by the  method  leading  to  \cite[Theorem 4]{Savin2005} and \cite[Theorem 10.6]{DLR} exactly in the same way. Also, this homotopy classification appears in  \cite{NSS2}, in which the $K$-homology of a suitable {\sl dual manifold} is used  instead of the $K$-theory of the noncommutative tangent space.   
Previous related results appeared in \cite{LMNpdo} for differential operators and using different algebras to classify their symbols.

\noindent The construction of the various index maps above is summarized into the commutative diagram:
\begin{equation}\label{AnavsFredses}
\tiny{\xymatrix{
&K^0(\Gamma_b^F)\ar[rr]^-{i_F}\ar[ld]|-{e_1}&&K^0(\Gamma_b^{tan})\ar[rr]^-{r_\partial}\ar[ld]|-{e_1}&&K^0(\Gamma_b|_\partial)\ar[ld]|-{Id}\ar[dd]&\\
K^0(\intx\times\intx)\ar[rr]^-{i_0}&&K^0(\Gamma_b)\ar[rr]^-{r_b}&&K^0(\Gamma_b|_\partial)\ar[dd]&&\\
&K^1(\Gamma_b|_\partial)\ar@{.>}[uu]&&K^1(\Gamma_b^{tan})\ar@{.>}[ld]|-{e_1}\ar@{.>}[ll]^-{r_\partial}&&K^1(\Gamma_b^F)\ar@{.>}[ll]\ar[ld]|-{e_1}&\\
K^1(\Gamma_b|_\partial)\ar[uu]^-{\partial_1}\ar@{<.}[ru]|-{Id}&&K^1(\Gamma_b)\ar[ll]^-{r_b}&&K^1(\intx\times\intx)\ar[ll]^-{i_0}&&
}}
\end{equation}

\subsection{Fredholm perturbation property}

We are ready to define the Fredholm Perturbation Property \cite{NisGauge} and its stably homotopic version.
\begin{definition} Let $D\in \Psi_b^m(X)$ be elliptic. We say that $D$ satisfies: 
\begin{itemize}
\item  the {\it Fredholm Perturbation Property} $(\cF\cP)$ if there is   $R\in \Psi_b^{-\infty}(X)$ such that $D+R$ is  fully elliptic.  
\item the {\it stably homotopic Fredholm Perturbation Property} $(\cH\cF\cP)$ if there is a fully elliptic operator $D'$ with $[\sigma_b(D')]=[\sigma_b(D)]\in K_0(C^*({}^bTX))$.
\end{itemize}
\end{definition}
We also say that $X$ satisfies the   {\it (stably homotopic) Fredholm Perturbation Property}  if any elliptic $b$-operator on $X$ satisfies $((\cH)\cF\cP)$.

Property  $(\cF\cP)$ is of course stronger than property $(\cH\cF\cP)$.  In \cite{NisGauge}, Nistor characterized $(\cF\cP)$ in terms of the vanishing of an index in some particular cases. In  \cite{NSS2}, Nazaikinskii, Savin and Sternin characterized $(\cH\cF\cP)$
 for arbitrary manifolds with corners using an index map associated with their dual manifold construction. We now rephrase the result of \cite{NSS2} in terms of deformation groupoids. 
\begin{theorem}\label{AnavsFredthm1}
Let $D$ be an elliptic $b$-pseudodifferential operator on a compact manifold with corners $X$. Then  $D$ satisfies $(\cH\cF\cP)$ if and only if 
\(
 Ind_\partial([\sigma_b(D)])=0
\).\\
In particular if $D$ satisfies $(\cF\cP)$ then its analytic indicial index vanishes. 
\end{theorem}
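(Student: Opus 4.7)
The strategy is to reinterpret the boundary analytic index $Ind^\partial_X$ as a connecting homomorphism in a canonical six-term exact sequence attached to the noncommutative tangent space $T_{nc}X$, and then to read the statement as the exactness of this sequence combined with Proposition \ref{FullyAPS}.

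First, I would observe that the unit space $X_\partial = X\times\{0\}\cup \partial X\times (0,1)$ of $T_{nc}X$ contains $X\times\{0\}$ as a closed saturated subset, with open complement $\partial X\times (0,1)$. Restricting $T_{nc}X=\Gamma_b(X)^{tan}|_{X_\partial}$ to these pieces gives respectively ${}^bTX$ (at $t=0$) and $\Gamma_b(X)|_{\partial X}\times (0,1)$ (at $t\in (0,1)$). Applying the exact sequence (\ref{suiteres}) for groupoid $C^*$-algebras yields
\begin{equation}
0\longrightarrow C^*(\Gamma_b(X)|_{\partial X})\otimes C_0((0,1))\longrightarrow C^*(T_{nc}X)\stackrel{r_0}{\longrightarrow} C_0({}^bT^*X)\longrightarrow 0.
\end{equation}
Using the suspension isomorphism $K_1(A\otimes C_0((0,1)))\simeq K_0(A)$, the associated six-term exact sequence contains the piece
\begin{equation}
K_0(C^*(T_{nc}X))\stackrel{(r_0)_*}{\longrightarrow} K^0_{top}({}^bT^*X)\stackrel{\partial}{\longrightarrow} K_0(C^*(\Gamma_b(X)|_{\partial X})).
\end{equation}

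Second, I would identify $\partial$ with the boundary analytic index $Ind^\partial_X$ defined in Section \ref{subsectionAnalyticindexmorphism}. The point is that the Fredholm groupoid $\Gamma_b(X)^F$ and the tangent groupoid $\Gamma_b^{tan}$ restrict to $T_{nc}X$ over $X_\partial$, with $K$-contractible ideals (namely $C^*(\intx\times\intx\times (0,1])$ in (\ref{btangentsuite}), and similarly for $\Gamma_b^{tan}$), so the restriction maps in $K$-theory are isomorphisms. Naturality of the six-term exact sequences, together with the commuting diagram (\ref{AnavsFredses}), then gives $\partial=Ind^\partial_X$.

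Third, I would conclude by pure exactness. From $\partial=Ind^\partial_X$, the class $[\sigma_b(D)]\in K^0_{top}({}^bT^*X)$ satisfies $Ind^\partial_X([\sigma_b(D)])=0$ if and only if $[\sigma_b(D)]$ lies in the image of $(r_0)_*$. By Proposition \ref{FullyAPS}, the map $\sigma_{nc}:\mathrm{FE}(X)\to K_0(C^*(T_{nc}X))$ is an isomorphism and $(r_0)_*\circ\sigma_{nc}=[\sigma_b(\cdot)]$, so $\mathrm{Im}(r_0)_*=\{[\sigma_b(D')]: D'\text{ fully elliptic}\}$. This is exactly the statement that $D$ satisfies $(\cH\cF\cP)$. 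For the last assertion, if $R\in\Psi_b^{-\infty}(X)$ then $\sigma_b(D+R)=\sigma_b(D)$, so $(\cF\cP)\Rightarrow(\cH\cF\cP)\Rightarrow Ind^\partial_X([\sigma_b(D)])=0$.

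The main obstacle is step two: carefully identifying the abstract boundary map $\partial$ of the six-term exact sequence with $Ind^\partial_X=r_\partial\circ r_0^{-1}$, since the latter is built from the full tangent groupoid $\Gamma_b^{tan}$ whereas the former comes from the smaller $T_{nc}X$. The identification must proceed by a two-step naturality argument: first from $\Gamma_b^{tan}$ to $\Gamma_b^F$ (whose kernel is $K$-contractible), then from $\Gamma_b^F$ to $T_{nc}X$ (again $K$-contractible kernel). Carrying this out rigorously amounts to a careful diagram chase in (\ref{AnavsFredses}), ensuring that the boundary maps and restriction isomorphisms are compatible on the nose.
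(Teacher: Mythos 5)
Your proof is correct in substance and lives in the same circle of ideas (deformation groupoids, Proposition \ref{FullyAPS}, a six-term exact sequence), but it is routed differently from the paper's and is strictly longer. The paper applies the six-term sequence directly to $0\to C^*(\Gamma_b^{Fred}(X))\to C^*(\Gamma_b^{tan}(X))\to C^*(\Gamma_b(X)|_{\partial X})\to 0$ and reads off exactness at the \emph{middle} term: since $Ind^\partial_X=r_\partial\circ r_0^{-1}$ and $r_\partial$ is literally one of the arrows of that sequence, $Ind^\partial_X([\sigma_b(D)])=0$ iff $r_0^{-1}[\sigma_b(D)]\in \mathrm{Im}(i_F)$, and the commutative square involving the $K$-equivalence $r_F$ plus Proposition \ref{FullyAPS} converts this into $(\cH\cF\cP)$ --- no connecting homomorphism ever has to be computed. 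You instead pass to the extension $0\to C^*(\Gamma_b(X)|_{\partial X})\otimes C_0((0,1))\to C^*(T_{nc}X)\to C_0({}^bT^*X)\to 0$ and use exactness at $K^0_{top}({}^bT^*X)$, which obliges you to identify the abstract connecting map of that sequence with $Ind^\partial_X$. That identification is true --- it is the standard fact that the index morphism $e_1\circ e_0^{-1}$ of a deformation groupoid coincides, up to the suspension isomorphism and a sign, with the connecting map of the extension obtained by deleting the fibre at $t=1$ --- but it is the one delicate point of your argument, and your sketch of it is slightly miscalibrated: $C^*(\Gamma_b^{Fred}(X))\hookrightarrow C^*(\Gamma_b^{tan}(X))$ is an \emph{ideal} inclusion whose quotient $C^*(\Gamma_b(X)|_{\partial X})$ is not $K$-contractible, so it is not a ``quotient with $K$-contractible kernel'' step. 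The clean way to carry out your step two is to work inside $H:=\Gamma_b^{tan}(X)|_{X\times\{0\}\cup\partial X\times(0,1]}$, comparing the suspension ideal $C^*(\Gamma_b(X)|_{\partial X}\times(0,1))$ with the ($K$-contractible) cone ideal $C^*(\Gamma_b(X)|_{\partial X}\times(0,1])$ via naturality of connecting maps; this yields $\partial=\pm Ind^\partial_X$, the sign being irrelevant for the vanishing statement. Your use of Proposition \ref{FullyAPS} to identify $\mathrm{Im}\,(r_0)_*$ with the set of symbols of fully elliptic operators, and the final implication $(\cF\cP)\Rightarrow(\cH\cF\cP)$ via $\sigma_b(D+R)=\sigma_b(D)$, agree with the paper.
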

\begin{proof}
Note that the Fredholm and the tangent groupoids are related by the exact sequence
\begin{equation}\label{ex-seq:Fred-tan-partialb}
\xymatrix{
0\ar[r]&C^*(\Gamma_b^{Fred}(X))\ar[r]^-{i_F}&C^*(\Gamma_b^{tan}(X))\ar[r]^-{r_\partial}&C^*(\Gamma_b(X)_{\partial X})\ar[r]&0
}
\end{equation}
Then Proposition \ref{FullyAPS}, together with this  exact sequence and the commutative diagram: 
 \begin{equation}
 \xymatrix{
 K_0(C^*(\Gamma_b^F)) \ar[d]_{i_F}\ar[r]^{\simeq}_{r_F} & K_0(C^*(T_{nc}X)) \ar[d]_{r_0} \\
  K_0(C^*(\Gamma_b^{tan}))  \ar[r]^{\simeq}_{r_0} & K_0(C^*({}^bTX)) 
 } 
\end{equation}
 yields the result. 
\end{proof}

Loosely speaking, this theorem tells us that the $K$-theory of $\Gamma_b(X)_{\partial X}$, or equivalently the one of $\Gamma_b(X)$ as we shall see later, is the receptacle for the obstruction to Fredholmness of elliptic symbols in the $b$-calculus. This is why we now focus on the understanding of these $K$-theory groups. If the result is well kwown in codimension less or equal to $1$, the general case is far from understood.  Meanwhile, we will also clarify the equivalent role played by $\Gamma_b(X)$ and  $\Gamma_b(X)_{\partial X}$.

\section{The conormal homology of a manifold with corners}\label{seccnhom}
 
 In all this section,  $X$ is a manifold with embedded corners of codimension $d$, whose connected hyperfaces $H_1,\ldots,\ldots H_N$ are provided with defining functions $r_1,\ldots,r_N$.
 
\subsection{Definition of the homology}\label{subsec:def-cn}
The one form  $e_j=dr_j$ trivialises  the conormal bundle of $H_j$ for any $1\le j\le N$.   By convention,  $p$-uples of integers $I=(i_1,\ldots,i_p)\in \NN^p$ are always labelled so that $1\le i_1< \ldots<i_p \le N$. Let  $I$ be a  $p$-uple, set 
\begin{equation}
 H_I = r^{-1}_I(\{0\}) = H_{i_1}\cap\ldots\cap H_{i_p}.
\end{equation}
and note $c(I)$ the set of open connected faces of codimension $p$ included in $H_I$. Also, we denote by $e_I$ the exterior product 
\begin{equation}
  e_I = e_{i_1}. e_{i_2}.\ldots . e_{i_p}.
\end{equation}
Let  $f$ be a face of codimension $p$ and $I$ the  $p$-uple such that $f\in c(I)$. The  conormal bundle $N(f)$ of  $f$ has a global basis given by the sections $e_j$, $j\in I$, and its  orientations are identified with $\pm e_I$.
For any integer $0\le p\le d $, we denote by $C_p(X)$ the free $\ZZ$-module  generated by 
\begin{equation}
 \{ f\otimes \varepsilon \ ;\ f\in F_p,\ \varepsilon \text{ is an orientation of } N(f)  \}.
\end{equation}
 Let $f\in F_p$,  $\epsilon_f$ an orientation of  $N(f)$ and  $g\in F_{p-1}$  such that $f\subset \overline{g}$.  The face $f$ is  characterized in $\overline{g}$ by the vanishing of  a defining function  $r_{i_{(g,f)}}$. Then  the contraction  $ e_{i_{(g,f)}}\lrcorner \epsilon_f$ is an orientation of $N(g)$. Recall that the contraction $\lrcorner$ is defined by 
\begin{equation}
 e_i\lrcorner e_I = 
 \begin{cases}
      0 & \text{ if } i\not\in I \\ 
      (-1)^{j-1} e_{I\setminus\{i\}} & \text{ if } i \text{ is the }  j^{\text{th}} \text{ coordinate  of } I .
  \end{cases}
 \end{equation} 
We then define  $\delta_p : C_p(X)\to C_{p-1}(X)$ by 
\begin{equation}\label{diffpcn1}
   \delta_p(f\otimes \varepsilon_f) = \sum_{\substack{g\in F_{p-1},  \\ f\subset\overline{g}}} g\otimes e_{i_{(g,f)}}\lrcorner\varepsilon_f.
\end{equation}
It is not hard to check directly that $(C_*(X),\delta_*)$ is a differential complex. Actually, $\delta_*$  is the  component of  degree $-1$  of another natural differential map $\delta^{\pcn}=\sum_{k\ge 0}\delta^{2k+1}$, which eventually produces a quasi-isomorphic differential complex. Details are provided in Section \ref{appendix}. 

We define the {\it conormal homology} of $X$ as the homology of $(C_*(X),\delta_*)$, and we note 
\begin{equation}
  H^{\cn}_p(X) := H_p(C_*(X),\delta_*).
 \end{equation}
This homology was first considered in \cite{Bunke}, in a slightly different but equivalent way. 
Also, the graduation of the conormal homology into even and odd degree, called here {\it periodic conormal homology},  will be used and we note 
 \begin{equation}
 H^{\pcn}_{0}(X)=\oplus_{p\ge 0} H^{\cn}_{2p}(X) \text{ and }H^{\pcn}_{1}(X)=\oplus_{p\ge 0} H^{\cn}_{2p+1}(X).
\end{equation}

\subsection{Examples}\label{exples-cn}
The determination of  the  groups $ H^{\cn}_*(X)$ is completely elementary in all concrete cases. In the following examples, it is understood that faces $f$ arise with the orientation given by $e_I$ if $f\in c(I)$.
\begin{example}\ 
\begin{itemize}
\item Assume that $X$ has no boundary. Then   $H^{\pcn}_{0}(X)=H^{\cn}_{0}(X)\simeq\ZZ$, $H^{\pcn}_{1}(X)=0$.

\item Assume that $X$ has a boundary with $n$ connected components. Then $H^{\pcn}_{0}(X)=0$ and  $H^{\pcn}_{1}(X)=H^{\cn}_{1}(X)\simeq \ZZ^{n-1}$. More precisely, if  we set $F_1=\{f_1,...,f_n\}$ then $\{f_1-f_2,f_2-f_3,...,f_{n-1}-f_n\}$ provides a basis of $\ker \delta_1$.
  
\item Assume that $X$ has codimension $2$ and that $\partial X$ is connected. Then $H^{\pcn}_{0}(X)=H^{\cn}_{2}(X)=\ker \delta_2\simeq\ZZ^k$, where all nonnegative integers $k$ can arise. For instance, consider the unit closed ball $B$ in $\RR^3$, cut $k+1$ small disjoint disks out of its boundary and glue two copies of such spaces  along the pairs of cut out disks. We get a space $X$ satisfying the statement: the boundaries $s_0,\ldots, s_{k}$ of the original disks provide a basis of $F_2$ and the family $s_0-s_j$, $1\le j\le k$ a basis of $\ker \delta_2$.  Finally, $([0,+\infty))^2$ provides an example with $k=0$.

\item Consider the cube  $X=[0,1]^3$. 
\begin{enumerate}
 \item We have $H^{\pcn}_{0}(X)=0$ and  $H^{\pcn}_{1}(X)=H^{\cn}_3(X)\simeq \ZZ$.
 \item Remove a small open cube into the interior of $X$ and call the new space $Y$. Then 
 \[
   H^{\pcn}_{0}(Y)=0 \text{ and } H^{\pcn}_{1}(Y)= H^{\cn}_{3}(Y)\oplus  H^{\cn}_{1}(Y) \simeq \ZZ^2\oplus \ZZ.
 \]
  \item Remove a small open ball into the interior of $X$ and call the new space $Z$. Then 
  \[
   H^{\pcn}_{0}(Z)=0 \text{ and } H^{\pcn}_{1}(Y)= H^{\cn}_{3}(Y)\oplus  H^{\cn}_{1}(Y) \simeq \ZZ\oplus \ZZ.
 \]
\end{enumerate}
% \red{ JM : Il vaudrait mieux garder tes calculs ci dessous pour expliquer les r\'eponses ci-dessus. }
% \begin{enumerate}
% \item For $S_0=[0,1]^2$ $\kappa_1(S_0)=3$ and $\kappa_2(S_0)=1$. A generator for $\ker \delta_2$ can be indeed be constructed from the 4 2-codimensional corners.
% \item Take $S_0$ as above and consider $S_1:=S_0\setminus (\frac{1}{4},\frac{3}{4})^2$. Then $\kappa_1(S_1)=7$ and $\kappa_2(S_1)=2$.
% \item Take $S_0$ as above and consider $S_2:=S_0\setminus D((\frac{1}{2},\frac{1}{2}), r=\frac{1}{4})$. Then $\kappa_1(S_1)=4$ and $\kappa_2(S_1)=1$.
% \item Let $C_0=[0,1]^3$, then $\kappa_1(C_0)=5$, $\kappa_2(C_0)=7$, $\kappa_3(C_0)=1$. 
% \item Take $C_0$ as above and consider $C_1:=C_0\setminus (\frac{1}{4},\frac{3}{4})^3$. Then $\kappa_1(C_1)=11$, $\kappa_2(C_1)=14$ and $\kappa_3(C_1)=2$.
% \item Take $C_0$ as above and consider $C_2:=C_0\setminus D((\frac{1}{2},\frac{1}{2}, \frac{1}{2}), r=\frac{1}{4})$. Then $\kappa_1(C_2)=6$, $\kappa_2(C_2)=7$ and $\kappa_3(C_2)=1$.
% \end{enumerate}
 \end{itemize}
\end{example}

\subsection{Long exact sequence in conormal homology}
 
We define a filtration of   $X$ by  open submanifolds with corners by setting:
\begin{equation}\label{filtration-by-codimension}
   X_m = \bigcup_{f\in F_k, \ k\le m}f, \qquad 0\le m\le d. 
\end{equation}
This leads to differential complexes  $(C_*(X_m),\delta)$ for $0\le m\le d$.
We can also filtrate the  differential complex $(C_*(X),\delta)$ by the codimension of faces: 
\begin{equation}
 F_m(C_*(X))= \bigoplus_{k=0}^m C_k(X).
\end{equation}
There is an obvious identification $C_*(X_m)\simeq F_m(C_*(X))$ and we thus consider  $(C_*(X_m),\delta)$ as a subcomplex of $(C_*(X),\delta)$, with  quotient complex denoted by $(C_*(X,X_m),\delta)$. The quotient module is also naturally embedded in $C_*(X)$:
\begin{equation}
 C_*(X,X_m) = C_*(X)/C_*(X_m)\simeq  \bigoplus_{k=m+1}^d C_k(X)\subset C_*(X).
\end{equation}
The embedding,  denoted by $\rho$, is a section of the quotient map. 
The short exact sequence:  
\begin{equation}\label{short-seq-cn}
 \xymatrix{
 0 \ar[r] & C_*(X_m)\ar[r]  &  C_*(X) \ar[r]  &  C_*(X,X_m)\ar[r]  & 0
 }
\end{equation}
induces a long exact sequence in conormal homology:
\begin{equation}\label{long-seq-cn}
{\small
\xymatrix{
\cdots\ar[r]^-{\partial_{p+1}}&H_{p}^{\cn}(X_m)\ar[r]&H_{p}^{\cn}(X)\ar[r]&H_{p}^{\cn}(X, X_m)\ar[r]^-{\partial_p}& H_{p-1}^{\cn}(X_m)\ar[r]& \cdots
}}
\end{equation}
and we need to precise the connecting homomorphism. 
\begin{proposition}\label{cnhomologyconnectingmap}
Let $[c]\in H_p^{\cn}(X,X_m)$. Then 
\begin{equation}
 \partial_p[c]= [\delta(\rho(c))].
\end{equation}
\end{proposition}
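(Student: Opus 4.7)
The statement is the explicit formula for the connecting homomorphism in the long exact sequence in homology attached to a short exact sequence of chain complexes, applied to (\ref{short-seq-cn}). The plan is therefore to run the snake-lemma construction of $\partial_p$ using the specific set-theoretic section $\rho$ provided at the level of graded modules.

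First I would recall how $\partial_p$ is built from the short exact sequence
\[
 0\longrightarrow C_*(X_m)\xrightarrow{i} C_*(X) \xrightarrow{\pi} C_*(X,X_m)\longrightarrow 0.
\]
Given a cycle $c\in C_p(X,X_m)$, one picks any lift $\tilde c \in C_p(X)$ with $\pi(\tilde c)=c$. Since $\pi\circ\delta=\bar\delta\circ\pi$ and $\bar\delta(c)=0$, one gets $\pi(\delta \tilde c)=0$, so $\delta \tilde c$ lies in $\ker\pi=i(C_{p-1}(X_m))$; being sent to $0$ by $\delta$ (by $\delta^2=0$), it is a cycle there and $\partial_p[c]:=[\delta \tilde c]\in H_{p-1}^{\cn}(X_m)$. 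A routine verification shows the resulting class is independent both of the choice of lift $\tilde c$ and of the representative of $[c]$.

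Next I would observe that $\tilde c:=\rho(c)$ is an admissible lift. Indeed, by the very definition of $\rho$ as the embedding corresponding to the graded module decomposition $C_*(X)=C_*(X_m)\oplus\bigoplus_{k\ge m+1}C_k(X)$, one has $\pi\circ\rho=\id$, hence $\pi(\rho(c))=c$. It is worth emphasizing that $\rho$ is only a section in the category of graded $\ZZ$-modules and not of chain complexes: it does not commute with $\delta$ in general, which is precisely what allows $\delta(\rho(c))$ to be nonzero and to represent a nontrivial connecting class.

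Plugging $\tilde c=\rho(c)$ into the general recipe then yields $\partial_p[c]=[\delta(\rho(c))]$, which is the claim. The proposition therefore reduces to identifying $\rho$ as a valid graded-module section of $\pi$ together with the verification that $\delta(\rho(c))$ automatically lands in the subcomplex $C_{p-1}(X_m)$; no serious obstacle is expected, since the content is essentially a rephrasing of the standard snake-lemma formula in terms of the canonical splitting of $\pi$ furnished by the codimension-filtration of $C_*(X)$.
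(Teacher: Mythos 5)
Your proposal is correct and follows essentially the same route as the paper: the paper's proof likewise takes $\rho(c)$ as the lift of the cycle $c$, notes that its boundary consists of faces contained in $X_m$, and invokes the standard snake-lemma description of the connecting homomorphism. Your write-up merely spells out in more detail the well-definedness and the fact that $\rho$ is only a section of graded modules, which is exactly the point the paper leaves implicit.
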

\begin{proof}
 Since $c$ is by assumption a cycle in $(C_*(X,X_m),\delta)$, the chain $\rho(c)$ has a boundary made of faces contained in $X_m$. The result follows. 
\end{proof}
\begin{remarks}\ \\
$\bullet$ We can replace $X$ by $X_l$ and quotient the exact sequence \eqref{short-seq-cn} by $C_*(X_q)$ for any integers $l,m,q$ such that $0\le q\le m\le l\le d$.  This leads to long exact sequences:
  \begin{equation}\label{long-seq-cn-variant}
{\small
\xymatrix{
\cdots\ar[r]^-{\partial}&H_{p}^{\cn}(X_{m},X_q)\ar[r]&H_{p}^{\cn}(X_l,X_q)\ar[r]&H_{p}^{\pcn}(X_l, X_{m})\ar[r]^-{\partial}& H_{p-1}^{\pcn}(X_{m},X_q)\ar[r]& \cdots
}}
\end{equation}
whose connecting homomorphisms are  again  given by the  formula of Proposition \ref{cnhomologyconnectingmap}. 

\noindent $\bullet$ If we split  the conormal homology into even and odd periodic groups, then the long exact sequence \eqref{long-seq-cn} becomes a six term exact sequence:
\begin{equation}\label{six-seq-cn}
\xymatrix{
  H^{\pcn}_0(X_m)\ar[r]&   H^{\pcn}_0(X) \ar[r] &  H^{\pcn}_0(X,X_m) \ar[d]^-{\partial^0} \\
   H^{\pcn}_1(X,X_m)\ar[u]^{\partial^1} & H^{\pcn}_1(X) \ar[l]& H^{\pcn}_1(X_m)\ar[l].
}
\end{equation}
where $\partial^0,\partial^1$ are given by the direct sum in even/odd degrees of the maps $\partial_*$  of Proposition \ref{cnhomologyconnectingmap}. 

\noindent $\bullet$ We can replace $X_m$ in  the exact sequence \eqref{short-seq-cn}  by an open saturated submanifold $U\subset X_m$, that is, an open subset of $X$  consisting of an union of faces.  This gives in the same way a subcomplex $(C_*(U),\delta)$ of $(C_*(X),\delta)$ and a section  $\rho : C_*(X,U) \to C_*(X)$ allowing to state Proposition \ref{cnhomologyconnectingmap} verbatim.  More generally, if $U$ is any open submanifold of $X$ and $\widetilde{U}$ denotes the smallest open saturated submanifold containing $U$, then any face $f$ of $U$ is contained in a unique face $\widetilde{f}$ of $X$ and an oriention of $N(f)$ determines an orientation of $N(\widetilde{f})$. This gives rise to a quasi-isomorphism $C_*(U)\to C_*(\widetilde{U})$.
\end{remarks}
Finally, assume that $d\ge 1$. Since $X$ is connected, the map $\delta_1:C_1(X)\to C_0(X)$ is surjective, which implies by Proposition \ref{cnhomologyconnectingmap} the surjectivity of the connecting homomorphism $\partial^1 : H_1^{\pcn}(X,X_0)\to H_0^{\pcn}(X_0)$. This fact and $H_1^{\pcn}(X_0)=0$ gives, using \eqref{six-seq-cn},  the useful corollary:
\begin{corollary}\label{corAdA0isoPCN}
For any connected manifold with corners $X$ of codimension $d\geq 1$ the canonical morphism $H_0^{\pcn}(X)\to H_0^{\pcn}(X,X_0)$ is an isomorphism.
\end{corollary}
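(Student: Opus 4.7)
The plan is to directly apply the six-term exact sequence \eqref{six-seq-cn} with $m=0$, after computing the conormal homology of $X_0$, which is the simplest possible input.

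First I would observe that $X_0$ is the (connected, since $X$ is connected) open interior of $X$, viewed as a manifold with corners of codimension $0$. So $C_k(X_0)=0$ for $k\geq 1$ and $C_0(X_0)\simeq \ZZ$, generated by $\intx$ equipped with its canonical (empty) co-orientation. Consequently
\[
H_0^{\pcn}(X_0)\simeq \ZZ \quad \text{and}\quad H_1^{\pcn}(X_0)=0.
\]
With this, the six term sequence \eqref{six-seq-cn} for the pair $(X,X_0)$ becomes
\[
\xymatrix{
\ZZ\ar[r]^-{\alpha}& H^{\pcn}_0(X)\ar[r]^-{\beta} & H^{\pcn}_0(X,X_0)\ar[d]^-{\partial^0}\\
H^{\pcn}_1(X,X_0)\ar[u]^-{\partial^1}& H^{\pcn}_1(X)\ar[l]& 0\ar[l]
}
\]
so $\partial^0=0$ and $\beta$ is automatically surjective. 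It remains to prove that $\beta$ is injective, which by exactness amounts to showing that $\partial^1$ is surjective (equivalently, $\alpha=0$).

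For surjectivity of $\partial^1$, by Proposition \ref{cnhomologyconnectingmap} and the very definition of the section $\rho$, any element of $C_1(X)$ projects to a class in $C_1(X,X_0)$ which is automatically a cycle (the quotient vanishes in codimension $0$), and whose image under $\partial^1$ is the class of $\delta_1(c)$ in $H_0^{\pcn}(X_0)\simeq\ZZ$. Thus it suffices to show that $\delta_1:C_1(X)\to C_0(X)=\ZZ\cdot\intx$ is surjective. Since $d\geq 1$, there exists at least one hyperface $H_i\in F_1$; by the formula \eqref{diffpcn1}, $\delta_1(H_i\otimes e_i)=\pm\intx$, which generates $C_0(X)$. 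Hence $\delta_1$ is surjective, $\partial^1$ is surjective, and therefore $\alpha=0$ and $\beta$ is injective.

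No step here is really a serious obstacle; the only point to be careful about is the bookkeeping of co-orientations to verify that the generator of $H_0^{\pcn}(X_0)\simeq\ZZ$ is indeed hit by $\delta_1$ applied to a codimension-one generator, which is immediate from the contraction formula. The argument uses the assumption $d\geq 1$ exactly once, to produce that hyperface $H_i$, and uses connectedness of $X$ to ensure $C_0(X_0)\simeq\ZZ$ rather than a larger free module.
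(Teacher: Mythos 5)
Your proof is correct and follows essentially the same route as the paper: the authors likewise invoke the six-term sequence \eqref{six-seq-cn} with $m=0$, note $H_1^{\pcn}(X_0)=0$, and deduce surjectivity of $\partial^1$ from the surjectivity of $\delta_1:C_1(X)\to C_0(X)$ (which uses connectedness and $d\geq 1$ exactly as you do). Your write-up merely makes explicit the bookkeeping that the paper leaves implicit.
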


\subsection{Torsion free in low codimensions}
Here we will show that up to codimension 2 the conormal homology groups (and later on the K-theory groups) are free abelian groups.

\begin{lemma}\label{lem:Hcn1}
  Let $X$ be of arbitrary codimension and assume that $\partial X$ has $l$ connected components. Then $H^{\cn}_{1}(X)\simeq\ZZ^{l-1}$.
\end{lemma}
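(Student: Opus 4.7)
The plan is to reduce $H_1^{\cn}(X)$ to the reduced $0$-th homology of the \emph{intersection graph} of the hyperfaces of $X$. Since $X$ is connected and each hyperface $H_i$ is connected by hypothesis, we have $C_0(X)\simeq\ZZ$ with one generator given by the interior, and $C_1(X)\simeq\ZZ^N$ with standard basis $\overline{H_i}:=H_i^\circ\otimes e_i$, where $H_i^\circ$ is the unique element of $c(\{i\})$. The formula \eqref{diffpcn1} immediately gives $\delta_1(\overline{H_i})=1$ for every $i$ under the natural identification $C_0(X)\simeq\ZZ$, so $\delta_1$ is the augmentation map and $\ker\delta_1\simeq\ZZ^{N-1}$.

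I then compute $\delta_2$ on generators. For $i<j$ with $H_i\cap H_j\neq\emptyset$ and any $h\in c(\{i,j\})$, the only codimension $1$ faces whose closure contains $h$ are $H_i^\circ$ and $H_j^\circ$: a third hyperface $H_k$ containing $h$ would place $h$ into a stratum of codimension $\geq 3$, contradicting that $h$ is open of codimension exactly $2$. The defining functions of $h$ inside $\overline{H_i^\circ}$ and $\overline{H_j^\circ}$ are $r_j$ and $r_i$ respectively, and the contraction rule gives
\begin{equation}
\delta_2(h\otimes e_ie_j)=\overline{H_j}-\overline{H_i}.
\end{equation}
Thus $\im\delta_2\subset\ker\delta_1$ is the subgroup generated by the differences $\overline{H_j}-\overline{H_i}$ over pairs with $H_i\cap H_j\neq\emptyset$ (different connected components of the same intersection produce the same relation and do not enlarge the image).

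Let $\Gamma$ denote the graph with vertex set $\{H_1,\ldots,H_N\}$ and an edge $\{i,j\}$ for each pair with $H_i\cap H_j\neq\emptyset$. Under the identification of $C_1(X)$ with the free abelian group on the vertices of $\Gamma$, the subgroup $\ker\delta_1$ is exactly the augmentation kernel of $\Gamma$ and $\im\delta_2$ is exactly the image of the simplicial boundary $\partial_1^\Gamma$. It follows that $H_1^{\cn}(X)\simeq\widetilde{H}_0(\Gamma)\simeq\ZZ^{c-1}$, where $c$ is the number of connected components of $\Gamma$.

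The only step with genuine topological content is the identification $c=l$. Since each $H_i$ is path-connected and a nonempty intersection $H_i\cap H_j$ furnishes a common point, any edge-path between $H_i$ and $H_j$ in $\Gamma$ lifts to a continuous path in $\partial X=\bigcup_k H_k$; conversely, any continuous path in $\partial X$ admits a decomposition into finitely many sub-arcs each lying in a single $H_k$, with transitions occurring in pairwise intersections, yielding an edge-path in $\Gamma$. This gives $c=l$ and hence $H_1^{\cn}(X)\simeq\ZZ^{l-1}$. I expect this last identification to be the main (though mild) obstacle, the rest being routine computations with the explicit differential.
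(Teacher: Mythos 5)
Your proof is correct and is essentially the paper's argument in different packaging: the paper likewise observes that $\delta_2$ of a codimension-two face is the (signed) difference of the two adjacent codimension-one faces, kills differences of hyperfaces lying in the same boundary component by chains of pairwise-intersecting hyperfaces, and proves independence of the cross-component classes via projections onto each component of $\partial X$ --- which is exactly your computation of $\widetilde{H}_0$ of the intersection graph $\Gamma$. One small caution on your last step: a continuous path in $\partial X$ need not decompose into finitely many sub-arcs each contained in a single $H_k$ (it may oscillate between hyperfaces), but the identification $c=l$ follows at once from the observation that for each component $C$ of $\Gamma$ the union $\bigcup_{k\in C} H_k$ is closed, connected, and disjoint from the analogous unions for the other components, so these unions are precisely the connected components of $\partial X$.
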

\begin{proof}
  For any face $f$, denote by $cc(f)$ the connected component of  $\partial X$ containing $f$. It is obvious that $\ker \delta_1$ is generated by the differences $f-g$ where $f,g$ run through $F_1$. Let $f,g\in F_1$ such that $cc(f)=cc(g)$. Then  there exist $f_{0},\ldots,f_{l}\in F_1$  such that $f=f_0$, $g=f_l$ and  $\overline{f_{i}}\cap \overline{f_{i+1}}\not=\emptyset$ for any $i$. Therefore for any $i$, there exists $ f_{i,i+1}\in F_2$ such that  $\delta_2( f_{i,i+1})=f_{i}-f_{i+1}$, hence $f-g = \delta_2(\sum   f_{i,i+1})$ is a boundary in conormal homology. 

Now assume that $cc(f)\not=cc(g)$.  By the previous discussion, we also have $[f-g] =[f'-g']\in H^{\cn}_1(X)$ for any $f',g'\in F_1$ such that  $f'\subset cc(f)$ and $g'\subset cc(g)$. Therefore, pick up one hyperface in each connected component of $\partial X$, call them $f_1,\ldots,f_l$, and set $\alpha_i=[f_1-f_i]\in H^{\cn}_1(X)$ for $i\in\{2,\ldots, l\}$. It is obvious that $(\alpha_i)_{2\le i\le l}$ generates $ H^{\cn}_1(X)$. So, consider integers $x_2,\ldots,x_l$ such that 
\[
 \sum_{i=2}^l x_i\alpha_i = 0.
\]
In other words, there exists $x\in C_2(X)$ such that 
\begin{equation}\label{free-basis}
   (\sum_{i=2}^l x_i)f_1 - \sum_{i=2}^l x_if_i = \delta_2(x).
\end{equation}
For any $p\ge 1$ and $2\le j\le l$ denote by $\pi_{j}: C_{p}(X)\to C_{p}(X)$ the map defined by $\pi_j(h)=h$ if $h\subset cc(f_j)$ and  $\pi_j(h)=0$ otherwise. All $\pi_{i}$s commute with $\delta_*$, hence \eqref{free-basis} gives
\[
 \forall 2\le j\le l, \quad x_j f_j = \delta_2(\pi_j(x)). 
\]
Since $\delta_1(f_j)= \overset{\circ}{X}\not=0$, we conclude $x_j=0$ for all $j$.  
\end{proof}

\begin{theorem}\label{freePCN}
Let assume that  $X$ is connected and has codimension $d\le 2$. Then $H_*^{\pcn}(X)$ is a free abelian group. 
\end{theorem}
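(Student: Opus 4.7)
The plan is to show freeness directly in each of the three degrees where $C_p(X)$ is nonzero, namely $p \in \{0,1,2\}$ under the assumption $d \leq 2$. Under this hypothesis one has $H_0^{\pcn}(X) = H_0^{\cn}(X) \oplus H_2^{\cn}(X)$ and $H_1^{\pcn}(X) = H_1^{\cn}(X)$, so freeness of the three conormal groups immediately implies freeness of both periodic groups.

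The top degree is essentially automatic: $H_2^{\cn}(X) = \ker(\delta_2 \colon C_2(X) \to C_1(X))$ is a subgroup of the finitely generated free abelian group $C_2(X)$, hence itself free abelian. The middle degree is exactly the content of the already-proven Lemma \ref{lem:Hcn1}, which yields $H_1^{\cn}(X) \simeq \ZZ^{l-1}$, where $l$ is the number of connected components of $\partial X$. For degree zero there are two subcases: if $d=0$ then the complex reduces to $C_0(X) = \ZZ$ in degree zero and $H_0^{\cn}(X) = \ZZ$; if $d \geq 1$ then $\partial X$ is nonempty, and for any hyperface $f$ the element $\delta_1(f)$ equals $\pm \intx$, the unique generator of $C_0(X) = \ZZ$ (up to a sign depending on the chosen orientation of $N(f)$), so $\delta_1$ is surjective and $H_0^{\cn}(X) = 0$. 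In all three degrees the group is free.

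The only step that truly requires work is degree one, and it has already been carried out in Lemma \ref{lem:Hcn1}; the other two degrees fall to the subgroup-of-free-module principle and to the connectedness of $X$ respectively. There is therefore no real obstacle to the proof once $d \leq 2$, and this is exactly why the hypothesis is essential: in higher codimension the intermediate groups $H_p^{\cn}(X)$ with $1 < p < d$ are neither top nor covered by the codimension-one counting argument of Lemma \ref{lem:Hcn1}, and torsion can genuinely appear there, as the authors warn in the introduction.
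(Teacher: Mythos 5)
Your proof is correct and follows essentially the same route as the paper's: degree $1$ is handled by Lemma \ref{lem:Hcn1}, degree $2$ by observing that $H_2^{\cn}(X)=\ker\delta_2$ is a subgroup of a finitely generated free abelian group (which is exactly what underlies the paper's codimension-two example in Section \ref{exples-cn}), and degree $0$ by surjectivity of $\delta_1$ when $\partial X\neq\emptyset$. The only cosmetic difference is that the paper cites its worked examples where you argue directly; your degree-$2$ step is in fact stated slightly more generally, since the cited example assumes $\partial X$ connected.
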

\begin{proof}
This is essentially a compilation of previous examples and computations. The first two cases in Example \ref{exples-cn} give the result for $d=0$ and $d=1$. If $X$ is of codimension $2$, then the third case in Example \ref{exples-cn} says that $H^{\pcn}_0(X)$ is free. In codimension $2$ again, we have $H^{\pcn}_1(X) = H^{\cn}_1(X)$, hence we are done by Lemma  \ref{lem:Hcn1}.
\end{proof}

\begin{remark}
If $\mathrm{codim}(X)=3$, then $H_1^{\pcn}(X)=H^{\cn}_1(X)\oplus H^{\cn}_3(X)$. Since $H^{\cn}_3(X)=\ker \delta_3$, Lemma  \ref{lem:Hcn1} also gives that $H_1^{\pcn}(X)$ is free. The combinatorics needed to prove that  $ H^{\cn}_2(X)$ -and  therefore $ H^{\pcn}_0(X)$- is free are much more involved. The torsion of conormal homology for manifolds of arbitrary codimension  will be studied   somewhere else.   
\end{remark}
%%%%%%%%%%%%%%%%%%%%%%%%%%%%%%%%%%%%%%%%%%%%%%%%%%%%%%%%%%%%%%%%%%%%%%%%%%%
%%%%%%%%%%%%%%%%%%%%%%%%%%%%%%%%%%%%%%%%%%%%%%%%%%%%%%%%%%%%%%%%%%%%%%%%%%%%%%%%%
%%%%%%%%%%%%%%%%%%%%%%%%%%%%%%%%%%%%%%%%%%%%%%%%%%%%%%%%%%%%%%%%%%%%%%%%%%%

\subsection{K\"unneth Formula for Conormal homology}

Taking advantage of the previous paragraph, we consider a product  $X=X_1\times X_2$ of two manifolds with corners, one of them being of codimension $\le 2$. It is understood that the defining functions used for $X$ are obtained by pulling back the ones used for $X_1$ and $X_2$. The tensor product $(\widehat{C}_*,\widehat{\delta})$ of the conormal complexes of $X_1$ and $X_2$ is given by 
\begin{equation}
 \widehat{C}_p=\bigoplus_{s+t=p} C_s(X_1)\otimes C_t(X_2)\quad  \text{ and } \quad\widehat{\delta}(x\otimes y) = \delta(x)\otimes y +(-1)^{t} x\otimes \delta(y)
\end{equation}
where $x\in C_t(X_1)$ in the second formula. We have an isomorphism of differential complexes:
\begin{equation}
  (\widehat{C}_*,\widehat{\delta})\simeq (C_*(X),\delta).
\end{equation}
It is given by the map
\begin{equation}
\Psi_p: \widehat{C}_p=\bigoplus_{s+t=p} C_s(X_1)\otimes C_t(X_2) \longrightarrow C_p(X)
\end{equation}
 defined by: 
\begin{equation}\label{Kunnethchainiso}
 (f\otimes \epsilon_{f})\otimes (g\otimes \epsilon_{g}) \longmapsto  (f\times g)\otimes \epsilon_{f}\cdot  \epsilon_{g},
\end{equation}
where we did not distinguish   differential forms on $X_j$ and their pull-back to $X$ via the canonical projections and $\cdot$ denotes again the exterior product. Since $H^{\cn}_*(X_j)$ is torsion free for $j=1$ or $2$ by assumption, we get by K\"unneth Theorem: 
\begin{equation}
 H_p(\widehat{C}_* , \hat{\delta}) = \bigoplus_{s+t=p}H_s^{\cn}(X_1)\otimes H_t^{\cn}(X_2).
\end{equation}
Therefore: 
\begin{proposition}[K\"unneth Formula]\label{Kunneth}
Assume that $X=X_1\times X_2$ with one factor at least of codimension $\le 2$. Then we have:
\begin{equation}
H_0^{\pcn}(X)\simeq H_0^{\pcn}(X_1)\otimes H_0^{\pcn}(X_2)\oplus H_1^{\pcn}(X_1)\otimes H_1^{\pcn}(X_2),
\end{equation}
\begin{equation}
H_1^{\pcn}(X)\simeq H_0^{\pcn}(X_1)\otimes H_1^{\pcn}(X_2)\oplus H_1^{\pcn}(X_1)\otimes H_0^{\pcn}(X_2)
\end{equation}
\end{proposition}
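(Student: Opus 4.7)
The plan is to construct the explicit isomorphism $\Psi_*$ announced just before the proposition, verify it is a chain isomorphism, and then invoke the classical algebraic K\"unneth theorem using the torsion-freeness provided by Theorem \ref{freePCN}.

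First I would check that $\Psi_p$ is a well-defined $\ZZ$-linear bijection on generators. Because the defining functions of $X=X_1\times X_2$ are the pullbacks of those of $X_1$ and $X_2$, a face of $X$ of codimension $p$ is uniquely of the form $f\times g$ with $f\in F_s(X_1)$, $g\in F_t(X_2)$ and $s+t=p$. At any point of $f\times g$ the conormal bundle splits canonically as $N(f)\oplus N(g)$, so an orientation of $N(f\times g)$ is identified (via the convention that the defining functions of $X_1$ are labelled before those of $X_2$) with the exterior product $\varepsilon_f\cdot\varepsilon_g$. This makes the map \eqref{Kunnethchainiso} a bijection between generators of $\widehat{C}_p$ and $C_p(X)$.

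Next, the key point is the compatibility $\Psi\circ\widehat{\delta}=\delta\circ\Psi$. Fix generators $f\otimes\varepsilon_f$ of codimension $s$ and $g\otimes\varepsilon_g$ of codimension $t$, written with multi-indices $I$ in $X_1$ and $J$ in $X_2$. A face of codimension $s+t-1$ containing $f\times g$ in its closure is either $f'\times g$, with $f\subset\overline{f'}$ and $\mathrm{codim}(f')=s-1$, or $f\times g'$, with $g\subset\overline{g'}$ and $\mathrm{codim}(g')=t-1$. In the formula \eqref{diffpcn1} applied to $(f\times g)\otimes \varepsilon_f\cdot\varepsilon_g$, contracting an $X_1$-index $e_i$ with $i\in I$ produces the sign $(-1)^{k-1}$, where $k$ is the position of $i$ in $I$, matching $\delta(f\otimes\varepsilon_f)\otimes(g\otimes\varepsilon_g)$. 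Contracting an $X_2$-index $e_j$ with $j\in J$ produces the sign $(-1)^{s+k'-1}$, where $k'$ is the position of $j$ in $J$, since the $|I|=s$ factors $e_I$ appear first; this contributes exactly $(-1)^s (f\otimes\varepsilon_f)\otimes\delta(g\otimes\varepsilon_g)$, matching $\widehat{\delta}$. This is essentially the only calculation in the argument and its only subtlety is sign bookkeeping; I expect this to be the main technical step.

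With the chain isomorphism established, the classical algebraic K\"unneth theorem for complexes of free $\ZZ$-modules yields a short exact sequence whose $\mathrm{Tor}$ term vanishes as soon as $H^{\cn}_*(X_i)$ is torsion free for one factor $i$. By Theorem \ref{freePCN}, this holds whenever $X_1$ or $X_2$ has codimension $\le 2$. Therefore
\[
 H_p(\widehat{C}_*,\widehat{\delta})\simeq \bigoplus_{s+t=p} H^{\cn}_s(X_1)\otimes H^{\cn}_t(X_2).
\]

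Finally I would unpack the $\ZZ_2$-grading. Since $s+t$ is even iff $s,t$ have the same parity, grouping the direct sum by parity of $(s,t)$ gives
\[
H^{\pcn}_0(X)\simeq H^{\pcn}_0(X_1)\otimes H^{\pcn}_0(X_2)\oplus H^{\pcn}_1(X_1)\otimes H^{\pcn}_1(X_2),
\]
and the analogous splitting for $s+t$ odd gives the formula for $H^{\pcn}_1(X)$. This is a purely bookkeeping step and completes the proof.
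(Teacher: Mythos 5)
Your proposal is correct and follows essentially the same route as the paper: the chain isomorphism $\Psi_*$ of \eqref{Kunnethchainiso}, the algebraic K\"unneth theorem with the $\mathrm{Tor}$ term killed by Theorem \ref{freePCN}, and the regrouping by parity. The sign verification you single out as the main technical step (the factor $(-1)^s$ arising from contracting an $X_2$-index past the $s$ factors of $e_I$) is exactly what makes $\Psi$ intertwine $\widehat{\delta}$ with $\delta$, and it is consistent with the paper's convention $\widehat{\delta}(x\otimes y)=\delta(x)\otimes y+(-1)^{t}x\otimes\delta(y)$ for $x\in C_t(X_1)$.
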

The following straightforward corollary will be useful later on:
\begin{corollary}
If $X=\Pi_iX_i$ is a finite product of manifold with corners $X_i$ with $codim(X_i)\leq 2 $, then the groups $H_*^{\pcn}(X)$ are torsion free.
\end{corollary}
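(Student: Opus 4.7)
The strategy is a short induction on the number $n$ of factors in $X = X_1 \times \cdots \times X_n$. The base case $n=1$ is exactly Theorem \ref{freePCN}, since each $X_i$ is assumed to have codimension at most $2$, so $H_*^{\pcn}(X_i)$ is a free abelian group.

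For the inductive step, I would write $X = X_1 \times Y$ where $Y = X_2 \times \cdots \times X_n$. By the induction hypothesis, $H_*^{\pcn}(Y)$ is torsion free (we do not need $Y$ itself to have codimension $\le 2$; we only need the torsion freeness of its conormal homology, which is what the induction delivers). Since $X_1$ still has codimension $\le 2$, the hypothesis of the K\"unneth Formula (Proposition \ref{Kunneth}) is satisfied with $X_1$ playing the role of the low-codimension factor. Applying it gives
\begin{equation}
H_0^{\pcn}(X) \simeq \bigl(H_0^{\pcn}(X_1)\otimes H_0^{\pcn}(Y)\bigr) \oplus \bigl(H_1^{\pcn}(X_1)\otimes H_1^{\pcn}(Y)\bigr),
\end{equation}
\begin{equation}
H_1^{\pcn}(X) \simeq \bigl(H_0^{\pcn}(X_1)\otimes H_1^{\pcn}(Y)\bigr) \oplus \bigl(H_1^{\pcn}(X_1)\otimes H_0^{\pcn}(Y)\bigr).
\end{equation}
Each tensor factor on the right is a free abelian group: $H_*^{\pcn}(X_1)$ by Theorem \ref{freePCN}, and $H_*^{\pcn}(Y)$ by the induction hypothesis. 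A tensor product of free $\ZZ$-modules is free, and a finite direct sum of free modules is free, so $H_*^{\pcn}(X)$ is free abelian, in particular torsion free. This closes the induction.

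There is no real obstacle here: both ingredients, Theorem \ref{freePCN} and Proposition \ref{Kunneth}, have already been established, and the only point requiring any care is to keep, at each induction step, a factor of codimension at most $2$ available for the K\"unneth decomposition. Splitting off the factors one at a time (rather than attempting a symmetric decomposition) makes this automatic.
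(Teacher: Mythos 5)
Your proof is correct and is exactly the argument the paper intends: the paper states this as a ``straightforward corollary'' with no written proof, the implicit reasoning being precisely your induction on the number of factors, peeling off one codimension $\le 2$ factor at a time so that Proposition \ref{Kunneth} applies, and then using Theorem \ref{freePCN} together with the fact that tensor products and finite direct sums of free abelian groups are free. Your added remark --- that the induction only needs torsion-freeness of $H_*^{\pcn}(Y)$ rather than a codimension bound on $Y$ --- is a correct and slightly more careful reading of what the K\"unneth argument actually requires.
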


The exact same arguments as above work to show that the Kunneth formula holds in full generality for conormal homology with rational coefficients, {\it i.e.} for $H_*^{\pcn}(X)\otimes_\mathbb{Z}\mathbb{Q}$. We state the proposition as we will use it later:

\begin{proposition}[K\"unneth Formula with rational coefficients]\label{KunnethQ}
For $X=X_1\times X_2$ we have:
\begin{equation}
H_0^{\pcn}(X)\otimes_\mathbb{Z}\mathbb{Q}\simeq (H_0^{\pcn}(X_1)\otimes_\mathbb{Z}\mathbb{Q})\otimes (H_0^{\pcn}(X_2)\otimes_\mathbb{Z}\mathbb{Q})\oplus (H_1^{\pcn}(X_1)\otimes_\mathbb{Z}\mathbb{Q})\otimes (H_1^{\pcn}(X_2)\otimes_\mathbb{Z}\mathbb{Q}),
\end{equation}
\begin{equation}
H_1^{\pcn}(X)\otimes_\mathbb{Z}\mathbb{Q}\simeq (H_0^{\pcn}(X_1)\otimes_\mathbb{Z}\mathbb{Q})\otimes (H_1^{\pcn}(X_2)\otimes_\mathbb{Z}\mathbb{Q})\oplus (H_1^{\pcn}(X_1)\otimes_\mathbb{Z}\mathbb{Q})\otimes (H_0^{\pcn}(X_2)\otimes_\mathbb{Z}\mathbb{Q})
\end{equation}
\end{proposition}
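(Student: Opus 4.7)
The plan is to observe that the chain-level isomorphism $\Psi_*$ constructed in the integral case actually requires no assumption on the codimension of the factors: it is built face-by-face using the bijection $F_p(X_1\times X_2)=\bigsqcup_{s+t=p}F_s(X_1)\times F_t(X_2)$ and the exterior product of the conormal orientation forms, and the verification that $\Psi_*$ intertwines $\widehat{\delta}$ with $\delta$ is a purely combinatorial check on defining functions that does not use torsion-freeness anywhere. So the isomorphism of differential $\mathbb{Z}$-complexes
\[
\Psi_*: \bigl(C_*(X_1)\otimes_{\mathbb{Z}} C_*(X_2),\widehat{\delta}\bigr)\ \longrightarrow\ \bigl(C_*(X),\delta\bigr)
\]
is available in full generality.

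Next I would tensor everything with $\mathbb{Q}$. Since $\mathbb{Q}$ is flat over $\mathbb{Z}$, tensoring commutes with taking homology, so
\[
H_p(C_*(X),\delta)\otimes_{\mathbb{Z}}\mathbb{Q}\ \cong\ H_p\bigl((C_*(X_1)\otimes_{\mathbb{Z}}C_*(X_2))\otimes_{\mathbb{Z}}\mathbb{Q},\widehat{\delta}\bigr).
\]
After base change to $\mathbb{Q}$, the complexes $C_*(X_j)\otimes_{\mathbb{Z}}\mathbb{Q}$ become complexes of $\mathbb{Q}$-vector spaces, which are automatically flat. The classical algebraic Künneth theorem then applies without any Tor contribution, yielding
\[
H_p^{\cn}(X)\otimes_{\mathbb{Z}}\mathbb{Q}\ \cong\ \bigoplus_{s+t=p}\bigl(H_s^{\cn}(X_1)\otimes_{\mathbb{Z}}\mathbb{Q}\bigr)\otimes_{\mathbb{Q}}\bigl(H_t^{\cn}(X_2)\otimes_{\mathbb{Z}}\mathbb{Q}\bigr).
\]

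Finally I would split this into even and odd degrees. Writing $p=2n$ and separating the sum over pairs $(s,t)$ with $s+t$ even into the two subcases $(s,t)$ both even and both odd recovers the stated formula for $H_0^{\pcn}(X)\otimes\mathbb{Q}$; the analogous sorting with $s+t$ odd gives the formula for $H_1^{\pcn}(X)\otimes\mathbb{Q}$. There is no genuine obstacle here: the integral version fails in general only because of potential Tor terms coming from torsion in $H_*^{\cn}(X_j)$, and rationalization kills them by construction. The only mild point of care is keeping track of signs/orientations in $\Psi_*$, but this is identical to the integral statement already used and needs no new argument.
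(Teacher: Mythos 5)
Your proof is correct and follows the same route as the paper: the paper explicitly says the rational K\"unneth formula follows by ``the exact same arguments'' as the integral case, namely the chain-level isomorphism $\Psi_*$ (which indeed needs no codimension or torsion hypothesis) combined with the algebraic K\"unneth theorem, with rationalization killing the Tor terms. Your added remark that tensoring with the flat module $\mathbb{Q}$ commutes with homology is exactly the justification the paper leaves implicit.
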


\section{The computation of $K_*(\cK_b(X))$}\label{secK_b}
 We keep all the notations and conventions of Section \ref{seccnhom}. In particular, the defining functions  induce  a trivialisation of the conormal bundle of any face $f$:
 \begin{equation}
  N(f)\simeq f\times E_f,
 \end{equation}
in which the $p$-dimensional real vector space $E_f$ inherits a basis $b_f=(e_i)_{i\in I}$, where $I$ is  characterized by $f\in c(I)$. These data induce an isomorphism
 \begin{equation}\label{Gamma-b-on-faces}
  \Gamma_b(X)|_f \simeq C^*(\cC(f) \times E_f)
 \end{equation}
where $\cC(f)$ denotes the pair groupoid over $f$, as well as a linear isomorphism $\varphi_f : \RR^p\to E_f$.
 
Also, the filtration \eqref{filtration-by-codimension} gives rise to the following filtration of the $C^*$-algebra $ \cK_b(X)= C^*(\Gamma_b(X))$ by ideals:
\begin{equation}\label{eq:K-filt}
   \cK(L^2(\overset{\circ}{X}))=A_0\subset A_1 \subset \ldots\ \subset A_d=A= \cK_b(X),
\end{equation} 
with $A_m= C^*(\Gamma(X)\vert_{X_m})$  for any $0\le m\le d$. The isomorphisms \eqref{Gamma-b-on-faces} induce
\begin{equation}\label{Alg-for-E1}
   A_{m}/A_{m-1}\simeq C^*(\Gamma\vert_{X_m\setminus X_{m-1}})\simeq  \bigoplus_{f\in F_m} C^*(\cC(f) \times E_f).
\end{equation}

\subsection{The first differential of the spectral sequence for $K_*(A)$}\label{subsecsseq}
The $K$-theory spectral sequence $(E^r_{*,*},d^r_{*,*})_{r\ge 1}$ 
  associated with  \eqref{eq:K-filt} \cite{schochet-top-method-I,kono-tamaki} converges to: 
\begin{equation}
   E^\infty_{p,q} = K_{p+q}(A_p)/ K_{p+q}(A_{p-1}).
\end{equation}
Here we have set  $K_n(A)=K_0(A\otimes C_0(\RR^n))$ for any $C^*$-algebra $A$. By construction, all the terms $E^r_{p,2q+1}$   vanish and by Bott periodicity, $E^r_{p,2q}\simeq E^r_{p,0}$. Also, all the differentials $d^{2r}_{p,q}$ vanish. By definition:
\begin{equation}
 d^1_{p,q} :  E^1_{p,q} = K_{p+q}(A_p/A_{p-1})\longrightarrow  E^1_{p-1,q}= K_{p+q-1}(A_{p-1}/A_{p-2})
\end{equation}
is the connecting homomorphism of the short exact sequence:
\begin{equation}\label{eq:exact-sequence-giving-d1}
  0 \longrightarrow A_{p-1}/A_{p-2}\longrightarrow A_{p}/A_{p-2}\longrightarrow A_{p}/A_{p-1}\longrightarrow 0.
\end{equation}
By \eqref{Alg-for-E1}, we get isomorphisms:
\begin{equation}\label{eq:E1-friendly-form}
  E^1_{p,q} \simeq  \bigoplus_{f\in F_p} K_{p+q}(C^*(\cC(f)\times E_f)) .
\end{equation}
Since the real vector space $E_f$ has dimension $p$, the groups $E^1_{p,q}$ vanish for odd $q$ and for even $q$, we have after applying Bott periodicity, \( E^1_{p,q}\simeq \ZZ^{\# F_p}\). 

Melrose and Nistor   \cite[Theorem 9]{MelNis} already achieved the computation of $d^1_{*,*}$.  In order  to relate the terms $E^2_{*,*}$ with the elementary defined conormal homology, we reproduce their computation in a slightly different way. Our approach is based on the next two lemmas. 
\begin{lemma}\label{lem:R1-acting-onR+}
 Let $\RR_+\rtimes \RR$ be the groupoid of the action of $\RR$ onto $\RR_+$ given by 
\begin{equation}\label{eq:basic-action}
   t.\lambda = te^\lambda,\qquad t\in \RR_+,\ \lambda\in \RR.
\end{equation}
The element $\alpha \in KK_1(C^*(\RR),C^*(\RR_+^*))$ associated with the exact sequence 
\begin{equation}\label{eq:exact-seq-basic-action}
  0\longrightarrow C^*(\cC(\RR_+^*))\longrightarrow C^*(\RR_+\rtimes \RR)\longrightarrow C^*(\RR)\longrightarrow 0
\end{equation}
is a $KK$-equivalence. 
\end{lemma}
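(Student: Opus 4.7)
My plan is to reduce the $KK$-equivalence claim to a pure K-theory computation and then upgrade it via the universal coefficient theorem; the key input is that, under a crossed product interpretation, Connes' Thom isomorphism forces the middle algebra in \eqref{eq:exact-seq-basic-action} to be K-theoretically trivial.

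First, I would rewrite $C^*(\RR_+\rtimes\RR)$ as the transformation crossed product $C_0(\RR_+)\rtimes\RR$, where $\RR$ acts by the dilation dual to \eqref{eq:basic-action}. The open orbit $\RR_+^*=(0,\infty)$ carries a free and transitive $\RR$-action, so $C_0(\RR_+^*)\rtimes\RR$ is Morita equivalent to $\CC$ and is canonically isomorphic to $\cK(L^2(\RR_+^*))=C^*(\cC(\RR_+^*))$; the fixed point $0$ yields the quotient $\CC\rtimes\RR=C^*(\RR)$. Thus \eqref{eq:exact-seq-basic-action} is just the standard two-orbit ideal/quotient sequence for this crossed product.

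Second, I would invoke Connes' Thom isomorphism to obtain $K_*(C_0(\RR_+)\rtimes\RR)\simeq K_{*+1}(C_0(\RR_+))=0$, the vanishing being due to contractibility of $[0,\infty)$. The six-term exact sequence associated with \eqref{eq:exact-seq-basic-action} then forces the connecting map $K_1(C^*(\RR))\to K_0(C^*(\cC(\RR_+^*)))$, which by construction is the map on K-theory induced by $\alpha$, to be an isomorphism $\ZZ\xrightarrow{\sim}\ZZ$; in the opposite parity both groups are zero, so $\alpha$ trivially induces $0\to 0$.

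Finally, since $C^*(\RR)\simeq C_0(\RR)$ and $\cK$ both lie in the bootstrap class, the Rosenberg--Schochet UCT forces any $KK^1$-class inducing isomorphisms on K-theory in both parities to be a $KK$-equivalence; hence $\alpha$ is a $KK$-equivalence. The only delicate point I foresee is the bookkeeping of conventions --- verifying that the six-term connecting map genuinely coincides with Kasparov product by $\alpha$ and that Connes' Thom is applied with the correct degree shift --- but no geometric or analytic obstacle arises once contractibility of $[0,\infty)$ is brought to bear.
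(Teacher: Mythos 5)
Your proof is correct and follows essentially the same route as the paper: both hinge on Connes' Thom isomorphism together with the contractibility of $[0,\infty)$ to kill the K-theory of the middle algebra $C^*(\RR_+\rtimes\RR)$. The only difference is in the last step: the paper invokes the $KK$-theoretic (Fack--Skandalis) form of the Thom isomorphism to conclude that $C^*(\RR_+\rtimes\RR)$ is $KK$-equivalent to the contractible algebra $C^*(\RR_+\times\RR)$ and hence $KK$-contractible, whereas you work at the level of $K$-groups and then upgrade to a $KK$-equivalence via the UCT --- both steps are legitimate here since all algebras involved are nuclear and lie in the bootstrap class.
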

\begin{proof}
 By the Thom-Connes isomorphism, the $C^*$-algebras $C^*(\RR_+\rtimes \RR)$ and $C^*(\RR_+\times \RR)$ are $KK$-equivalent. The latter being $K$-contractible, the result follows. 
\end{proof}
\begin{lemma}\label{lem:switching-coordinates-in-Rp-actions-1}
 Let $\RR_+\rtimes_{i}\RR^p$ be the groupoid  given by the action of  the $i^{\text{th}}$ coordinate of $\RR^p$  on $\RR_+$ by \eqref{eq:basic-action}. Let $\alpha_{i,p} \in KK_1(C^*(\RR^p), C^*(\RR^{p-1}))$ be the $KK$-element induced by   the exact sequence 
\begin{equation}\label{eq:exact-seq-basic-action-i-p}
  0\longrightarrow C^*(\cC(\RR_+^*)\times \RR^{p-1})\longrightarrow C^*(\RR_+\rtimes_{i} \RR^p)\longrightarrow C^*(\RR^p)\longrightarrow 0.
\end{equation}
Then for all $1\le i\le p$ we have
\begin{equation}
   \alpha_{i,p} = (-1)^{i-1}\alpha_{1,p} \text{ and }   \alpha_{1,p}= \sigma_{C^*(\RR^{p-1})}(\alpha),
\end{equation}
where $\sigma_D : K_*(A,B)\to K_*(A\otimes D,B\otimes D)$ denotes the Kasparov suspension map. 
\end{lemma}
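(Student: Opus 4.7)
The plan is to establish the two identities separately. For the second one, $\alpha_{1,p}=\sigma_{C^*(\RR^{p-1})}(\alpha)$, it suffices to observe that when $i=1$ the action of $\RR^p$ on $\RR_+$ factors through the projection to the first coordinate, so the groupoid splits as a direct product $\RR_+\rtimes_1\RR^p\simeq (\RR_+\rtimes\RR)\times\RR^{p-1}$. Consequently, the exact sequence \eqref{eq:exact-seq-basic-action-i-p} with $i=1$ is (under the canonical identification of $C^*$-algebras of products of groupoids with tensor products) the external tensor product of \eqref{eq:exact-seq-basic-action} with $C^*(\RR^{p-1})$, and the identity follows from the functoriality of the $KK$-class of a short exact sequence under external tensoring with a $C^*$-algebra, i.e.\ from the very definition of the Kasparov suspension $\sigma$.

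To prove $\alpha_{i,p}=(-1)^{i-1}\alpha_{1,p}$, I would build a groupoid isomorphism intertwining the two exact sequences and track the signs it induces on $K$-theory. Let $\tau$ be the transposition of the $1$st and $i$th coordinates of $\RR^p$. Since $\tau$ carries the $i$th coordinate onto the first one, the map $\Phi\colon (t,\vec{\lambda})\mapsto (t,\tau\vec{\lambda})$ is a groupoid isomorphism from $\RR_+\rtimes_i\RR^p$ onto $\RR_+\rtimes_1\RR^p$. Restricting to the closed invariant subset $\{0\}$ of units, $\Phi$ induces the automorphism $\tau_*$ of $C^*(\RR^p)$; restricting to the open invariant subset $\RR_+^*$ and using, on both sides, the identification $C^*(\cC(\RR_+^*)\times \RR^{p-1})\simeq \cK\otimes C^*(\RR^{p-1})$, $\Phi$ induces $\mathrm{id}_{\cK}\otimes\tau'_*$, where $\tau'$ is the permutation of $\RR^{p-1}$ read off from $\tau$ after parametrising the hyperplanes $\{\lambda_i=0\}$ and $\{\lambda_1=0\}$ of $\RR^p$ by the obvious projections; explicitly,
\[
\tau'\colon (\lambda_1,\ldots,\lambda_{i-1},\lambda_{i+1},\ldots,\lambda_p)\longmapsto (\lambda_2,\ldots,\lambda_{i-1},\lambda_1,\lambda_{i+1},\ldots,\lambda_p),
\]
which is a cyclic permutation of length $i-1$ acting on the first $i-1$ coordinates.

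The conclusion then follows from the naturality of the connecting $KK$-class under morphisms of short exact sequences: if $\Phi$ induces the isomorphism $q$ on the quotient and $j$ on the ideal, then $\alpha_{i,p}=[j]^{-1}\cdot\alpha_{1,p}\cdot [q]$ in Kasparov's $KK$-theory. Since a linear automorphism $\phi$ of $\RR^n$ acts on $K_0(C^*(\RR^n))\simeq\ZZ$ as multiplication by $\mathrm{sign}(\det\phi)$, one has $[\tau_*]=-1$ and $[\tau'_*]=(-1)^{i-2}$ (the signatures of a transposition and of a cycle of length $i-1$, respectively); hence $\alpha_{i,p}=(-1)\cdot(-1)^{i-2}\,\alpha_{1,p}=(-1)^{i-1}\alpha_{1,p}$. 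The only delicate point is the bookkeeping of signs under the Kasparov product together with the verification that $\tau_*$ and $\tau'_*$ are indeed the automorphisms induced by $\Phi$ at both ends of the two exact sequences; once this is settled, the final equality is immediate.
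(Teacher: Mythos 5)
Your proof is correct and follows essentially the same strategy as the paper: a coordinate permutation induces an isomorphism between the two extensions, naturality of the boundary class in $KK$-theory transfers $\alpha_{i,p}$ to $\alpha_{1,p}$ up to the classes of the induced automorphisms, and those classes are computed as orientation signs of permutation matrices; the splitting $\RR_+\rtimes_1\RR^p\simeq(\RR_+\rtimes\RR)\times\RR^{p-1}$ used for the suspension identity is also exactly the paper's argument. The only divergence is in the choice of permutation and hence in where the sign lands: the paper works (in effect) with the $i$-cycle $(1,2,\ldots,i)$, for which the induced automorphism of the ideal is the identity and the entire sign $(-1)^{i-1}$ sits on the quotient, whereas you use the genuine transposition $(1,i)$, getting $-1$ on the quotient and the sign $(-1)^{i-2}$ of the induced $(i-1)$-cycle on the ideal, which multiply to the same $(-1)^{i-1}$. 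Your bookkeeping is in fact the more self-consistent of the two, since the paper's write-up simultaneously calls $\tau$ the transposition $(1,i)$, asserts $\tau_i=\id$ (true only for the cycle), and evaluates $[\tau]$ via the factorization $(1,2)(2,3)\cdots(i-1,i)$ (which equals the $i$-cycle, not the transposition); your version resolves this ambiguity cleanly and reaches the same conclusion.
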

\begin{proof}
  Let  $\tau$  be a permutation  of $\{1,2,\ldots,p\}$ and $i\in\{1,\ldots,p\}$. We denote in the same way   the corresponding   automorphisms of $\RR^p$ and   $C^*(\RR^p)$. We have a groupoid isomorphism 
\[
 \widetilde{\tau}: \RR_+\rtimes_{i}\RR^p\overset{\simeq}{\longrightarrow}\RR_+\rtimes_{\tau(i)}\RR^p
\]
and if we denote by $\tau_{i}$ the automorphism of $\RR^{p-1}$ obtained by removing the  $i^{\text{th}}$ factor in the domain of $\tau$ and the ${\tau(i)}^{\text{th}}$ factor in the range of $\tau$, we get a commutative diagram of exact sequences: 
\begin{equation}
\xymatrix{
0\ar[r]& C^*(\cC(\RR_+^*)\times \RR^{p-1}))\ar[d]^-{\tau_i}\ar[r]& C^*(\RR_+\rtimes_{i}\RR^p)\ar[d]^-{\widetilde{\tau}}\ar[r] & C^*(\RR^p)\ar[r]\ar[d]^-{\tau}&0\\
0\ar[r]& C^*(\cC(\RR_+^*)\times \RR^{p-1}))\ar[r]& C^*(\RR_+\rtimes_{\tau(i)}\RR^p)\ar[r]&C^*(\RR^p)\ar[r]& 0
}
\end{equation}
It follows that 
\begin{equation}
    \alpha_{\tau(i),p} = [\tau^{-1}]\otimes \alpha_{i,p}\otimes [\tau_i] \in KK_1(C^*(\RR^p),\cK\otimes C^*(\RR^{p-1})). 
\end{equation}
Taking $\tau=(1,i)$, we get $\tau=\tau^{-1}$ and $\tau_{i}=\id$, so that $\alpha_{i,p}=[\tau] \otimes \alpha_{1,p}$. Moreover, observe that for any $j$, 
\begin{equation}
 [(j-1,j)] = 1_{j-2}\otimes [f]\otimes 1_{p-j} \in \KK(C^*(\RR^p),C^*(\RR^p))
\end{equation}
where $[f]=-1\in KK(C^*(\RR^2),C^*(\RR^2))$ is the class of the flip automorphism and we have used the identification 
\[
 C^*(\RR^p)=C^*(\RR^{j-2})\otimes C^*(\RR^2)\otimes C^*(\RR^{p-j}).
\]
Using 
\[
  (1,i)=(1,2).(2,3)\ldots (i-1,i)
\]
now gives $[\tau]=(-1)^{i-1}$. Factorizing $C^*(\RR^{p-1})$ on the right in the sequence \eqref{eq:exact-seq-basic-action-i-p} for $i=1$ gives the  assertion $\alpha_{1,p}= \sigma_{C^*(\RR^{p-1})}(\alpha)$.
\end{proof}

\bigskip 
Using the canonical isomorphism $KK_1(C^*(\RR),C^*(\RR_+^*))\simeq KK_1(C_0(\RR),\CC)$, we can define a generator $\beta$  of $K_1(C_0(\RR))$ by
\begin{equation}\label{alphabeta}
  \beta\otimes \alpha = +1.
\end{equation}
For any $f\in F_p$ we then obtain a generator $\beta_f$ of $K_p(C_0(E_f))$ by 
\begin{equation}\label{Cliffrelation}
  \beta_f = (\varphi_f)_*(\beta^p)\in K_p(C_0(E_f))
\end{equation}
where  $\beta^p$ is the external product:
\begin{equation}
 \beta^p =\beta \otimes_{\CC}\cdots\otimes_{\CC}\beta \in K_p(C_0(\RR^p)).
\end{equation}
Picking up rank one projectors $p_f$ in $C^*(\cC(f))$, we get a basis of the free $\ZZ$-module $E^1_{p,0}$:
\begin{equation}
  (p_f\otimes \beta_f)_{f\in F_p}.
\end{equation}
Bases of $E^1_{p,q}$ for all even $q$ are deduced from the previous one by applying Bott periodicity. 

Now consider  faces $f\in F_p$ and $g\in F_{p-1}$ such that $f\subset \partial \overline{g}$. The $p$ and $p-1$ uples $I$, $J$ such that $f\in c(I)$ and $g\in c(J)$ differ by exactly one index, say the $j^{\text{th}}$, and we define
\begin{equation}
 \sigma(f,g)=(-1)^{j-1}.
\end{equation}
Introduce the exact sequence   
\begin{equation}\label{eq:exact-seq-basic-action-f-g}
  0\longrightarrow C^*(\cC(f\times\RR_+^*)\times E_g)\longrightarrow C^*(\cC(f)\times(\RR_+\rtimes_{j} E_f))\longrightarrow C^*(\cC(f)\times E_f)\longrightarrow 0,
\end{equation}
where $\RR_+\rtimes_{j} E_f$ denotes the transformation groupoid where the $j^{\text{th}}$ coordinate (only) of $E_f$ acts on $\RR_+$ by \eqref{eq:basic-action} again. We note 
\[
  \partial_{f,g} : K_{p}(C^*(\cC(f)\times E_f)) \longrightarrow K_{p-1}(C^*(\cC(g)\times E_g))
\]
the connecting homomorphism associated with \eqref{eq:exact-seq-basic-action-f-g},  followed by the unique $KK$-equivalence
\begin{equation}
  C^*(\cC(f\times\RR_+^*))\longrightarrow  C^*(\cC(g))
\end{equation}
provided by any tubular neighborhood of $f$ into $g$.
 
\begin{proposition}\label{heart-of-d1}
  With the notation above, we get 
  \begin{equation}
   \partial_{f,g}(p_f\otimes \beta_f) = \sigma(f,g). p_g\otimes \beta_g.
  \end{equation}
\end{proposition}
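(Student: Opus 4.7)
The plan is to reduce the connecting homomorphism $\partial_{f,g}$ to the elementary $KK$-class $\alpha_{j,p}$ studied in Lemma \ref{lem:switching-coordinates-in-Rp-actions-1}, and then to evaluate it on the explicit generator $p_f\otimes \beta_f$ using the normalization $\beta\otimes \alpha=+1$. All the content lies in tracking Morita equivalences and keeping signs under control.

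First, I would use that $C^*(\cC(f))\simeq \cK$ and $C^*(\cC(g))\simeq\cK$ (Morita trivial) to strip off the pair-groupoid factors in the sequence \eqref{eq:exact-seq-basic-action-f-g}. Similarly, the factor $\cC(\RR_+^*)$ is Morita trivial, and the chosen tubular neighborhood of $f$ inside $g$ provides an identification of $C^*(\cC(f\times \RR_+^*))$ with $C^*(\cC(g))$ by which the rank-one projector $p_f\otimes p_{\RR_+^*}$ is carried to $p_g$ (this is exactly the $KK$-equivalence used to define $\partial_{f,g}$). After this reduction, the connecting map $\partial_{f,g}$ is identified with Kasparov product with $\alpha_{j,p}\in KK_1(C^*(E_f),C^*(E_g))$ transported through $\varphi_f$ and $\varphi_g$, where $j$ is the unique position in $I=(i_1,\ldots,i_p)$ such that $J=I\setminus\{i_j\}$ (same $j$ as in $\sigma(f,g)=(-1)^{j-1}$).

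Next, I would invoke Lemma \ref{lem:switching-coordinates-in-Rp-actions-1} to replace $\alpha_{j,p}$ by $(-1)^{j-1}\sigma_{C^*(\RR^{p-1})}(\alpha)$. On the generator $\beta_f=(\varphi_f)_*(\beta^p)$, product with $\sigma_{C^*(\RR^{p-1})}(\alpha)$ acts only on the factor corresponding to $e_{i_j}$. After the swap $(1,j)$ used in the lemma moves that factor to the first slot and produces the sign $(-1)^{j-1}$, the remaining $p-1$ factors of $\beta^p$ are exactly, and in the same order, the generators attached to the basis $(e_i)_{i\in J}$ of $E_g$, so their image under $(\varphi_g)_*$ is $\beta_g$. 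Applying equation \eqref{alphabeta}, $\beta\otimes \alpha=+1$, kills the distinguished factor and yields
\begin{equation*}
   \partial_{f,g}(p_f\otimes \beta_f) = (-1)^{j-1}\, p_g\otimes \beta_g = \sigma(f,g)\, p_g\otimes \beta_g.
\end{equation*}

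The main obstacle, as usual in this kind of computation, is bookkeeping: one must verify that the two sources of the sign $(-1)^{j-1}$ (the combinatorial one, coming from the position of the deleted index in $I$ and used to define $\sigma(f,g)$, and the analytic one, coming from the permutation $(1,j)$ of the factors of $C^*(\RR^p)$ in Lemma \ref{lem:switching-coordinates-in-Rp-actions-1}) are indeed the same, and that the Morita equivalence coming from the tubular neighborhood matches $p_f\otimes p_{\RR_+^*}$ with $p_g$ without introducing further signs (this is canonical once the co-orientations $e_I$, $e_J$ are fixed and an inward direction for $r_{i_j}$ is chosen, which is the standing convention). Everything else is a direct application of Lemmas \ref{lem:R1-acting-onR+} and \ref{lem:switching-coordinates-in-Rp-actions-1}.
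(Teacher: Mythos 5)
Your proposal is correct and follows essentially the same route as the paper, whose proof of Proposition \ref{heart-of-d1} consists precisely of identifying $E_f\simeq\RR^p$, $E_g\simeq\RR^{p-1}$ via $b_f,b_g$ and applying Lemmas \ref{lem:R1-acting-onR+} and \ref{lem:switching-coordinates-in-Rp-actions-1}. Your version merely spells out the Morita/tubular-neighborhood bookkeeping and the evaluation $\beta\otimes\alpha=+1$ that the paper leaves implicit.
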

\begin{proof}
 Identify $E_f\simeq \RR^p$ and $E_g \simeq \RR^{p-1} $ using $b_f,b_g$ and apply Lemmas \ref{lem:switching-coordinates-in-Rp-actions-1} and \ref{lem:R1-acting-onR+}.
\end{proof}

\bigskip We can now achieve the determination of $d^1_{*,*}$.

\begin{theorem}\label{thmdiff1}
 We have 
\begin{equation}
  \forall f\in F_p,\qquad   d^1_{p,0}(p_f\otimes\beta_f) = \sum_{\substack{g\in F_{p-1} \\ f\subset \partial \overline{g}}}
     \sigma(f,g)p_{g}\otimes \beta_g.
\end{equation}
\end{theorem}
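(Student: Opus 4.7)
The plan is to reduce the computation of $d^1_{p,0}(p_f\otimes \beta_f)$ to a sum of independent local computations, one around each $g\in F_{p-1}$ containing $f$ in its closure, and then to invoke Proposition \ref{heart-of-d1}. Three things must be done: localize near $f$, identify the local model with the one of \eqref{eq:exact-seq-basic-action-f-g}, and assemble the pieces with the correct signs.

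\emph{Localization.} Fix $f\in F_p$ and choose a small open neighborhood $U_f$ of $f$ inside $X_p$ which meets only those codimension $p-1$ faces $g$ with $f\subset\partial\overline{g}$, and no other codimension $p$ face. Setting $V_m=U_f\cap X_m$ and $B_m=C^*(\Gamma_b(X)\vert_{V_m})$, the inclusions of open saturated subsets $V_m\hookrightarrow X_m$ induce a map from the exact sequence \eqref{eq:exact-sequence-giving-d1} to its local analogue for the $B_m$. By excision the generator $p_f\otimes \beta_f\in K_p(A_p/A_{p-1})$ comes from a corresponding local class in $K_p(B_p/B_{p-1})$, so naturality of the connecting homomorphism reduces the problem to the same computation inside $U_f$.

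\emph{Local model.} Inside $V_{p-1}\setminus V_{p-2}$, the groupoid $\Gamma_b(X)$ splits as a disjoint union over the $g$'s with $f\subset\partial \overline{g}$ of its restrictions to $U_f\cap g$, giving
\[
 B_{p-1}/B_{p-2} \simeq \bigoplus_{\substack{g\in F_{p-1}\\ f\subset \partial\overline{g}}} C^*(\Gamma_b(X)\vert_{U_f\cap g}),
\]
and each summand is naturally $KK$-equivalent to $C^*(\cC(g)\times E_g)$ via the tubular-neighborhood structure of $f$ in $g$ provided by the defining functions $r_i$. Projecting the local exact sequence on the $g$-summand and trivialising conormal bundles by the $r_i$'s identifies the resulting subsequence with the model \eqref{eq:exact-seq-basic-action-f-g} tensored with a pair-groupoid factor over $f$. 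The coordinate of $E_f$ that acts on $\RR_+$ in this identification is the $j$-th one, where $j$ is the position in the tuple $I$ (with $f\in c(I)$) of the unique index $i_{(g,f)}\in I\setminus J$, with $g\in c(J)$.

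\emph{Assembly.} By Proposition \ref{heart-of-d1}, the $g$-component of $d^1_{p,0}(p_f\otimes \beta_f)$ equals $\partial_{f,g}(p_f\otimes \beta_f)=\sigma(f,g)\,p_g\otimes \beta_g$ with $\sigma(f,g)=(-1)^{j-1}$. Summing over the $g$'s with $f\subset\partial\overline g$ yields the stated formula. The step I expect to be the main obstacle is the local model identification: one must check that the $KK$-equivalence $C^*(\cC(f\times\RR_+^*))\simeq C^*(\cC(g))$ produced by the tubular neighborhood genuinely matches the natural trivialisations coming from the global defining functions of $X$, and that the Bott generators $\beta_f$, $\beta_g$ are correctly related by the suspension identity of Lemma \ref{lem:switching-coordinates-in-Rp-actions-1}. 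Should either identification differ by a sign from the choices made in Proposition \ref{heart-of-d1}, the sign $(-1)^{j-1}$ appearing there would not match $\sigma(f,g)$ as defined globally and spurious signs would appear.
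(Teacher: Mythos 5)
Your proposal is correct and follows essentially the same route as the paper: both reduce $d^1_{p,0}$ to a sum of connecting maps indexed by the faces $g\in F_{p-1}$ with $f\subset\partial\overline{g}$, use a collar/tubular neighborhood of $f$ in $\overline{g}$ given by the defining function $r_{i_{(g,f)}}$ to identify each local exact sequence with the model sequence \eqref{eq:exact-seq-basic-action-f-g}, and then invoke Proposition \ref{heart-of-d1}. The only cosmetic difference is that you localize around $f$ before splitting over the $g$'s, whereas the paper splits first and then localizes; the sign-matching issue you flag is resolved exactly as you anticipate, because the collar identification is built from the same global defining functions that trivialise $E_f$ and $E_g$.
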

\begin{proof}
For $p=0$, we have $F_{p-1}=\emptyset$ and $d^1_{p,0}=0$, the result follows. For $p\ge 1$, we recall that 
\begin{equation}
  d^1_{p,0} : \oplus_{f\in F_p}  K_{p}(C^*(\cC(f)\times E_f)) \longrightarrow  \oplus_{g\in F_{p-1}} K_{p-1}(C^*(\cC(g)\times E_g) ).
\end{equation}
is the connecting homomorphism in $K$-theory of the exact sequence \eqref{eq:exact-sequence-giving-d1}. We obviously have  
\begin{equation}
 d^1_{p,0}(p_f\otimes\beta_f) = \sum_{g\in F_{p-1}} \partial_g(p_f\otimes\beta_f)
\end{equation}
where $\partial_g$ is the connecting homomorphism in $K$-theory of the exact sequence 
\begin{equation}
  0\longrightarrow C^*(\Gamma\vert_{g})\longrightarrow C^*(\Gamma\vert_{g\cup f})\longrightarrow C^*(\Gamma\vert_{f})\longrightarrow 0.
\end{equation}
If $f\not\subset \partial \overline{g}$ then the sequence  splits and $\partial_g(p_f\otimes \beta_f)=0$. Let $g\in F_{p-1}$ be such that $f \subset \partial \overline{g}$.
Let $\cU$ be an open neighborhood of $f$ in $X$   such that there exists a diffeomorphism  
 \begin{equation}\label{collar-ident}
  \cU_g:= \cU\cap g \longrightarrow f\times (0,+\infty),\qquad x \longmapsto (\phi(x),r_{i_g}(x)),
 \end{equation}
 where $r_{i_g}$ is the defining function of $f$ in $\overline{g}$. This yields a commutative diagram
\begin{equation}\label{diag:localization-boundary-around-face}
\xymatrix{
0\ar[r]& C^*(\Gamma\vert_{\cU_g})\ar[d]^-{\hookrightarrow}_{\iota}\ar[r]& C^*(\Gamma\vert_{\cU_g\cup f})\ar[d]^-{\hookrightarrow}\ar[r] & 
C^*( \Gamma\vert_{f})\ar[r]\ar[d]^-{=}& 0 \\
0\ar[r]& C^*(\Gamma\vert_{g})\ar[r]& C^*(\Gamma\vert_{g\cup f})\ar[r]& C^*( \Gamma\vert_{f})\ar[r]& 0
}
\end{equation}
whose upper   sequence coincides with \eqref{eq:exact-seq-basic-action-f-g}  using \eqref{collar-ident}. This implies
\begin{equation}
  \partial_g = \partial_{f,g}.
\end{equation}
The result follows by Proposition \ref{heart-of-d1}.
\end{proof}

\bigskip The map $d^1_{p,q}$, $q$ even, is deduced from $d^1_{p,0}$ by Bott periodicity. We are ready to relate the $E^2$ terms with conormal homology.

\begin{corollary}\label{cordiff1}
 For every $p\in \{1,...,d\}$  there are isomorphisms 
\begin{equation}\label{proofPCHvsKthr=1:HR1}
\phi_{p,1}^i : H_{\mathrm{i}}^{\pcn}(X_p,X_{p-1}) \longrightarrow   K_i(A_p/A_{p-1}), \qquad \{0,1\}\ni i\equiv p\mod 2,
\end{equation}
such that the following diagram commute 
\begin{equation}\label{proofPCHvsKthr=1:HR2}
 \xymatrix{
 H_{\mathrm{i}}^{\pcn}(X_p,X_{p-1})\ar[r]^-{\phi_{p,1}^i}  \ar[d]_{\partial} & K_p(A_p/A_{p-1})) \ar[d]^{d^1_{p,0}} \\
 H_{\mathrm{1-i}}^{\pcn}(X_{p-1},X_{p-2})\ar[r]^{\phi_{p-1,1}^{1-i}}  & K_{p-1}(A_{p-1}/A_{p-2})) 
 } 
\end{equation}
where $\partial$ stands for the connecting morphism  in conormal homology.
\end{corollary}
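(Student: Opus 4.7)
My plan is to exhibit $\phi_{p,1}^i$ explicitly on natural bases and then verify commutativity of the square by comparing two formulas already established in the paper.

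First, I would identify both groups in \eqref{proofPCHvsKthr=1:HR1} as free abelian groups on $F_p$. On the conormal side, the section $\rho$ identifies the quotient complex $(C_*(X_p,X_{p-1}),\delta)$ with $C_p(X)$ concentrated in degree $p$ with zero differential: any boundary of an element of $C_p(X)$ lands in $C_{p-1}(X)\subset C_*(X_{p-1})$ and hence vanishes in the quotient. Thus $H_i^{\pcn}(X_p,X_{p-1})\simeq C_p(X)\simeq \ZZ^{\#F_p}$ when $i\equiv p\pmod 2$ and is zero otherwise, with canonical basis $\{f\otimes e_I : f\in c(I),\ f\in F_p\}$. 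On the K-theory side, \eqref{Alg-for-E1}, \eqref{Cliffrelation} and Bott periodicity yield $K_i(A_p/A_{p-1})\simeq \ZZ^{\#F_p}$ when $i\equiv p\pmod 2$, with basis $(p_f\otimes \beta_f)_{f\in F_p}$. I would then define $\phi_{p,1}^i(f\otimes e_I)=p_f\otimes \beta_f$, which is manifestly a $\ZZ$-linear isomorphism.

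It remains to check commutativity of \eqref{proofPCHvsKthr=1:HR2}. Proposition \ref{cnhomologyconnectingmap}, applied via the section $\rho$, gives
\[
  \partial(f\otimes e_I) \;=\; \delta_p(f\otimes e_I) \;=\; \sum_{\substack{g\in F_{p-1}\\ f\subset \overline{g}}} g\otimes (e_{i_{(g,f)}}\lrcorner e_I) \;=\; \sum_{\substack{g\in F_{p-1}\\ f\subset \overline{g}}} \sigma(f,g)\, g\otimes e_J,
\]
since the contraction produces $(-1)^{j-1}e_J$ when $i_{(g,f)}$ sits in the $j$-th position of $I$ and $g\in c(J)$, which is exactly $\sigma(f,g)$ by definition. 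On the other hand, Theorem \ref{thmdiff1} reads
\[
  d^1_{p,0}(p_f\otimes\beta_f) \;=\; \sum_{\substack{g\in F_{p-1}\\ f\subset \overline{g}}} \sigma(f,g)\, p_g\otimes \beta_g,
\]
so that $\phi_{p-1,1}^{1-i}\circ\partial$ and $d^1_{p,0}\circ \phi_{p,1}^i$ agree term-by-term on each generator $f\otimes e_I$.

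The only substantive point, and the one I would expect to be the main obstacle without the preparatory lemmas, is the matching of the two occurrences of $\sigma(f,g)$. This agreement is not accidental: the $(-1)^{j-1}$ coming from the contraction $e_{i_{(g,f)}}\lrcorner e_I$ records the position of the missing conormal coordinate, while on the K-theory side the very same $(-1)^{j-1}$ arises from Lemma \ref{lem:switching-coordinates-in-Rp-actions-1} via a composition of flip classes in $KK(C^*(\RR^2),C^*(\RR^2))$, combined with the normalization $\beta\otimes\alpha=+1$ fixed in \eqref{alphabeta}. The sign conventions built into $\delta$ and into $\beta_f$ have been tuned so that no further bookkeeping is needed.
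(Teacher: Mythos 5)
Your proof is correct and follows essentially the same route as the paper: identify $H_i^{\pcn}(X_p,X_{p-1})$ with $C_p(X)$ and $K_i(A_p/A_{p-1})$ with $\ZZ^{\#F_p}$ via the basis $(p_f\otimes\beta_f)$, define $\phi_{p,1}^i(f\otimes\epsilon_f)=p_f\otimes\beta_f$, and read off commutativity from Theorem \ref{thmdiff1} together with the observation that the contraction sign $(-1)^{j-1}$ in $\delta_p$ is exactly $\sigma(f,g)$. The paper's own proof is a two-line version of precisely this argument.
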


\begin{proof}
If $i\equiv p\mod 2$ then $H_{\mathrm{i}}^{\pcn}(X_p,X_{p-1})=C_p(X)$ and $\partial=\delta_p:C_p(X)\longrightarrow C_{p-1}(X)$. We define \eqref{proofPCHvsKthr=1:HR1} by $\phi_{p,1}^i(f\otimes \epsilon_f)=p_f\otimes \beta_f$  and Theorem \ref{thmdiff1} gives the commutativity of \eqref{proofPCHvsKthr=1:HR2}.
\end{proof}

\bigskip In other words, the map $f\otimes \epsilon_f\mapsto p_f\otimes \beta_f$ induces a isomorphism
\begin{equation}
   H_{p}^{\cn}(X) \simeq E^2_{p,0}.
\end{equation}
It would be very interesting to compute the higher differentials $d^{2r+1}_{p,0}$.

\subsection{The final computation for $K_*(\cK_b(X))$ in terms of conormal homology}

Before getting to the explicit computations and to the analytic corollaries in term of these, let us give a simple but interesting result. It is about the full understanding of the six term exact sequence in K-theory of the fundamental sequence 
\begin{equation}\label{sesbcompact}
\xymatrix{
0\ar[r]&\cK(X)\ar[r]^-{i_0}&\cK_b(X)\ar[r]^-{r}&\cK_b(\partial X) \ar[r]&0.
}
\end{equation}

\begin{proposition}\label{thmbcompact}
For a connected manifold with corners X of codimension greater or equal to one the induced morphism by $r$ in $K_0$, $r:K_0(\cK_b(X))\to K_0(\cK_b(\partial X))$, is an isomorphism. Equivalently
\begin{enumerate}
\item The morphism $i_F:K_0(\cK)\cong \mathbb{Z} \to K_0(\cK_b(X))$ is the zero morphism.
\item The connecting morphism $K_1(\cK_b(\partial X)) \to K_0(\cK)\cong \mathbb{Z}$ is surjective.
\end{enumerate}
\end{proposition}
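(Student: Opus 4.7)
My plan is to deduce everything from the six-term exact sequence in $K$-theory associated to \eqref{sesbcompact}, combined with the filtration \eqref{eq:K-filt} and the explicit form of its first differential given by Theorem \ref{thmdiff1}.

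First I would observe that since $K_1(\cK)=0$, the six-term exact sequence associated to \eqref{sesbcompact} reduces to
\[
  K_1(\cK_b(\partial X)) \xrightarrow{\partial} K_0(\cK) \xrightarrow{i_*} K_0(\cK_b(X)) \xrightarrow{r_*} K_0(\cK_b(\partial X)) \to 0.
\]
In particular $r_*$ is always surjective, and by exactness at $K_0(\cK_b(X))$ it is an isomorphism iff $i_*=0$, which by exactness at $K_0(\cK)$ is in turn equivalent to the surjectivity of $\partial$. This gives the equivalence of the three statements immediately.

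Next I would prove (1), namely that $i_*:K_0(\cK)=K_0(A_0)\to K_0(\cK_b(X))=K_0(A)$ vanishes. Since this map factors as $K_0(A_0)\to K_0(A_1)\to K_0(A)$, it is enough by exactness of $0\to A_0 \to A_1 \to A_1/A_0 \to 0$ to show that the connecting homomorphism $\partial_1:K_1(A_1/A_0)\to K_0(A_0)\cong \mathbb{Z}$ is surjective. Here I would use the hypothesis $\mathrm{codim}(X)\geq 1$ to pick any hyperface $f\in F_1$, which is nonempty, together with the isomorphism \eqref{Alg-for-E1} to identify $p_f\otimes \beta_f$ as an element of $K_1(A_1/A_0)$. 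Then Theorem \ref{thmdiff1} (whose $d^1_{1,0}$ is exactly this $\partial_1$) gives
\[
  \partial_1(p_f \otimes \beta_f) = \sigma(f,\intx)\, p_{\intx}\otimes \beta_{\intx} = \pm p_{\intx}\otimes \beta_{\intx},
\]
which is a generator of $K_0(A_0)\cong \mathbb{Z}$. Hence $\partial_1$ is surjective, $i_*$ vanishes, and (2), (3) follow automatically.

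I do not expect any real obstacle here: the heavy lifting has already been done in Theorem \ref{thmdiff1}, and the argument simply reduces the Proposition to the fact that connectedness of $X$ and the presence of at least one boundary hyperface suffice to exhibit an explicit lift of a generator of $K_0(\cK)$ to a class that dies in $K_0(\cK_b(X))$. Conceptually this reflects that the image of $i_*$ is $E^\infty_{0,0}$, which is a quotient of $E^2_{0,0}\simeq H^{\cn}_0(X)$, and $H^{\cn}_0(X)=0$ as soon as $\mathrm{codim}(X)\geq 1$ by the computation in Section \ref{exples-cn}.
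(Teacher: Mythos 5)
Your proposal is correct, and it reaches the conclusion by a different mechanism than the paper. The paper proves statement (2) by induction on the codimension $d$: it shows, via a commutative diagram of short exact sequences built from $A_0\subset A_{d-1}\subset A_d$, that the connecting morphism $K_1(A_d/A_0)\to K_0(A_0)$ factors through the corresponding connecting morphism for $A_{d-1}$, and then descends all the way to the base case $d=1$ (which it treats as immediate, i.e.\ the known manifold-with-boundary computation). You instead prove statement (1) directly and without induction, by observing that $i_*$ factors through $K_0(A_1)$ and that the map $K_0(A_0)\to K_0(A_1)$ already vanishes because $\partial_1=d^1_{1,0}$ hits a generator of $K_0(A_0)\cong\ZZ$ by the explicit formula of Theorem \ref{thmdiff1} (for $p=1$ this is indeed the connecting map of $0\to A_0\to A_1\to A_1/A_0\to 0$, since $A_{-1}=0$, and there is no circularity since Theorem \ref{thmdiff1} precedes and does not use the Proposition). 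Both arguments ultimately rest on the same codimension-one computation; yours makes that dependence explicit and eliminates the induction, at the price of invoking the machinery of Section \ref{subsecsseq}, whereas the paper's diagram chase is self-contained modulo the unproved base case. Your closing remark identifying $\im(i_*)$ with $E^\infty_{0,0}$, a subquotient of $E^2_{0,0}\simeq H^{\cn}_0(X)=0$, is also a valid (if redundant) conceptual check. Your preliminary reduction of the three statements to one another via the six-term sequence is exactly what the paper leaves implicit.
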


\begin{proof}
Let $X$ be a connected manifold with corners of codimension $d$. With the notations of the last section, the sequence \ref{sesbcompact} correspond to the canonical sequence
$$
\xymatrix{
0\ar[r]&A_0\ar[r]&A_d\ar[r]&A_d/A_0\ar[r]&0.
}
$$ 
We will prove that the connecting morphism $K_1(A_d/A_0) \to K_0(A_0)\cong \mathbb{Z}$ is surjective.
The proof will proceed by induction, the case $d=1$ immediately satisfies this property. So let us assume that the connecting morphism $K_1(A_{d-1}/A_0) \to K_0(A_0)$ associated to the short exact sequence 
$$\xymatrix{
0\ar[r]&A_0\ar[r]&A_{d-1}\ar[r]&A_{d-1}/A_0\ar[r]&0.
}$$
is surjective. Consider now the following commutative diagram of short exact sequences
\begin{equation}\label{AdA0}
\xymatrix{
0\ar[r]&0\ar[r]&A_d/A_{d-1}\ar[r]&A_d/A_{d-1}\ar[r]&0\\
0\ar[r]&A_0 \ar[u]\ar[r]&A_d\ar[u]\ar[r]&A_d/A_0\ar[u]\ar[r]&0\\
0\ar[r]&A_0 \ar[u]\ar[r]&A_{d-1}\ar[u]\ar[r]&A_{d-1}/A_0\ar[u]\ar[r]&0.
}
\end{equation}
By applying the six-term short eaxt sequence in K-theory to it we obtain that the following diagram, where $\partial_d$ and 
$\partial_{d-1}$ are the connecting morphisms associated to the middle and to the bottom rows respectively,
\[
\xymatrix{
K_(A_d/A_0)\ar[rd]^-{\partial_d}&\\
K_1(A_{d-1}/A_0)\ar[u]\ar[r]_-{\partial_{d-1}}& K_0(A_0)
}
\]
is commutative. Hence, by inductive hypothesis, we obtain that $\partial_d$ is surjective.
\end{proof}

\begin{remark} 
Roughly speaking, the previous proposition tells us that the analytical index of a fully elliptic element carries no information about its Fredholm index, this information being completely contained in   some element  of 
$K_1(\cK_b(\partial X))$.
\end{remark}

We have next our main K-theoretical computation:

\begin{theorem}\label{thmPCHvsKth}
Let $X$ be a finite product of manifolds with corners of codimension less or equal to three. There are natural isomorphisms 
\begin{equation}
\xymatrix{
H_{\mathrm0}^{\pcn}(X)\otimes_\mathbb{Z}\mathbb{Q}\ar[r]^-{\phi_X}_-\cong & K_0(\cK_b(X)))\otimes_\mathbb{Z}\mathbb{Q}
}
\text{ and }\qquad 
\xymatrix{
H_{\mathrm1}^{\pcn}(X)\otimes_\mathbb{Z}\mathbb{Q}\ar[r]^-{\phi_X}_-\cong & K_1(\cK_b(X))\otimes_\mathbb{Z}\mathbb{Q}.
}
\end{equation}
In the case $X$ contains a factor of codimension at most two or $X$ is of codimension three the result holds even without tensoring by $\mathbb{Q}$.
\end{theorem}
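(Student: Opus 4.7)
The plan is to exploit the $K$-theory spectral sequence $(E^r_{p,q}, d^r)$ associated with the ideal filtration \eqref{eq:K-filt} of $\cK_b(X)$, together with the identification $E^2_{p,q} \simeq H_p^{\cn}(X)$ for $q$ even provided by Corollary \ref{cordiff1}. Since $E^1_{p,q}$ vanishes for odd $q$, only the differentials $d^r$ with $r$ odd can be nonzero, and because $d^r$ has shape $E^r_{p,q} \to E^r_{p-r,q+r-1}$, the length of the filtration severely restricts which higher differentials can contribute.

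First, for a single factor $X$ of codimension $d\le 2$, only $d^1$ can be nonzero, hence $E^\infty = E^2 \simeq H_*^{\cn}(X)$. Since $H_*^{\pcn}(X)$ is free abelian in this range by Theorem \ref{freePCN}, the induced filtration of $K_*(\cK_b(X))$ has no Ext-obstruction and splits, giving the integral isomorphism sorted by parity. For a single factor of codimension three, the only additional differential to control is $d^3 : E^3_{3,q} \to E^3_{0,q+2}$, which under Corollary \ref{cordiff1} takes the shape of a map $H_3^{\cn}(X) \to H_0^{\cn}(X)$ — exactly the shape of the component $\delta^3$ of the periodic differential $\delta^{\pcn}$ in Section \ref{appendix}. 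I would prove $d^3 = \pm\delta^3$ by running the localization procedure used for Theorem \ref{thmdiff1} one level further: reduce to a chain of faces $f\subset \overline{g}\subset \overline{h}$ with $\mathrm{codim}(h)-\mathrm{codim}(f)=3$ via collar neighborhoods, and compose three successive boundary maps modeled on the basic $KK_1$-element $\alpha$ of Lemma \ref{lem:R1-acting-onR+}, tracking signs via Lemma \ref{lem:switching-coordinates-in-Rp-actions-1}. Once $d^3$ is so identified, the spectral sequence converges to periodic conormal homology as announced, and torsion-freeness of $H_1^{\pcn}(X)$ (Remark after Theorem \ref{freePCN}) together with a direct argument for $H_0^{\pcn}(X)$ handles the extensions integrally; over $\QQ$ both issues are automatic.

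For products, the groupoid identification $\Gamma_b(X_1\times X_2) \simeq \Gamma_b(X_1)\times \Gamma_b(X_2)$ yields $\cK_b(X_1\times X_2) \simeq \cK_b(X_1)\otimes \cK_b(X_2)$. As these algebras come from amenable groupoids and so lie in the UCT class, the $C^*$-algebraic K\"unneth formula gives, after tensoring with $\QQ$,
\begin{equation*}
K_*(\cK_b(X))\otimes \QQ \;\simeq\; \bigotimes_i \bigl(K_*(\cK_b(X_i))\otimes \QQ\bigr).
\end{equation*}
Combined with Proposition \ref{KunnethQ} for rational conormal homology and an induction on the number of factors using the single-factor cases, this gives the rational isomorphism for any finite product of codim $\le 3$ factors. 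When one factor has codim $\le 2$, Tor terms vanish on both sides (by torsion-freeness of that factor, Theorem \ref{freePCN}), and Proposition \ref{Kunneth} upgrades the isomorphism to an integral one.

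The main obstacle is the explicit identification of $d^3$ with $\delta^3$ in the codimension-three case. The sign bookkeeping in chaining three instances of the basic connecting map is delicate and must be reconciled with the contraction signs defining $\delta^{\pcn}$ in the appendix; this is what dictates the restriction to codimension three (where $d^3$ is the only higher differential that can occur) and the torsion-freeness hypotheses that allow the extensions to be resolved integrally. Everything else is then a matter of assembling the two K\"unneth formulas.
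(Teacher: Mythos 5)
Your overall architecture (spectral sequence of the filtration \eqref{eq:K-filt} plus the two K\"unneth formulas) is viable and genuinely different from the paper's, but the step you single out as the crux --- identifying $d^3$ by ``composing three successive boundary maps'' along a chain of faces --- is not a correct description of $d^3$ and would fail as stated. The composition of the connecting homomorphisms of successive filtration quotients is exactly the composite $d^1\circ d^1$, which vanishes; the differential $d^3$ on $E^3_{3,q}$ is instead computed by the zig-zag of the exact couple: lift $\partial x\in K_{2+q}(A_2)$ through $K_{2+q}(A_1)$ down to $K_{2+q}(A_0)$ and take the class of that lift. Moreover, even if one could prove $d^3=\pm\delta^3$, the appendix shows $\delta^3=\delta^2\circ\delta^1=\delta^1\circ\delta^2$, so $\delta^3$ induces the \emph{zero} map on $E^2=H_*(C_*(X),\delta^1)$; your target identity is really the assertion $d^3=0$. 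Fortunately the gap is repairable without any computation: for $X$ connected with nonempty boundary, $\delta_1:C_1(X)\to C_0(X)$ is surjective, so $E^2_{0,q}\simeq H_0^{\cn}(X)=0$ for $q$ even (and $E^r_{0,q}=0$ for $q$ odd), hence the target of every $d^3$ emanating from $E^3_{3,q}$ vanishes and $d^3=0$ for degree reasons. With that substitution your single-factor codimension-$3$ argument goes through: $E^\infty=E^2$, $K_0(A_3)$ has graded pieces $0$ and $H_2^{\cn}(X)$ (no extension problem), and $K_1(A_3)$ is an extension of the free groups $H_3^{\cn}(X)$ by $H_1^{\cn}(X)$, which splits.

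For comparison, the paper never touches the higher differentials (it explicitly leaves their computation open): it proves the codimension-$2$ and $3$ cases by comparing the six-term $K$-theory sequences of $0\to A_0\to A_2\to A_2/A_0\to 0$ and $0\to A_2\to A_3\to A_3/A_2\to 0$ with the corresponding long exact sequences in conormal homology, using Corollary \ref{cordiff1}, Proposition \ref{thmbcompact} and the five lemma, with the commutativity of the connecting squares checked via auxiliary diagrams of ideals. That route produces the comparison maps $\phi_X$ explicitly and naturally (which matters for Theorem \ref{thmFPcornercycles}), whereas the spectral-sequence route, once repaired, is shorter on the degeneration side but leaves the naturality of the resulting isomorphism more implicit. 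Your K\"unneth step for products (rational in general, integral when a factor of codimension $\le 2$ supplies torsion-freeness) coincides with the paper's parts 1E and 1F.
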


\begin{proof}

{\bf 1A. $Codim(X)=0$:} The only face of codimension 0 is $\intx$ (we are always assuming $X$ to be connected). The isomorphism 
$$H_0^{\cn}(X_0) \stackrel{\phi_0}{\longrightarrow} K_0(A_0)$$
is simply given by sending $\intx$ to the rank one projector $p_{\intx}$ chosen in section \ref{subsecsseq}.  

{\bf 1B. $Codim(X)=1$:} Consider the canonical short exact sequence
$$
\xymatrix{
0\ar[r]&A_0\ar[r]&A_1\ar[r]&A_1/A_0\ar[r]&0
}
$$
That gives, since $d^1_{1,0}$ is surjective, the following exact sequence in K-theory
$$
\xymatrix{
0\ar[r]&K_1(A_1)\ar[r]&K_1(A_1/A_0)\ar[r]^-{d^1_{1,0}}&K_0(A_0)\ar[r]&0
}
$$
from which $K_1(A_1)\cong \ker d^1_{1,0}$ and $K_0(A_1)=0$ (since $K_0(A_1/A_0)=0$ by a direct computation for K-theory or for conormal homology). By theorem \ref{thmdiff1}, corollary \ref{cordiff1}, we have the following commutative diagram
$$
\xymatrix{
K_1(A_1/A_0)\ar[r]^-{d^1_{1,0}}&K_0(A_0)\\
H_1^{\pcn}(X_1\setminus X_0)\ar[u]^-{\phi_{1,0}}_-{\cong}\ar[r]_-{\delta_1}&H_0^{\pcn}(X_0)\ar[u]^-{\phi_0}_-{\cong}.
}
$$

Then there is a unique natural isomorphism 
$$H_1^{\pcn}(X_1) \stackrel{\phi_1}{\longrightarrow} K_1(A_1),$$
fitting the following commutative diagram
$$
\xymatrix{
0\ar[r]&K_1(A_1)\ar[r]&K_1(A_1/A_0)\ar[r]^-{d^1_{1,0}}&K_0(A_0)\ar[r]&0\\
0\ar[r]&H_1^{\pcn}(X_1)\ar[u]^-{\phi_1}_-{\cong}\ar[r]&H_1^{\pcn}(X_1\setminus X_0)\ar[u]^-{\phi_{1,0}}_-{\cong}\ar[r]^-{\partial_{1,0}}&H_0^{\pcn}(X_0)\ar[u]^-{\phi_0}_-{\cong}\ar[r]&0.
}
$$
{\bf 1C. $Codim(X)=2$:} 
We first proof that we have natural isomorphisms
\begin{equation}\label{eq1-propinductionstep}
\xymatrix{ H_{*}^{\cn}(X_l,X_m) \ar[r]^-{\phi_{l,m}}_-{\cong}& K_*(A_l/A_m)
}
\end{equation}
for every $0\leq m\leq l$ with $l-m=2$ and for every manifold with corners (of any codimension). Indeed, this case can be treated very similar to the above one. Suppose $l$ is even, the odd case is treated in the same way by exchanging $K_0$ by $K_1$ and $H_0$ by $H_1$. By comparing the long exact sequences in conormal homology we have that there exist unique natural isomorphisms $\phi_{l,l-2}^0$ and $\phi_{l,l-2}^1$ making the following diagram commutative
$$
{\tiny
\xymatrix{
0\ar[r]&K_0(A_l/A_{l-2})\ar[r]&K_0(A_l/A_{l-1})\ar[r]^-{d^1_{l,0}}&K_1(A_{l-1}/A_{l-2})\ar[r]&K_1(A_l/A_{l-2})\ar[r]&0\\
0\ar[r]&H_0^{\pcn}(X_l\setminus X_{l-2})\ar[u]^-{\phi_{l,l-2}^0}_-{\cong}\ar[r]&H_0^{\pcn}(X_l\setminus X_{l-1})\ar[u]^-{\phi_{l,l-1}}_-{\cong}\ar[r]^-{\partial_{l,0}}&H_1^{\pcn}(X_{l-1}\setminus X_{l-2})\ar[u]^-{\phi_{l-1,l-2}}_-{\cong}\ar[r]&H_1^{\pcn}(X_l\setminus X_{l-2})\ar[u]^-{\phi_{l,l-2}^1}_-{\cong}\ar[r]&0.
}}
$$
since the diagram in the middle is commutative again by corollary \ref{cordiff1}.

Let us now pass to the case when $codim(X)=2$. Consider the short exact sequence:
\begin{equation}
\xymatrix{
0\ar[r]&A_0\ar[r]&A_2\ar[r]&A_2/A_0\ar[r]&0.
}
\end{equation}

We compare its associated six term short exact sequence in $K$-theory with the one in conormal homology to get
\begin{equation}\label{PCNvsKthdiagcodim2}
{\tiny\xymatrix{
&\mathbb{Z}\ar[rr]&&K^0(A_2)\ar[rr]&&K_0(A_2/A_0)\ar[dd]&\\
H_0(X_0)\ar[rr]\ar[ru]^-{\phi_0}_-{\cong}&&H_0^{\pcn}(X_2)\ar[rr]\ar@{.>}[ru]|-{?_2}&&H_0^{\pcn}(X_2,X_0)\ar[dd]\ar[ru]\ar@{.>}[ru]^-{\phi_{2,0}}_-{\cong}&&\\
&K_1(A_2/A_0)\ar@{.>}[uu]&&K_1(A_2)\ar@{.>}[ll]&&0\ar@{.>}[ll]&\\
H_1^{\pcn}(X_2,X_0)\ar[uu]\ar@{.>}[ru]^-{\phi_{2,0}}_-{\cong}&&H_1^{\pcn}(X_2)\ar@{.>}[ru]|-{?_1}\ar[ll]&&0\ar[ru]\ar[ll]&&
}}
\end{equation}
where we need now to define isomorphisms $?_1$ and $?_2$. In fact if we can define morphims such that the diagrams are commutative then by a simple Five lemma argument they would be isomorphisms. The first thing to check is that 
\begin{equation}\label{phi20diag}
\xymatrix{
K_1(A_2/A_0)\ar[r]^-{d_{2,0}}&K_0(A_0)\cong \mathbb{Z}\\
H_1^{\pcn}(X_2,X_0)\ar[u]^-{\phi_{2,0}}_-{\cong}\ar[r]_-{\partial_{2,0}}& H_0^{\pcn}(X_0)\ar[u]_-{\phi_{0}}^-{\cong}
}
\end{equation}
is commutative. Indeed, this can be seen by considering the following commutative diagram of short exact sequences
\begin{equation}
\xymatrix{
0\ar[r]&0\ar[r]&A_2/A_1\ar[r]&A_2/A_1\ar[r]&0\\
0\ar[r]&A_0\ar[u]\ar[r]&A_2\ar[u]\ar[r]&A_2/A_0\ar[u]\ar[r]&0\\
0\ar[r]&A_0\ar[u]\ar[r]&A_1\ar[u]\ar[r]&A_1/A_0\ar[u]\ar[r]&0,
}
\end{equation}
applying the associated diagram between the short exact sequences that gives that the connecting morphism for the middle row, 
$K_1(A_2/A_0)\stackrel{d_{2,0}}{\longrightarrow}K_0(A_0)$, is given by a (any) splitting of $K_1(A_1/A_0)\to K_1(A_2/A_0)$ (both modules are free $\mathbb{Z}$-modules by theorem \ref{freePCN}) followed by the connecting morphism associated to the exact sequence on the bottom of the above diagram. By definition of $\phi_{2,0}$ in (\ref{eq1-propinductionstep}) above and by corollary \ref{cordiff1} we have that these two last morphisms are compatible with the analogs in the respective conormal homologies, since the connecting morphism $\partial_{2,0}$ in conormal homology is obtained in this way as well we conclude the commutativity of (\ref{phi20diag}).
%%%%%%%%%%%%%%%%%%%%%%%%%%%%%%%%%%%%%%%%%%%%%%%%%%%%%%%%%%%%%%%
We are ready to define $?_1$ and $?_2$. For the first one, $?_1$, there is a unique isomorphism $\phi_2^1$ fitting the following commutative diagram
$$
\xymatrix{
0\ar[r]&K_1(A_2)\ar[r]&K_1(A_2/A_0)\\
0\ar[r]&H_1^{\pcn}(X_2)\ar[u]^-{\phi_2^1}_-{\cong}\ar[r]&H_1^{\pcn}(X_2,X_0)\ar[u]^-{\phi_{2,0}^1}_-{\cong}
}$$
and given by restriction of $\phi_{2,0}^1$ to the image of $H_1^{\pcn}(X_2)\to H_1^{\pcn}(X_2,X_0)$. 
Now, for defining $?_2$ we have by theorem \ref{thmbcompact} an unique isomorphism $\phi_{2}^0$ fitting the following diagram
$$
\xymatrix{
K_0(A_2)\ar[r]^-{\cong}&K_0(A_2/A_0)\\
H_0^{\pcn}(X_2)\ar[u]^-{\phi_{2}^0}_-{\cong}\ar[r]_-{\cong}&H_0^{\pcn}(X_2,X_0)\ar[u]^-{\phi_{2,0}^0}_-{\cong}.
}$$
{\bf 1D. $Codim(X)=3$:} Consider the short exact sequence:
$$
\xymatrix{
0\ar[r]&A_2\ar[r]&A_3\ar[r]&A_3/A_2\ar[r]&0.
}
$$
We compare its associated six term short exact sequence in $K$-theory with the one in conormal homology to get
\begin{equation}\label{PCNvsKthdiagcodim3}
{\tiny\xymatrix{
&K_0(A_2)\ar[rr]&&K_0(A_3)\ar[rr]&&0\ar[dd]&\\
H_0^{\pcn}(X_2)\ar[rr]\ar[ru]^-{\phi_2}_-{\cong}&&H_0^{\pcn}(X_3)\ar[rr]\ar@{.>}[ru]|-{?_2}&&0\ar[dd]\ar[ru]\ar@{.>}[ru]&&\\
&K_1(A_3/A_2)\ar@{.>}[uu]&&K_1(A_3)\ar@{.>}[ll]&&K_1(A_2)\ar@{.>}[ll]&\\
H_1^{\pcn}(X_3,X_2)\ar[uu]\ar@{.>}[ru]^-{\phi_{3,2}}_-{\cong}&&H_1^{\pcn}(X_3)\ar@{.>}[ru]|-{?_1}\ar[ll]&&H_1^{\pcn}(X_2)\ar[ru]^-{\phi_2}_-{\cong}\ar[ll]&&
}}
\end{equation}
where we need now to define isomorphisms $?_1$ and $?_2$. Again, if we can define morphims such that the diagrams are commutative then by a simple Five lemma argument they would be isomorphisms. Let us first check that the diagram
\begin{equation}\label{phi32diag}
\xymatrix{
K_1(A_3/A_2)\ar[r]^-{\partial}&K_0(A_2)\\
H_1^{\pcn}(X_3,X_2)\ar[u]^-{\phi_{3,2}}_-{\cong}\ar[r]_-{\partial}& H_0^{\pcn}(X_2)\ar[u]_-{\phi_2}^-{\cong}
}
\end{equation}
is commutative. For this consider the following commutative diagram of short exact sequences
\begin{equation}
\xymatrix{
0\ar[r]&A_1\ar[r]\ar[d]&A_1\ar[r]\ar[d]&0\ar[d]\ar[r]&0\\
0\ar[r]&A_2\ar[d]\ar[r]&A_3\ar[d]\ar[r]&A_3/A_2\ar[d]\ar[r]&0\\
0\ar[r]&A_2/A_1\ar[r]&A_3/A_1\ar[r]&A_3/A_2\ar[r]&0.
}
\end{equation}
that implies that the connecting morphism $K_1(A_3/A_2)\stackrel{\partial}{\rightarrow}K_0(A_2)$ followed by the morphism $K_0(A_2)\to K_0(A_2/A_1)$ coincides with the connecting morphism $K_1(A_3/A_2)\stackrel{\partial}{\rightarrow}K_0(A_2/A_1)$. Now, the two latter morphisms are compatible with the analogs in conormal homology via the isomorphisms described above and the morphism $K_0(A_2)\to K_0(A_2/A_1)$ is injective (since $K_0(A_1)=0$), hence the commutativity of diagram \ref{phi32diag} above follows. From diagram (\ref{PCNvsKthdiagcodim3}), by passage to the quotient, there is unique isomorphism $\phi_{3}^{0}$ (the one filling $?_2$ in the above diagram) such that 
$$
\xymatrix{
K_0(A_2)\ar[r]&K_0(A_3)\ar[r]&0\\
H_{0}^{\pcn}(X_2)\ar[u]^-{\phi_{2}^{0}}_-{\cong}\ar[r]&H_0^{\pcn}(X_3)\ar[u]^-{\phi_{3}^{0}}_-{\cong}\ar[r]&0
}$$
is commutative. Finally, for defining $?_1$, it is now enough to choose splittings  for the map
$$0\to H_1^{\pcn}(X_2) \to H_1^{\pcn}(X_3),$$
which is possible since $H_1^{\pcn}(X_3)$ is free (see theorem \ref{freePCN} and the remark below it)
and for the map
$$K_1(A_3)\to \im j\to 0,$$
where $j$ is the canonical morphism $j:K_1(A_3)\to K_1(A_3/A_2)$ (remember all the groups $K_*(A_p/A_{p-1})$ are torsion free).
 
{\bf 1E. If $X=\Pi_iX_i$ is a finite product with $codim(X_i)\leq 3$ and with at least one factor of codimension at most 2:} In this case the result would follow, by all the points above, if both, Periodic conormal homology and K-theory, satisfy the K\"unneth formula. Since the algebras $\cK_b(X)$ are nuclear because the groupoids $\Gamma_b(X)$ are amenable we have the K\"unneth formula in K-theory for these kind of algebras. Now, for conormal homology we verified the K\"unneth formula in proposition \ref{Kunneth}.

{\bf 1F. If $X=\Pi_iX_i$ is a finite product with $codim(X_i)\leq 3$, $\forall i$:} In this case the result holds rationnaly by the same arguments as above by using propostion \ref{KunnethQ}.
\end{proof}

\section{Fredholm perturbation properties and Euler conormal characters}

The previous  results yield a criterium for Property $(\cH\cF\cP)$ in terms of the Euler characteristic for conormal homology.
To fit with the assumptions of Theorem \ref{thmPCHvsKth}, we consider a manifold with corners $X$ of codimension $d$, which is given by the cartesian product of manifolds with corners of codimension at most $3$. 

\begin{definition}[Corner characters]\label{cornercharactersintro}
Let $X$ be a manifold with corners. We define the even conormal character of $X$ as 
\begin{equation}
\chi_0(X)=dim_\mathbb{Q}\,H_{0}^{\pcn}(X)\otimes_\mathbb{Z}\mathbb{Q}.
\end{equation}
Similarly, we define the odd conormal character of $X$ as
\begin{equation}
\chi_1(X)=dim_\mathbb{Q}\,H_{1}^{\pcn}(X)\otimes_\mathbb{Z}\mathbb{Q}.
\end{equation}
\end{definition}

We can consider as well
\begin{equation}
\chi(X)=\chi_0(X)-\chi_1(X),
\end{equation}	
then we have (by the rank nullity theorem)
\begin{equation}
\chi(X)=1-\# F_1+\# F_2-\cdots +(-1)^d\#F_d.
\end{equation}
We refer to the integer $\chi(X)$ as the Euler corner character of $X$. These numbers are clearly invariant under the natural notion of isomorphism of manifolds with corners. Their computation is elementary in any concrete situation.

In particular one can rewrite the theorem above to have, for $X$ as in the statement,

\begin{equation}\label{tableKthprodintro}
{\large 
\xymatrix{
&K_0(\cK_b(X))\otimes_\mathbb{Z}\mathbb{Q}\cong\mathbb{Q}^{\chi_0(X)}&\\
&K_1(\cK_b(X))\otimes_\mathbb{Z}\mathbb{Q}\cong\mathbb{Q}^{\chi_1(X)}&
}
}
\end{equation}
and, in terms of the corner character,
\begin{equation}
{\large \chi(X)=rank(K_0(\cK_b(X))\otimes_\mathbb{Z}\mathbb{Q})-rank(K_1(\cK_b(X))\otimes_\mathbb{Z}\mathbb{Q}).}
\end{equation}

In the case $X$ is a finite product of manifolds with corners of codimension at most 2  we even have
 \begin{equation}\label{tableKth}
K_0(\cK_b(X))\simeq \mathbb{Z}^{\chi_0(X)} \qquad \text{ and } \qquad 
  K_1(\cK_b(X))\simeq \mathbb{Z}^{\chi_1(X)}
\end{equation}
and also 
\(
 \chi_\cn(X)=\mathrm{rk}(K_0(\cK_b(X)))-\mathrm{rk}(K_1(\cK_b(X)))
\).

\vspace{2mm}
 
We  end with the  characterization of Property $(\cH\cF\cP)$ in terms of  conormal characteristics.    
\begin{theorem}\label{thmFPcornercycles}
Let $X$ be a compact connected manifold with corners of codimension greater or equal to one. If $X$ is a finite product of manifolds with corners of codimension less or equal to three we have that 
\begin{enumerate}
\item If $X$ satisfies the Fredholm Perturbation property then the even Euler corner character of $X$ vanishes, {\it i.e. $\chi_0(X)=0$}.
\item If the even Periodic conormal homology group vanishes, {\it i.e. $H_0^{\pcn}(X)=0$} then $X$ satisfies the stably homotopic Fredholm Perturbation property.
\item If $H_0^{\pcn}(X)$ is torsion free and if the even Euler corner character of $X$ vanishes, {\it i.e. $\chi_0(X)=0$} then $X$ satisfies the stably homotopic Fredholm Perturbation property.
\end{enumerate}
\end{theorem}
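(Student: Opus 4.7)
The three parts of the theorem are all consequences of the triangle of results already established: Theorem \ref{AnavsFredthm1intro} (HFP is equivalent to the vanishing of $Ind_\partial^X$), Proposition \ref{thmbcompact} ($r_b$ is an isomorphism on $K_0$), and Theorem \ref{thmPCHvsKth} (identification of $K_*(\cK_b(X))$ with $H_*^{\pcn}(X)$, rationally in general and integrally when a factor of codimension $\le 2$ is present or when $X$ is of codimension $3$).

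For Parts (2) and (3), I would argue ``forward''. In Part (3), the hypotheses $H_0^{\pcn}(X)$ torsion-free and $\chi_0(X)=0$ force $H_0^{\pcn}(X)=0$ as an abelian group (a torsion-free group of vanishing rank is zero), which is precisely the hypothesis of Part (2). In Part (2), the vanishing $H_0^{\pcn}(X)=0$ combined with Theorem \ref{thmPCHvsKth} (in the integral form appropriate to the product structure, or after passing to the torsion-free graded pieces of the relevant filtration) gives $K_0(\cK_b(X))=0$. By Proposition \ref{thmbcompact} this is equivalent to $K_0(\cK_b(\partial X))=0$, so $Ind_\partial^X$ lands in the zero group and is identically $0$; by Theorem \ref{AnavsFredthm1intro}, every elliptic $b$-operator on $X$ satisfies $(\cH\cF\cP)$.

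For Part (1), I would argue ``backward''. FP implies $(\cH\cF\cP)$ for every elliptic $D$, so Theorem \ref{AnavsFredthm1intro} gives $Ind_\partial^X([\sigma_b(D)])=0$ for every elliptic $D$; since every class in $K^0_{top}({}^bT^*X)$ is $[\sigma_b(D)]$ for some elliptic $D$, this means $Ind_\partial^X\equiv 0$ as a homomorphism. Using $Ind_\partial^X=r_b\circ Ind_a^X$ together with Proposition \ref{thmbcompact}, we obtain $Ind_a^X\equiv 0$ into $K_0(\cK_b(X))$. The final step is to observe that the explicit isomorphism $\phi_X$ of Theorem \ref{thmPCHvsKth} is built, at each page of the spectral sequence, from generators $p_f\otimes\beta_f$ which are analytic indices of $b$-elliptic symbols on $X$ obtained by extending Bott-type symbols on the faces via the tangent-groupoid deformation of $\Gamma_b(X)$. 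This implies that $Ind_a^X\otimes\mathbb{Q}$ is surjective onto $K_0(\cK_b(X))\otimes\mathbb{Q}$, so $Ind_a^X=0$ forces $K_0(\cK_b(X))\otimes\mathbb{Q}=0$. Applying Theorem \ref{thmPCHvsKth} again converts this back into $H_0^{\pcn}(X)\otimes\mathbb{Q}=0$, i.e., $\chi_0(X)=0$.

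The main obstacle lies in Part (1), in the rational surjectivity of $Ind_a^X$. Parts (2) and (3) are essentially formal once Theorem \ref{thmPCHvsKth} and Proposition \ref{thmbcompact} are in hand, since one only needs to know that the receptacle of $Ind_\partial^X$ vanishes. In contrast, Part (1) turns the argument around and requires that the $K_0$-generators coming from the conormal homology classes are not merely formal elements but are genuinely hit by analytic indices of symbols defined on all of $X$; this compatibility between the filtration of $\cK_b(X)$ used in Section \ref{secK_b} and the tangent groupoid realization of $Ind_a^X$ is the delicate point to verify.
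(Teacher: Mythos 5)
Your treatment of parts (2) and (3) is correct and coincides with the paper's: part (3) reduces to part (2) because a torsion-free abelian group of rank $\chi_0(X)=0$ is the zero group, and part (2) follows by combining Theorem \ref{thmPCHvsKth} with Proposition \ref{thmbcompact} and Theorem \ref{AnavsFredthm1}, since the receptacle of $Ind^\partial_X$ then vanishes and $(\cH\cF\cP)$ holds for trivial reasons.

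Part (1), however, contains a genuine gap, and it is exactly at the point you flag yourself. You reduce the statement to the claim that $Ind^a_X\otimes\mathbb{Q}$ is \emph{surjective} onto $K_0(\cK_b(X))\otimes\mathbb{Q}$, justified by asserting that the generators $p_f\otimes\beta_f$ entering the construction of $\phi_X$ are analytic indices of $b$-elliptic symbols on $X$. This is established nowhere and is not a routine verification: the classes $p_f\otimes\beta_f$ live in the subquotients $K_p(A_p/A_{p-1})$ of the filtration of $\cK_b(X)$ by the ideals $A_p$, not in $K_0(\cK_b(X))$ itself, and producing lifts to $K_0(\cK_b(X))$ that moreover lie in the image of $Ind^a_X$ is precisely what would have to be proved. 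Nor is surjectivity of $Ind^a_X=r_1\circ r_0^{-1}$ automatic from the tangent groupoid: the kernel of the evaluation at $t=1$ on $C^*(\Gamma_b^{tan}(X))$ is $C^*(\Gamma_b^{tan}(X)\vert_{X\times[0,1)})$, which is not $K$-contractible, so $r_1$ has no a priori reason to be onto. The paper argues in the opposite direction: assuming $\chi_0(X)\neq 0$, it shows that $Ind^a_X\otimes\mathbb{Q}$ is \emph{not the zero map} (rather than surjective) by embedding $X$ into a Monthubert--Nistor classifying space $Y$, for which $Ind^a_Y$ and $i_*:K_0(\cK_b(X))\otimes\mathbb{Q}\to K_0(\cK_b(Y))\otimes\mathbb{Q}$ are isomorphisms and $i!$ is a Thom isomorphism followed by extension by zero; a nonzero value of $Ind^a_X$, transported through the isomorphism of Proposition \ref{thmbcompact}, yields an elliptic operator violating $(\cH\cF\cP)$, hence $(\cF\cP)$, by Theorem \ref{AnavsFredthm1}. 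To repair your argument you would either need to prove the surjectivity claim (which is strictly stronger than anything the paper establishes) or replace it by the classifying-space argument.
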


\bigskip \noindent \begin{proof}
\begin{enumerate}
\item Suppose $\chi_0(X)\neq 0$ then $K_0(\cK_b(X))\otimes_\mathbb{Z}\mathbb{Q}\cong\mathbb{Q}^{\chi_0(X)}$ is not the zero group. By theorem \ref{AnavsFredthm1} it is enough to prove that the rationalized analytic indicial index morphism
$$Ind_a:K^0_{top}(^bT^*X)\otimes_\mathbb{Z}\mathbb{Q}\longrightarrow K_0(\cK_b(X))\otimes_\mathbb{Z}\mathbb{Q}$$
is not the zero morphism. In \cite{MontNis} (theorem 12, 13 and proposition 7 in ref.cit.), Monthubert and Nistor construct a manifold with corners $Y$and a closed embedding of manifolds with embedded corners $X\stackrel{i}{\longrightarrow} Y$ to obtain a commutative diagram
\begin{equation}
\xymatrix{
K^0_{top}(^bT^*X)\otimes_\mathbb{Z}\mathbb{Q}\ar[d]_-{i!}\ar[r]^-{Ind_a}&K_0(\cK_b(X))\otimes_\mathbb{Z}\mathbb{Q}\ar[d]_-{\cong}^-{i_*}\\
K^0_{top}(^bT^*Y)\otimes_\mathbb{Z}\mathbb{Q}\ar[r]_-{Ind_a}^-{\cong}&K_0(\cK_b(Y))\otimes_\mathbb{Z}\mathbb{Q}.
}
\end{equation}
They call such a $Y$ a classifying space of $X$. For our purposes it would be then enough to show that the morphism $$i!:K^0_{top}(^bT^*X)\otimes_\mathbb{Z}\mathbb{Q}\longrightarrow K^0_{top}(^bT^*Y)\otimes_\mathbb{Z}\mathbb{Q}$$
is not the zero morphism. But now we are at the topological K-theory level (with compact supports) where classic topological arguments apply to get that the morphism above is not the zero morphism. Indeed, for construct $i!$ one uses a tubular neighborhood (which exist in this setting, see for example Douady \cite{Dou}), the first step is then a Thom isomorphism followed by a morphism induced by a classic extension by zero. This is summarized in proposition 5 in \cite{MontNis}. The conclusion follows.
\item If $H_{\mathrm0}^{\mathrm{pcn}}(X)=0$ then $H_{\mathrm0}^{\mathrm{pcn}}(X)\otimes_\mathbb{Z}\mathbb{Q}=0$ and the result follows from theorems \ref{thmPCHvsKth} and \ref{AnavsFredthm1}.
\item In this case $K_0(\cK_b(X))\cong \mathbb{Z}^{\chi_0(X)}$ by theorem \ref{thmPCHvsKth} and the arguments applied in the last two points identically apply to get the result (the results of Monthubert-Nistor cited above hold over $\mathbb{Z}$).
\end{enumerate}
\end{proof}

 \section{Appendix: more on conormal homology}\label{appendix}
 
We reproduce the discussion leading to the definition of the conormal differential in a slightly more general way. We keep the same notations.  Let $f\in F_p$,  $\epsilon_f$ an orientation of  $N(f)$ and  $g\in F_{p - k}$  such that $f\subset \overline{g}$.  The face $f$ is  characterized in $\overline{g}$ by the vanishing of $k$ defining functions  and we denote by $(g,f)$ the corresponding $k$-uple of their indices. Then the contraction  $\epsilon_g := e_{(g,f)}\lrcorner \epsilon_f $  is an orientation of $N(g)$. Recall that:
\begin{equation}
  e_{J}\lrcorner \cdot =  e_{j_1}\lrcorner( \ldots \lrcorner (e_{j_k} \lrcorner \cdot)\ldots).
\end{equation}    
For any integers $0\le k\le p$,  we define  $\delta^{k}_p: C_p(X)\to C_{p-k}(X)$ by 
\begin{equation}\label{diffpcn1}
   \delta^{k}_p(f\otimes \varepsilon_f) = \sum_{\substack{g\in F_{p-k},  \\ f\subset\overline{g}}} g\otimes e_{(g,f)}\lrcorner\varepsilon_f.
\end{equation}
We get a homomorphism $\delta^{\pcn} : \cC(X)\to \cC(X)$ of degree $1$ with respect to the $\ZZ_2$-grading by setting: 
\begin{equation}
 \delta^{\pcn}_i = \sum_{\substack{k\ge 0, \\ p\equiv i\, \mathrm{mod}\, 2}} \delta^{2k+1}_p,\qquad i=0,1. 
\end{equation}
 \begin{proposition}\label{propdeltadelta=0}
The map $\delta^{\pcn}$ is a differential, that is $\delta^{\pcn}\circ \delta^{\pcn} =0$.
\end{proposition}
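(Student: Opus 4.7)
The plan is to verify $(\delta^{\pcn})^2=0$ by checking, for each pair of faces $f\in F_p$ and $h\in F_{p-2n}$ with $n\ge 1$ and $f\subset\overline{h}$, that the coefficient of $h$ in $(\delta^{\pcn})^2(f\otimes\epsilon_f)$ vanishes. Since $\delta^{\pcn}=\sum_{k\ge 0}\delta^{2k+1}$, every summand of the composition lowers codimension by the even number $2(j+k+1)$, so $(\delta^{\pcn})^2$ decomposes over the possible targets $h$. Writing $f\in c(I)$ and $h\in c(K)$ one has $K\subset I$; set $L:=I\setminus K$, so $|L|=2n$. For each $k\in\{0,\dots,n-1\}$ the summand $\delta^{2(n-1-k)+1}\circ\delta^{2k+1}$ visits intermediate faces $g\in c(J)$ with $K\subset J\subset I$ and $|I\setminus J|=2k+1$, parametrized by subsets $A:=I\setminus J\subset L$ of odd size (so $B:=L\setminus A=J\setminus K$ is also of odd size).

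The second step is to observe that for each such $J$ there is a \emph{unique} intermediate face $g\in c(J)$ with $f\subset\overline{g}$ (and automatically $g\subset\overline{h}$), by examining the local model $X\cong[0,\infty)^{|I|}\times\RR^{N-|I|}$ at $f$: the open codimension-$|J|$ stratum defined by $x_J=0$ has exactly one connected component whose closure meets $f$. With this identification the contribution of the pair $(A,B)$ to the coefficient of $h$ is the iterated contraction $e_B\lrcorner(e_A\lrcorner\epsilon_f)$. The key algebraic step will be the shuffle identity
\[
 e_B\lrcorner(e_A\lrcorner\epsilon_f)=\sigma(A,B)\,(e_L\lrcorner\epsilon_f),
\]
where $\sigma(A,B)$ denotes the sign of the shuffle that interleaves the ordered sets $A$ and $B$ into the natural order on $L$; this follows from the standard Grassmann formula $\iota_{u\wedge v}=\iota_v\iota_u$ applied iteratively together with $e_A\wedge e_B=\sigma(A,B)\,e_L$. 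Since $e_L\lrcorner\epsilon_f$ is a fixed orientation of $N(h)$ independent of the partition, the coefficient of $h$ in $(\delta^{\pcn})^2(f\otimes\epsilon_f)$ reduces to $S(L)\cdot(e_L\lrcorner\epsilon_f)$, where
\[
 S(L):=\sum_{\substack{A\subset L\\|A|\ \mathrm{odd}}}\sigma(A,L\setminus A).
\]

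The final step is the elementary combinatorial lemma $S(L)=0$ for $|L|=2n\ge 2$. I would prove it by the fixed-point-free involution $A\mapsto L\setminus A$ on the odd-sized subsets of the nonempty set $L$, combined with the universal block-swap identity $\sigma(B,A)=(-1)^{|A|\cdot|B|}\sigma(A,B)$: since $|A|$ and $|L\setminus A|$ are both odd, $(-1)^{|A|\cdot|L\setminus A|}=-1$, so $\sigma(A,L\setminus A)+\sigma(L\setminus A,A)=0$, and the contributions cancel in pairs. The main obstacle I anticipate is matching the paper's iterated-contraction convention from the definition of $\delta^{2k+1}$ with the standard Grassmann/shuffle formalism; however, only the block-swap symmetry of $\sigma$ is ultimately used, and this is a universal feature of the interior product, so the conclusion is insensitive to the precise sign convention.
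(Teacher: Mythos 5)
Your proof is correct and follows essentially the same route as the paper: both arguments fix the source face $f$ and the target face $h$, pair each intermediate face (indexed by an odd subset $A$ of $L=I\setminus K$) with the complementary one (indexed by $L\setminus A$), and cancel the two contributions using the anticommutativity of contractions by odd-length index tuples. Your packaging via shuffle signs and the explicit fixed-point-free involution $A\mapsto L\setminus A$ is only cosmetically different from the paper's pairing $g\leftrightarrow g'$ (and in fact treats the case $|A|=|L\setminus A|$ a bit more transparently than the paper's reorganization of the sum by $\#(g,f)<\#(h,g)$).
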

\begin{proof}
Let $f\in F_p(X)$ and $\epsilon$ be an orientation of $N(f)$. We have
 \begin{equation}\label{pf:propdeltadelta=0-1}
 \delta^{\pcn}(\delta^{\pcn}(f\otimes \epsilon)) =
 \sum_{\substack{g,h\text{ s.t. } \overline{h} \supset\overline{g} \supset  f   \\ (g,f),(h,g)  \text{ are odd} }}\left(  h\otimes e_{(h,g)}\lrcorner(e_{(g,f)}\lrcorner\varepsilon)\right).
\end{equation}
Let $g,h$ providing a term in the sum above and denote  by $I,J,K$ the uples labelling the defining functions of $f,g,h$ respectively. Then set 
\begin{equation}
 J'=I\setminus (h,g).
\end{equation}
By  definition of manifolds with (embedded) corners, $H_{J'}$ is not empty and there exists a unique face $g'\in c(J')$ with $f\subset\overline{g'}$. This face $g'=\iota(g,h,f)$ satisfies the following properties:
\begin{itemize}
 \item  $f\subset \overline{g'}\subset \overline{h}$,
 \item  $(g',f)=(h,g)$ and $(h,g')=(g,f)$ are odd,
 \item  $\iota(g',h,f)=g$.
\end{itemize}
Finally, note that $\#(g,f)\not=\#(h,g)$, otherwise we would have $(h,f)=(h,g)+(g,f)$ even. This implies in particular that $g\not=g'$. These observations allow to reorganize the sum \eqref{pf:propdeltadelta=0-1} as follow:
 \begin{eqnarray*}
 \delta^{\pcn}(\delta^{\pcn}(f\otimes \epsilon)) &=& 
 \sum_{\substack{g,h\text{ s.t. } \overline{h} \supset\overline{g} \supset  f   \\ \#(g,f)<\#(h,g)  \text{ odd} }}\left(  h\otimes (e_{(h,g)}\lrcorner(e_{(g,f)}\lrcorner\varepsilon+ e_{(h,g')}\lrcorner(e_{(g',f)}\lrcorner\varepsilon))\right).
\end{eqnarray*}
Now 
\begin{eqnarray*}
 e_{(h,g)}\lrcorner(e_{(g,f)}\lrcorner\varepsilon)+ e_{(h,g')}\lrcorner(e_{(g',f)}\lrcorner\varepsilon) 
 &=& e_{(h,g)}\lrcorner(e_{(g,f)}\lrcorner\varepsilon)+ e_{(g,f)}\lrcorner(e_{(h,g)}\lrcorner\varepsilon)
 = 0 
\end{eqnarray*}
since $\#(g,f)$ and $\#(h,g)$ are odd. 
\end{proof}

\bigskip \noindent Proposition \ref{propdeltadelta=0} implies $\delta^1_{p-1}\circ\delta^1_p=0$ for any $p$. Since $\delta^1_*=\delta_*$, this proves the claim of Paragraph \ref{subsec:def-cn}. Moreover:
\begin{proposition}\label{prop:contracting-delta}
 The identity map $(\cC_*(X),\delta^1)\longrightarrow (\cC_*(X),\delta) $  induces an isomorphism between the $\ZZ_2$-graded homology groups. 
\end{proposition}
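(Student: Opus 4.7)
I read the statement as asserting that the complexes $(\cC_*(X),\delta^{\pcn})$ and $(\cC_*(X),\delta)$ have canonically isomorphic $\ZZ_2$-graded homology groups, realised by a perturbation-theoretic chain equivalence. My approach is a spectral-sequence argument built on the codimension filtration.

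The plan is first to introduce the bounded filtration $F_p\cC_*(X) = \bigoplus_{k \le p} C_k(X)$, $0\le p \le d$, which is preserved by both $\delta$ and $\delta^{\pcn}$ because every component $\delta^{2k+1}$ strictly decreases the codimension by $2k+1 \geq 1$. In the associated spectral sequence of the $\ZZ_2$-graded complex $(\cC_*(X), \delta^{\pcn})$, the $E^0$ differential vanishes since no piece of $\delta^{\pcn}$ preserves the filtration; on $E^1 = \cC_*(X)$ the induced differential $d^1$ is the unique summand of $\delta^{\pcn}$ decreasing the filtration by exactly one, namely $\delta^1 = \delta$; hence $E^2 = H^{\cn}_*(X)$ with its natural $\ZZ_2$-grading by parity. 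By parity again, all even-indexed differentials $d^{2r}$ vanish automatically, so the whole task reduces to showing that $d^{2r+1}=0$ for every $r\ge 1$.

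For the vanishing of these odd-indexed differentials, the key input will be the codimension-graded refinement
\[
\sum_{j+k=n}\delta^{2j+1}\delta^{2k+1} = 0 \qquad \text{for every integer } n\ge 0
\]
of the identity $(\delta^{\pcn})^2=0$ proved in Proposition~\ref{propdeltadelta=0}. This refinement holds because the partner-face pairing $\iota(g,h,f)$ used there cancels terms of a fixed total codimension drop $2n+2$ without mixing different values of $n$, the parity of $\#(g,f)$ and $\#(h,g)$ being tracked separately. The identity means that $\delta_\lambda := \sum_{k\ge 0}\lambda^k\delta^{2k+1}$ is a genuine differential for every $\lambda$, interpolating $\delta$ at $\lambda = 0$ with $\delta^{\pcn}$ at $\lambda = 1$; in the filtration picture $\delta^3+\delta^5+\cdots$ is a perturbation of $\delta$ of strictly lower filtration degree. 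I would then apply the basic perturbation lemma to a strong deformation retract of $(\cC_*(X),\delta)$ onto $H^{\cn}_*(X)$ (rationally, just pick splittings $C_p = Z_p \oplus W_p$ and $Z_p = B_p\oplus \mathcal{H}_p$): the transferred differential on $H^{\cn}_*(X)$ is computed order by order from the $\delta^{2k+1}$ and, using $\delta_\lambda^2=0$, is seen to vanish identically. This forces the spectral sequence to collapse at $E^2$, and the resulting isomorphism $H^{\pcn}_*(X)\simeq H^{\cn}_*(X)$ is canonical since the perturbation lemma produces a natural chain equivalence.

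The main obstacle will be upgrading the argument from $\QQ$-coefficients to $\ZZ$-coefficients, since a strong deformation retract of $(\cC_*(X),\delta)$ onto $H^{\cn}_*(X)$ is not always available integrally. I expect two ways around this. Either one invokes the torsion-freeness of $H^{\cn}_*(X)$ in the low codimension cases established in Theorem~\ref{freePCN} to conclude integrally in the situations needed for the paper's main theorems; or one builds the chain-level equivalence explicitly by induction on codimension, the inductive step being provided precisely by the combinatorial pairing $\iota(g,h,f)$ from Proposition~\ref{propdeltadelta=0}, which is the combinatorial shadow of the vanishing of the higher differentials.
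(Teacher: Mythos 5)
Your spectral-sequence setup is fine (the filtration is preserved, $E^1=\cC_*(X)$ with $d^1=\delta^1$, $E^2=H^{\cn}_*(X)$, even differentials vanish by parity), and your graded refinement $\sum_{j+k=n}\delta^{2j+1}\delta^{2k+1}=0$ is a correct consequence of the pairing $\iota(g,h,f)$, since the cancellation there takes place within a fixed total codimension drop. The genuine gap is the assertion that, ``using $\delta_\lambda^2=0$,'' the transferred differential on $H^{\cn}_*(X)$ \emph{is seen to vanish identically}. That identity is far too weak to force this. Already the first-order term of the transferred differential is the map induced by $\delta^3$ on $\delta^1$-homology, and its vanishing requires $\delta^3(\ker\delta^1)\subseteq\im\delta^1$, which does not follow from $\delta^1\delta^3+\delta^3\delta^1=0$ (an identity that is even vacuous for degree reasons in low codimension). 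Concretely, your implication fails in the abstract: take $C_p=\ZZ$ for $0\le p\le 3$, $\delta^1=0$ and $\delta^3=\mathrm{id}:C_3\to C_0$; then $\delta_\lambda^2=0$ for every $\lambda$, yet the $\ZZ_2$-graded homologies of $\delta^1$ and of $\delta^1+\delta^3$ differ. So some structural input specific to $\delta^{\pcn}$, beyond the flatness of the family $\delta_\lambda$, is indispensable.

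The missing input is the factorization $\delta^{2k+1}=\delta^{2k}\circ\delta^1=\delta^1\circ\delta^{2k}$, where $\delta^{2k}$ is the analogous even contraction; this is the key lemma of the paper's proof and you do not isolate it. Once it is available, your whole apparatus becomes unnecessary: setting $h=\sum_{k\ge0}\delta^{2k}=\Id+N$ with $N$ nilpotent (it strictly lowers codimension), one gets $\delta^{\pcn}=h\circ\delta^1=\delta^1\circ h$ with $h$ invertible over $\ZZ$, hence $\ker\delta^{\pcn}=\ker\delta^1$ and $\im\delta^{\pcn}=\im\delta^1$ on the nose, and the two homology groups are literally equal. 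In particular, your concluding worry about upgrading from $\QQ$ to $\ZZ$ and the appeal to Theorem \ref{freePCN} are red herrings: the result holds integrally in arbitrary codimension. (If you insist on the perturbation-lemma route, the factorization is again what saves it: every term of the transferred differential ends in $\delta^{2k+1}\circ i=\delta^{2k}\circ\delta^1\circ i=0$ on $\delta^1$-cycles.)
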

 \begin{lemma}
 The following equality hold for any $k\ge 0$:
 \begin{equation}
   \delta^{2k+1} = \delta^{2k}\circ \delta^1 = \delta^1\circ \delta^{2k}
   \end{equation}
\end{lemma}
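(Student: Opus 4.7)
The plan is to fix a generator $f\otimes\epsilon$ with $f\in F_p$ and to reorganize the double sums defining $\delta^{2k}\circ\delta^1(f\otimes\epsilon)$ and $\delta^1\circ\delta^{2k}(f\otimes\epsilon)$ by grouping them according to the terminal face $g\in F_{p-2k-1}$. For each such $g$ satisfying $f\subset\overline{g}$, the standard existence-uniqueness property of manifolds with embedded corners -- the same one invoked in the proof of Proposition~\ref{propdeltadelta=0} -- ensures that the admissible intermediate faces $h$ are in bijection with the indices $i\in(g,f)$. In the composition $\delta^{2k}\circ\delta^1$, such an $h$ is determined by the requirement $(h,f)=(i)$, which forces $(g,h)=(g,f)\setminus\{i\}$; in $\delta^1\circ\delta^{2k}$, it is determined by $(g,h)=(i)$, which forces $(h,f)=(g,f)\setminus\{i\}$. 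So after this regrouping, proving both equalities reduces to showing, for every such $g$ and every orientation $\epsilon$:
\begin{equation}
\sum_{i\in(g,f)} e_{(g,f)\setminus\{i\}}\lrcorner e_i\lrcorner\epsilon \;=\; \sum_{i\in(g,f)} e_i\lrcorner e_{(g,f)\setminus\{i\}}\lrcorner\epsilon \;=\; e_{(g,f)}\lrcorner\epsilon.
\end{equation}

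The key ingredient will be the graded anticommutativity of contraction operators: $e_i\lrcorner(e_j\lrcorner\omega) = -e_j\lrcorner(e_i\lrcorner\omega)$ whenever $i\neq j$. Writing $(g,f)=(i_1,\dots,i_{2k+1})$ in increasing order and calling $h_j$ the intermediate face associated with the index $i_j$, I would move the factor $e_{i_j}$ to its natural position in the fully expanded product $e_{i_1}\lrcorner\cdots\lrcorner e_{i_{2k+1}}\lrcorner\epsilon = e_{(g,f)}\lrcorner\epsilon$. In the first composition this requires crossing $2k+1-j$ factors and produces a sign $(-1)^{2k+1-j}$; in the second it requires crossing $j-1$ factors and produces a sign $(-1)^{j-1}$. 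Hence each summand equals $\pm e_{(g,f)}\lrcorner\epsilon$, and both sums collapse to
\begin{equation}
\Bigl(\sum_{j=1}^{2k+1}(-1)^{j-1}\Bigr)\,e_{(g,f)}\lrcorner\epsilon \;=\; \Bigl(\sum_{j=1}^{2k+1}(-1)^{2k+1-j}\Bigr)\,e_{(g,f)}\lrcorner\epsilon \;=\; e_{(g,f)}\lrcorner\epsilon,
\end{equation}
the last equality holding precisely because the length $2k+1$ is odd.

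This parity is exactly the same phenomenon that makes $\delta^{\pcn}\circ\delta^{\pcn}=0$ in Proposition~\ref{propdeltadelta=0}: the restriction to odd jumps in codimension is what renders every sign computation compatible. The argument is structurally elementary once the regrouping by $g$ is in place; the only delicate point, and thus the main obstacle, is keeping the sign conventions consistent between the definition of the iterated contraction $e_J\lrcorner\cdot$ (which contracts the rightmost index first) and the chosen increasing ordering of $(g,f)$ when decomposing it as $(g,h_j)\sqcup(h_j,f)$ or as $(h_j,f)\sqcup(g,h_j)$. Once this bookkeeping is done carefully on one side, the other equality follows verbatim by the symmetric computation.
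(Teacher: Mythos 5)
Your argument is correct and matches the paper's own proof in all essentials: both regroup the double sum by the terminal face $g$ (equivalently, by the uple $J=(g,f)$ of length $2k+1$), identify the intermediate faces with the indices $i\in(g,f)$ via the existence–uniqueness property of embedded corners, and reduce the identity to the sign computation $\sum_{j=1}^{2k+1}(-1)^{j-1}=\sum_{j=1}^{2k+1}(-1)^{2k+1-j}=1$, which holds precisely because the jump $2k+1$ is odd. No gaps; the bookkeeping of the contraction convention that you flag as the delicate point is handled the same way in the paper.
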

\begin{proof}[of the lemma]
 Let $f$ be a codimension $p$ face and $\epsilon$ an orientation of $N(f)$.
 Let $I$ be the $p$-uple defining $f$. Then $g$ is a face such that $f\subset \overline{g}$ if and only if  $g$  is a connected component of $H_J$ for some $J\subset I$. Since the definition of $\delta(f)$ only involves faces $g$ with $f\subset\overline{g}$, it is no restriction to remove the connected component of $H_J$ disjoint from $f$ for any $J\subset I$, or equivalently to assume that such $H_J$ are connected. It follows that the faces appearing in the definition of $\delta(f)$ are in one-to-one correspondence with the uples $J\subset I$ so they can be indexed by them and eventually omitted in the sum defining  $\delta^*(f)$.  It follows that, $\epsilon_I$ denoting an orientation of $N(f)$,
\begin{eqnarray*}
 \delta^{2k}\circ \delta^1(\epsilon_I) &=& \sum_{|J|=2k}\sum_{1\le i\le N}  e_J\lrcorner e_i \lrcorner\epsilon_I
  = \sum_{|J|=2k+1}\sum_{l=1}^{2k+1} e_{j_1}\lrcorner\cdots \widehat{e_{j_l}}\lrcorner\cdots \lrcorner e_{j_{2k+1}}\lrcorner e_{j_l}\lrcorner \epsilon_I  \\
   &=&  \sum_{|J|=2k+1}\sum_{l=1}^{2k+1} (-1)^{l-1}e_J\lrcorner\epsilon_I 
   =  \sum_{|J|=2k+1}e_J\lrcorner\epsilon_I=\delta^{2k+1}(\epsilon_I).
\end{eqnarray*}
The equality $\delta^{2k+1} = \delta^1\circ \delta^{2k}$ is obtained in the same way.
\end{proof}

\begin{proof}[of Proposition \ref{prop:contracting-delta}]
Let us set $N = \sum_{k\ge 0} \delta^{2k}$ and  $h=\Id +N$. Using the lemma, we get: 
\begin{equation}
    \delta^{\pcn} = \delta^1\circ h=h\circ \delta^1. 
\end{equation}
Since $N$ is   nilpotent, the map $h$ is invertible with inverse given by the finite sum
\[
 h^{-1}=\sum_{j\ge 0}(-1)^jN^j.
\]
This proves that $\delta^1(x)=0$ if and only if $\delta^{\pcn}(x)=0$ and that $x=\delta^1(y)$ if and only if $x=\delta^{\pcn}(y')$ for some $y,y'$ as well. The proposition follows.
\end{proof}

\bigskip \noindent The differential $\delta^1$ is of course much simpler to handle than $\delta^{\pcn}$.

\bibliographystyle{plain} 
\bibliography{CornersAindex} 

\end{document}